\documentclass[12pt]{article}
\usepackage{amsmath,amsfonts}
\usepackage[T1]{fontenc}
\allowdisplaybreaks[4]
\usepackage{amssymb}
\usepackage{verbatim}
\usepackage{latexsym}
\usepackage{graphicx}
\usepackage{float}
\usepackage{subfigure}
\usepackage{color,xcolor}
\usepackage{slashed}
\usepackage[pdftex,bookmarksnumbered,bookmarksopen,colorlinks,citecolor=blue,linkcolor=blue]{hyperref}
\usepackage{bm}
\usepackage{appendix}
\usepackage[nottoc]{tocbibind}
\usepackage{hyperref} 
\hypersetup{hidelinks,
	colorlinks=true,
	linkcolor=blue,
	pdfstartview=Fit,
	breaklinks=true
}

\makeatletter
\@addtoreset{equation}{section}

\begin{document}

\newcommand{\DD}{\mathbb{D}}
\newcommand{\EE}{\mathbb{E}}
\newcommand{\HH}{\mathbb{H}}
\newcommand{\LL}{\mathbb{L}}
\newcommand{\NN}{\mathbb{N}}
\newcommand{\PP}{\mathbb{P}}
\newcommand{\RR}{\mathbb{R}}
\newcommand{\WW}{\mathbb{W}}
\newcommand{\SM}{\mathbb{S}}

\newcommand{\BB}{\mathcal{B}}
\newcommand{\CC}{\mathcal{C}}
\newcommand{\CF}{\mathcal{F}}
\newcommand{\CK}{\mathcal{K}}
\newcommand{\CJ}{\mathcal{J}}
\newcommand{\CL}{\mathcal{L}}
\newcommand{\CM}{\mathcal{M}}
\newcommand{\CP}{\mathcal{P}}
\newcommand{\CQ}{\mathcal{Q}}
\newcommand{\CR}{\mathcal{R}}
\newcommand{\CU}{\mathcal{U}}
\newcommand{\CW}{\mathcal{W}}
\newcommand{\CX}{\mathcal{X}}
\newcommand{\CY}{\mathcal{Y}}
\newcommand{\II}{\mathbf{1}}
\newcommand{\hX}{\hat X}
\newcommand{\bq}{\bar q}
\newcommand{\bV}{\bar V}

\newtheorem{theorem}{Theorem}[section]
\newtheorem{lemma}[theorem]{Lemma}
\newtheorem{coro}[theorem]{Corollary}
\newtheorem{defn}[theorem]{Definition}
\newtheorem{assp}{Assumption}
\newtheorem{expl}[theorem]{Example}
\newtheorem{prop}[theorem]{Proposition}
\newtheorem{rmk}[theorem]{Remark}

\newcommand\tq{{\scriptstyle{3\over 4 }\scriptstyle}}
\newcommand\qua{{\scriptstyle{1\over 4 }\scriptstyle}}
\newcommand\hf{{\textstyle{1\over 2 }\displaystyle}}
\newcommand\hhf{{\scriptstyle{1\over 2 }\scriptstyle}}

\newcommand{\proof}{\noindent {\it Proof}. }
\newcommand{\eproof}{\hfill $\Box$} 

\def\a{\alpha}      \def\al{\aleph}       \def\be{\beta}    \def\c{\check}       \def\d{\mathrm{d}}   \def\de{\delta}   \def\e{\varepsilon}  \def\ep{\epsilon}
\def\eq{\equiv}     \def\f{\varphi}   \def\g{\gamma}       \def\h{\forall}   \def\i{\bot}
\def\j{\emptyset}   \def\k{\kappa}    \def\lan{\langle}    \def\ran{\rangle} \def\lbd{\lambda}
\def\m{\mu}         \def\n{\nu}
\def\nn{\nonumber}  \def\o{\theta}    \def\p{\phi}         \def\q{\surd}     \def\r{\rho}
\def\ra{\rightarrow}
\def\s{\sigma}      \def\td{\tilde}   \def\up{\upsilon}
\def\vk{\varkappa}   \def\vr{\varrho}
\def\ve{\vee}     \def\vo{\vartheta}
\def\w{\omega}      \def\we{\wedge}     \def\x{\xi}          \def\y{\eta}      \def\z{\zeta}

\def\D{\Delta}     \def\F{\Phi}         \def\G{\Gamma}    \def\K{\times}
\def\L{\Lambda}    \def\M{\partial}     \def\N{\nabla}       \def\O{\Theta}    \def\S{\Sigma}
\def\T{\tau}       \def\U{\bigwedge}    \def\V{\bigvee}      \def\W{\Omega}    \def\X{\Xi}       \def\Ex{\exists}

\def\1{\oslash}   \def\2{\oplus}    \def\3{\otimes}      \def\4{\ominus}
\def\5{\circ}     \def\6{\odot}     \def\7{\backslash}   \def\8{\infty}
\def\9{\bigcap}   \def\0{\bigcup}   \def\+{\pm}          \def\-{\mp}
\def\la{\langle}  \def\ra{\rangle}  \def\ra{\rightarrow}

\def\tl{\tilde}
\def\trace{\hbox{\rm trace}}
\def\diag{\hbox{\rm diag}}
\def\for{\quad\hbox{for }}
\def\refer{\hangindent=0.3in\hangafter=1}

\newcommand\wD{\widehat{\D}}

\title{An explicit finite-memory scheme for approximating and sampling invariant measures of stochastic functional differential equations with infinite delay
\bf
 }

\author{
{\bf
Guozhen Li ${}^{1}$,
Shan Huang ${}^{1}$,
Xiaoyue Li ${}^{2}$\thanks{Xiaoyue Li was supported by the National Natural Science Foundation of China (No. 12371402) and the Tianjin Natural Science Foundation (24JCZDJC00830).},
Xuerong Mao${}^{3}$\thanks{Xuerong Mao was supported by the Royal Society (No. WM160014, Royal Society Wolfson Research Merit Award),
the Royal Society of Edinburgh (No. RSE1832).  }
 }
\\
${}^1$ School of Mathematics and Statistics, \\
Northeast Normal University, Changchun, Jilin, 130024, China. \\
${}^2$ School of Mathematical Sciences, \\
Tiangong University, Tianjin, 300387, China. \\
${}^3$ Department of Mathematics and Statistics, \\
University of Strathclyde, Glasgow G1 1XH, U.K. \\
}

\date{}

\maketitle

\begin{abstract}
Efficient sampling and numerical approximation of invariant probability measures (IPMs) on infinite-dimensional function spaces are important problems in scientific computing.
In this paper, we study the numerical approximation and sampling of IPMs associated with stochastic functional differential equations with infinite delay (SFDEswID).
To this end, we develop a fully explicit ergodicity-preserving truncated Euler--Maruyama scheme for SFDEswID that requires only finite historical storage and accommodates superlinearly growing coefficients. We establish strong convergence of the numerical segment process and show that it admits a unique IPM and is exponentially ergodic in the Wasserstein distance.
Building on these results, we prove the convergence of the numerical IPM to the exact one and derive an explicit convergence rate. As a consequence, we obtain a quantitative long-time sampling error estimate of order
$
O\left(e^{-\lambda_\varepsilon t_n}+\Delta^{\rho_\varepsilon}\right).
$
The results provide a rigorous and computationally efficient framework for sampling IPMs and quantifying long-time sampling errors for stochastic systems with infinite delay.

\medskip \noindent
{\small\bf Key words: }     Invariant probability measures;
Stochastic functional differential equations;
Infinite delay; Superlinear coefficients;
Numerical ergodicity;
Long-time sampling error
\end{abstract}


\section{Introduction}

\subsection{Motivations}
Efficient sampling and numerical approximation of invariant probability measures (IPMs) on infinite-dimensional function spaces arise naturally in many areas of scientific computing \cite{BGLFS17, HSV14}. One important example comes from ergodic control of stochastic systems with memory. Consider a controlled stochastic functional differential equation (SFDE)
$$
	\d X(t)=b(X_t,u(X_t))\d t+\sigma(X_t)\d B(t), \quad \ t > 0.
$$
Here, $\{X(t)\}_{t \ge 0}$ is a diffusion process in $\RR^n$, and  $\{X_t\}_{t \ge 0}$ denotes the associated memory segment process taking values in an infinite-dimensional function space. 
A central objective is to minimize the long-run average cost 
	$$
	\lim_{T\to\infty}\frac1T\EE \left[\int_0^T F(X_t,u(X_t))\d t\right].
	$$
Under suitable assumptions, the long-run average cost coincides with the ergodic cost $\mu(F)$ \cite{BYY16, K08}, where $\mu$ denotes the IPM associated with the segment process.
In practice, however, $\mu$ is typically unavailable in explicit form, and its infinite-dimensional nature makes direct computation highly challenging.
This naturally motivates a sampling approach based on numerical discretizations. More precisely, one seeks a numerical invariant measure $\mu^\Delta$ and corresponding numerical trajectories satisfying
\begin{equation}\label{1.1}
	\frac1N\sum_{n=0}^{N-1}F(X^\Delta_{t_n},u(X^\Delta_{t_n}))
	\approx
	\mu^\Delta(F)
	\approx
	\mu(F),
	\qquad N \gg 1
\end{equation}
for any $\Delta$ small enough.
Therefore, the construction of numerical schemes that preserve the ergodic properties of the underlying system is of fundamental importance.

Long-memory effects arise in a wide range of physical, biological, and economic systems, and stochastic functional differential equations with infinite delay (SFDEswID) provide a natural mathematical framework for describing such phenomena \cite{AI05, AIK05, M78, V12, WX09, YWK22}. The invariant probability measures associated with SFDEswID characterize their long-time statistical behavior and play a central role in the analysis of ergodic properties \cite{HMN23, OP11, SZJH18, YWK22, Z01}. In addition to their direct applications, several classes of SPDEs can be reduced to SFDEswID by exploiting the ideas of determining modes and inertial manifolds; the resulting dynamics are often referred to as Gibbsian dynamics \cite{EMS01, EL02}. Consequently, the long-time behavior of these SPDEs can often be investigated through the associated SFDEswID \cite{B03, BM05}. Despite their importance, the numerical approximation and sampling of invariant probability measures for SFDEswID remain largely unexplored.

Motivated by the above considerations, we consider the following SFDEswID
\begin{equation}\label{ISFDE}
\left\{
\begin{aligned}
& \d x(t)= f(x_t)\d t + g(x_t)\d B(t), \quad t > 0, \\
& x_0 = \xi \in \CC_r,
\end{aligned}\right.
\end{equation}
where
$$x_t := \{x(t+u): u \in \mathbb{R}_- \}, \quad \forall t \ge 0,$$ 
denotes the segment process. The phase space $\CC_r$ ($r > 0$) with fading memory (see \cite{MN89} for the definition) is defined by
\begin{equation}
\mathcal{C}_r =  \big\{ \phi \in C(\mathbb{R}_-; \mathbb{R}^n): \lim_{u \to -\infty} e^{ru} \phi(u)  \hbox{~exists~in}~\mathbb{R}^n \big\}
\end{equation}
with its norm $\|\phi\|_r=\sup_{-\infty< u \le 0}e^{ru}|\phi(u)|$, where $C(\mathbb{R}_-; \mathbb{R}^n)$ denotes the family of continuous functions $\phi : \mathbb{R}_-\to \RR^n$. One observes that $(\mathcal{C}_r, \|\cdot\|_r)$ is a Polish space (see \cite{HMN91} for more details). 
The mappings $f : \mathcal{C}_r  \rightarrow \mathbb{R}^n,~g : \mathcal{C}_r  \rightarrow \mathbb{R}^{n \times d}$ characterize the influence of the past on the current time. $B(t)$ is a $d$-dimensional Brownian motion. 
The objective of this work is to develop efficient numerical methods for approximating and sampling the invariant probability measure associated with \eqref{ISFDE}. In particular, we address two fundamental questions. At the finite-time level, can a numerical scheme accurately approximate the segment process? At the long-time level, can it correctly reproduce the IPM and quantify the resulting sampling error?
To answer these questions, we develop an explicit ergodicity-preserving numerical scheme for \eqref{ISFDE} and establish quantitative finite-time approximation and long-time sampling error estimates. The proposed scheme is based on temporal and spatial truncation, requires only finite historical storage, and accommodates superlinearly growing coefficients. Moreover, by removing the spatial truncation, it can also be applied to SFDEs with finite delay (SFDEswFD).

\subsection{Related Works}
The ergodicity of SFDEs has been extensively studied in recent years. A major line of research is based on Harris-type arguments. Using a weak Harris theorem, Hairer et al. \cite{HMS11} established exponential ergodicity in the Wasserstein distance for SFDEswFD. 
However, these approaches typically require non-degenerate diffusion coefficients with uniformly bounded right inverses. Under suitable dissipativity conditions, Bao et al. \cite{BYY16} removed this restriction and established ergodicity without the non-degeneracy assumption.
In contrast to the finite-delay case, the asymptotic behavior of stochastic systems with infinite delay strongly depends on the underlying phase space \cite{KS80}.
As shown in \cite[Theorem 3.1]{MN89}, uniform asymptotic stability generally fails on phase spaces that do not possess the uniform fading-memory property. This observation illustrates one of the key differences between infinite- and finite-delay systems.
Consequently, the fading-memory space $(\CC_r,\|\cdot\|_r)$ has been widely used for studying SFDEswID. Within this setting, ergodicity has been investigated using Harris-type arguments \cite{BWY20} and under suitable dissipativity conditions \cite{SWW22,WYM17}. We also refer the reader to \cite{B03,BM05,ESv10,IN64} and the references therein for further developments.

Although the existence and uniqueness of IPMs for the exact solutions of SFDEs have been well established, their numerical approximation remains an active area of research. 
Under the global Lipschitz condition, Bao et al. \cite{BSY23} showed that the discrete-time semigroup generated by the Euler–Maruyama (EM) scheme admits a unique numerical IPM for SFDEswFD, and further proved that the numerical IPM converges to the exact one in a Wasserstein distance. Nguyen et al. \cite{NNN21} relaxed the assumptions in \cite{BSY23} and obtained the same results by applying generalized Razumikhin arguments. 
However, explicit EM approximations may fail for systems with superlinearly growing coefficients. Indeed, Hutzenthaler et al. \cite{HJK11} showed that the $p$th moments of the EM approximation diverge for a broad class of SDEs, even when the exact solution possesses bounded moments. This difficulty has motivated the development of implicit schemes for approximating invariant probability measures.
For SFDEswFD, Shi et al. \cite{SWMW24} investigated existence, uniqueness of numerical IPMs generated by a backward EM scheme, and revealed that the numerical IPM converges to the underlying one in the Wasserstein distance. Chen et al. \cite{CDHS24} proposed a $\theta$-EM scheme and proved the existence and uniqueness of its numerical IPM. Furthermore, by employing Malliavin calculus techniques, they derived the weak convergence rate between the numerical and exact IPMs. 

Most existing works on numerical IPMs for SFDEs focus on finite-delay systems. Under global Lipschitz conditions, explicit EM-type schemes have been successfully employed to approximate IPMs \cite{BSY23,NNN21}. For systems with superlinearly growing coefficients, the available results rely on implicit schemes \cite{SWMW24,CDHS24}. Such schemes require the solution of a nonlinear algebraic equation at each iteration and therefore incur additional computational cost, especially for SFDEs whose coefficients are defined on infinite-dimensional function spaces. To the best of our knowledge, the numerical approximation and sampling of IPMs for SFDEswID remain largely unexplored, even for linearly growing coefficients, let alone the superlinear case.

\subsection{Our Contributions}
The numerical approximation of the IPM for SFDEswID presents several challenges. 
First, it is challenging to handle superlinearly growing coefficients while constructing an explicit numerical scheme that preserves the ergodicity of the underlying system. 
Second, storage is one of the central computational concerns for systems with infinite memory. This issue already arises when studying finite-time convergence \cite{LLS08,M12} and becomes even more significant when approximating IPMs. Therefore, storage efficiency becomes an important consideration in the design of numerical schemes.
Third, existing approaches for establishing numerical ergodicity of SFDEswFD depend explicitly on the delay length \cite{CDHS24, SWMW24} and therefore break down in the infinite-delay setting.

To overcome these difficulties, we develop an explicit time-space truncated Euler--Maruyama (TEM) scheme. The spatial truncation controls the superlinear growth of the coefficients, while the temporal truncation enables finite-memory implementation and bounded storage requirements. 
Moreover, the proposed scheme satisfies certain Lipschitz properties (see \eqref{f-lip} and \eqref{6-f-lip}), which play a crucial role in the analysis of finite-time approximation, numerical ergodicity, and long-time sampling errors. The proposed framework covers SFDEswID with super-linearly growing coefficients and it also applies to linearly growing SFDEswID and super-linear SFDEswFD by removing the spatial and temporal truncations.
Specifically, our main contributions are as follows:
\begin{enumerate}
	\item We construct an explicit TEM scheme based on space–time truncation and establish the strong convergence of the corresponding numerical segment process $\{X^{k,\Delta}_{t_n}\}_{n \ge 0}$ on any finite time horizon:
	$$
	\lim_{\D \to 0, k \to +\infty}  \sup_{0 \le t \le T}  \mathbb{E} \big\| X^{k,\Delta}_t - x_t\big\|_r^q = 0, \quad \forall T > 0.
	$$
	Moreover, we obtain a convergence rate arbitrarily close to $1/2$ under polynomial growth conditions:
	\begin{equation*}
	\sup_{0 \le t \le T}  \EE \left( \|X^{k,\Delta}_t - x_t\|_r^q  \right) \le C_{T,\xi} \left( \Delta^{\frac{q}{2}-\varepsilon_1} +  e^{-\alpha_{q,\varepsilon_2}k} \right), \quad \forall T > 0,
	\end{equation*}
	for any $\varepsilon_1 \in \left(0, q/2\right),\ \varepsilon_2 \in (0, qr)$. Here, $k$ is the truncation parameter for the delay (see Theorems \ref{th3.4} and \ref{th3.20}).

	\item  Under suitable dissipativity conditions, we prove that both the original and numerical transition semigroups admit unique IPMs and are exponentially ergodic in the Wasserstein distance:
	$$
	\WW_p(\m P_t, \pi) \le C (1 + \m(\|\cdot\|_r^p) + \pi(\|\cdot\|_r^p))  e^{-\beta_{\e} t},~ \forall \m \in \CP_p(\CC_r),~t \ge 0
	$$
	and 
	  $$
	\sup_{k \ge 1} \sup_{\Delta \in (0,\Delta_1\wedge\Delta_2]}\mathbb{W}_2(\mu P^{k,\Delta}_{t_n}, \pi^{k,\Delta}) \le C( 1+\mu(\|\cdot\|_r^2) ) e^{-\frac{\lambda-\varepsilon}{2}t_n}, \quad \forall \varepsilon \in (0, \lambda),\ \mu \in \CP_2(\CC_r).
	$$
	(see Theorems \ref{th4.6} and  \ref{th5.4}).
	
	\item Building on the strong convergence of the numerical segment process and the exponential ergodicity of the exact and numerical semigroups, we establish the convergence of the numerical IPM to the exact one and derive an explicit convergence rate:
	   $$
	\lim_{k \to \infty, \Delta \to 0}\mathbb{W}_2(\pi^{k,\Delta}, \pi) = 0.
	$$
	and 
	$$
	\WW_2(\pi^{\Delta}, \pi) \le C \Delta^{\bar\rho_\varepsilon}, \quad \forall \varepsilon \in \left(0, (1/2)\wedge\lambda\wedge\beta \right).
	$$
	Here, the truncation parameter $k$ is chosen as a function of $\Delta$. These results further yield a quantitative long-time sampling error estimate:
	 $$
	 \WW_2(\mu P^\Delta_{t_n}, \pi) \le C(1+\mu(\|\cdot\|_r^2)) e^{-\lambda_\varepsilon t_n} + C \Delta^{\rho_\varepsilon}.
	 $$
	(see Theorems \ref{IPM-conv} and \ref{IPM-rate}).    
\end{enumerate}

Owing to the time truncation device, the TEM scheme stores only $kl+1$ historical discrete-time nodes at each iteration, resulting in a storage complexity of $O((kl+1)n)$ (see Section \ref{S3.1}). As a result, the storage requirement at each iteration remains bounded and does not accumulate over time, making long-time simulations computationally feasible. 
From a sampling perspective, the TEM scheme provides an effective approach for generating samples from IPMs on $\CC_r$. In particular, the numerical ergodicity established in this work ensures that the time averages of the numerical segment process along a single trajectory can be used to approximate expectations with respect to the target distribution (see Theorem \ref{IPM-rate} and Remark \ref{rmk}). 
This significantly reduces computational cost, since expectations with respect to the target IPM can be approximated from a single numerical trajectory, as shown in \eqref{1.1}.  

This paper is organized as follows. Section \ref{S2} introduces notation and preliminary results. Section \ref{S3} presents the TEM scheme and the associated numerical segment process, and establishes their strong convergence and convergence rate on finite time intervals. Section \ref{S4} investigates the ergodic properties of the exact and numerical systems, including the existence, uniqueness, and exponential ergodicity of their IPMs. It further establishes the convergence of the numerical IPM to the exact one, derives the corresponding convergence rate, and obtains quantitative long-time sampling error estimates. Finally, Section \ref{S5} presents numerical experiments to illustrate the theoretical results.

%

%
%

\section{Preliminaries}\label{S2}
We begin by introducing some symbols commonly used in this paper. Let $d, m,n$ denote finite positive integers. 
Let $\mathbb{R}_+ = [0, \infty)$, $\mathbb{R}_- = (-\infty, 0]$, $\mathbb{N}$ and $\mathbb{N}_+$ denote the set of nonnegative and positive integers, respectively. Let $\mathbb{R}^n$ be the $n$-dimensional Euclidean space equipped with the standard Euclidean norm $|\cdot|$ and $\mathcal{B}(\mathbb{R}^n)$ denote the Borel algebra on $\mathbb{R}^n$.
If $a, b \in \mathbb{R}$, define $a \wedge b = \min\{a, b\}$ and $a\vee b=\max\{a,b\}$. Denote by $\lfloor a \rfloor$ the largest integer not greater than $a$. If $A$ is a vector or matrix, its transpose is denoted by $A^{\mathrm{T}}$.
If $\mathbb{D}$ is a set, its indicator function is denoted by $\mathbf{1}_\mathbb{D}$, namely, $\mathbf{1}_\mathbb{D}(x) = 1$ if $x \in \mathbb{D}$ and $0$ otherwise. Throughout the paper, $C$ denotes a generic positive constant whose value may vary from line to line, and $C_\zeta$ is used to emphasize the dependence on the parameter $\zeta$.

Let $(\Omega, \mathcal{F}, \mathbb{P})$ be a complete probability space equipped with filtration $\{\mathcal{F}_t\}_{t \ge 0}$ satisfying the usual conditions (i.e., it is right continuous and increasing while $\mathcal{F}_0$ contains all $\mathbb{P}$-null sets), and $\mathbb{E}$ denotes the expectation corresponding to $\mathbb{P}$. Let $\{B(t)\}_{t \ge 0}$ be a $d$-dimensional Brownian motion defined on this probability space.
Moreover, denote by $\mathcal{P}_0$ the family of probability measures $\mu$ on $\mathbb{R}_-$. For each $a>0$, define
$$
\mathcal{P}_a=\bigg\{ \mu\in\mathcal{P}_0: \mu^{(a)}:= \int_{-\infty}^0 e^{-au}\mu(\d u)<\infty \bigg\}.
$$
Clearly, $\mathcal{P}_{a_1} \subset \mathcal{P}_a\subset \mathcal{P}_0$, $a_1 >a >0$.  Moreover, if $\mu\in \mathcal{P}_{a_1}$, then $\mu^{(a)}$
is strictly increasing and continuous with respect to $a$ in $ [0,a_1]$, and satisfies $\lim_{a \downarrow 0} \mu^{(a)} = \mu^{(0)} = 1$ (see \cite[Lemma 2.1]{WYM17}).


To investigate the existence and uniqueness of global solutions, we impose the following condition.

\begin{assp}\label{a2.1-f}
There is a positive constant $K_R$ for each $R > 0$ such that
\begin{equation*}
|f(\phi)-f(\varphi)|  \le
  K_R   \|\phi-\varphi\|_r
\end{equation*}
for those $\phi, \varphi \in \mathcal{C}_r$ with $\|\phi\|_r\vee\|\varphi\|_r \le R$.
\end{assp}

\begin{assp}\label{a2.1-g}
There is a positive constant $K'_R$ for each $R > 0$ such that
\begin{equation*}
|g(\phi)-g(\varphi)|  \le
  K'_R   \|\phi-\varphi\|_r
\end{equation*}
for those $\phi, \varphi \in \mathcal{C}_r$ with $\|\phi\|_r\vee\|\varphi\|_r \le R$.
\end{assp}

\begin{assp}\label{a2.2}
There are constants $a_1 > 0$, $a_2 \ge 0$, $\bar p > 2$ and probability measure $\mu_1 \in \CP_{\bar p r}$ such that
$$
\langle \phi(0), f(\phi) \rangle  \le a_1 \left( 1 + \|\p\|_r^2 \right) -a_2 |\phi(0)|^{ \bar p} + a_2 \int_{-\infty}^0 |\phi(u)|^{\bar p} \mu_1(\d u)
$$
and
$$
   |g(\phi)|^2 \le a_1 \left( 1 + \|\p\|_r^2 \right)
$$
for any $\phi \in \mathcal{C}_r$.
\end{assp}

\begin{theorem}\label{th2.3}
Let Assumptions \ref{a2.1-f}, \ref{a2.1-g} and \ref{a2.2} hold. Then the SFDEswID \eqref{ISFDE} has a unique global solution $x(t)$ on $t \in (-\infty, \infty)$. Furthermore, for any $p > 0$, 
$$
\mathbb{E}\Big(\sup_{0\le t\le T}\|x_t\|_r^p\Big)   \le C_{T, \xi} , \quad \forall T > 0
$$
and
\begin{equation*}
\PP\{\tau_h \le T \}  \le \frac{C_{T,\xi}}{h^{p}}, \quad \forall T > 0,
\end{equation*}
where 
$$C_{T,\xi} = C_{\xi} e^{CT}, \ \hbox{ if } a_2 = 0, \quad C_{T,\xi} = C_{\xi} e^{CT^{\frac{p}{2}}}\hbox{ if } a_2 > 0,$$
and
$\tau_h = \inf\{t \ge 0: |x(t)|\ge h \}$, 
for any $h > \|\xi\|_r$.
\end{theorem}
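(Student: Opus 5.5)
The proof follows the standard localization-plus-Lyapunov route for SFDEs with infinite delay on the fading-memory space $\CC_r$.

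\textbf{Step 1: local solution.} Assumptions \ref{a2.1-f} and \ref{a2.1-g} give local Lipschitz continuity of $f,g$ on $\CC_r$. Truncate the coefficients via $f_R(\phi)=f(\pi_R\phi)$, $g_R(\phi)=g(\pi_R\phi)$, where $\pi_R\phi=\phi\,(1\wedge R/\|\phi\|_r)$ is a radial retraction. This retraction is globally Lipschitz (constant $2$) on $\CC_r$. A Picard iteration in the space of continuous adapted $\CC_r$-valued segment processes then yields a unique global solution $x^R$ of the truncated equation. The key estimate is $\|x_t-y_t\|_r\le \sup_{0\le s\le t}|x(s)-y(s)|$ when $x_0=y_0=\xi$, which reduces the $\CC_r$-norm to a sup over $[0,t]$. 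Solutions with different $R$ coincide up to the exit time $\sigma_R=\inf\{t:\|x_t\|_r\ge R\}$, giving a unique maximal local solution on $[0,\sigma_\infty)$. On $(-\infty,0]$ the solution is simply $\xi$.

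\textbf{Step 2: moment bound.} This is the main step. For $p\ge 2$, apply Itô's formula to $e^{\varepsilon t}|x(t)|^p$ with $\varepsilon\in(0,(\bar p-2)r)$ small, or simply to $|x(t)|^p$ on $[0,T]$. Assumption \ref{a2.2} gives
$$|x|^{p-2}\langle x, f\rangle+\tfrac{p-1}{2}|x|^{p-2}|g|^2\le C|x(t)|^{p-2}(1+\|x_t\|_r^2)-a_2|x(t)|^{p-2+\bar p}+a_2|x(t)|^{p-2}\int|x(t+u)|^{\bar p}\mu_1(\d u).$$
The delicate term is the last one. By Young's inequality,
$$|x(t)|^{p-2}|x(t+u)|^{\bar p}\le \tfrac{p-2}{p-2+\bar p}|x(t)|^{p-2+\bar p}+\tfrac{\bar p}{p-2+\bar p}|x(t+u)|^{p-2+\bar p}.$$
Integrate in time and use Fubini together with $\mu_1\in\CP_{\bar p r}$. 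The contribution from $t+u\le 0$ is bounded by $\|\xi\|_r^{p-2+\bar p}\mu_1^{((p-2+\bar p)r)}$. This is finite because $\mu_1\in\CP_{\bar pr}$, possibly after restricting to $p$ near $2$ and then interpolating, or after adjusting the weight. The contribution from $t+u>0$ is bounded by $\int_0^t|x(s)|^{p-2+\bar p}\d s$, and it cancels against the negative term $-a_2\int_0^t|x|^{p-2+\bar p}$ since the coefficients sum to one. This cancellation is the step I expect to be the obstacle, because the weights must be handled exactly.

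The remaining terms are controlled with the bound $\|x_t\|_r^2\le \|\xi\|_r^2\vee\sup_{s\le t}|x(s)|^2$. Apply BDG to the martingale term; its quadratic variation is bounded by $\int|x|^{2p-2}|g|^2$. Then Gronwall, applied to $\EE\sup_{s\le t\wedge\sigma_R}|x(s)|^p$, gives a bound $C_\xi e^{CT}$. In the case $a_2>0$, the Young/BDG bookkeeping for the superlinear parts costs polynomial powers of $T$, producing the stated $e^{CT^{p/2}}$ form. Letting $R\to\infty$ yields $\sigma_\infty=\infty$ a.s. and $\EE\sup_{t\le T}\|x_t\|_r^p\le \|\xi\|_r^p+\EE\sup_{s\le T}|x(s)|^p\le C_{T,\xi}$. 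Small $p\in(0,2)$ follow by Hölder's inequality.

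\textbf{Step 3: hitting-time tail.} Since $|x(\tau_h)|=h$ on $\{\tau_h\le T\}$, Chebyshev's inequality gives
$$\PP\{\tau_h\le T\}\le h^{-p}\,\EE\sup_{t\le T}|x(t)|^p\le C_{T,\xi}h^{-p}.$$
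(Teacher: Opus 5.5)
Your Steps 1 and 3 and the case $a_2=0$ are fine, but Step 2 has a genuine gap when $a_2>0$ and $p>2$. The cancellation you were worried about actually works: with $q=p-2+\bar p$, the Young weights sum to one, and Fubini gives $\int_0^t\int_{-\infty}^0|x(s+u)|^q\II_{\{s+u>0\}}\mu_1(\mathrm{d}u)\,\mathrm{d}s\le\int_0^t|x(s)|^q\,\mathrm{d}s$. The failure is in the initial-segment term. After your Young step, the part with $s+u\le 0$ equals $\int_{-\infty}^0\int_0^{t\wedge(-u)}|\xi(s+u)|^q\,\mathrm{d}s\,\mu_1(\mathrm{d}u)$. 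For the admissible initial datum $|\xi(v)|=\|\xi\|_r e^{-rv}$, this is at least $c_t\int_{-\infty}^{-t}e^{-qru}\mu_1(\mathrm{d}u)$, so it is finite only if $\mu_1\in\CP_{qr}$. Since $q>\bar p$, this is strictly stronger than the hypothesis $\mu_1\in\CP_{\bar p r}$; for instance, $\mu_1(\mathrm{d}u)\propto e^{(\bar p r+\delta)u}\,\mathrm{d}u$ with $0<\delta\le(p-2)r$ lies in $\CP_{\bar p r}$ but not in $\CP_{qr}$. So the bound your Young step produces is $+\infty$ in general, and the sentence ``this is finite because $\mu_1\in\CP_{\bar pr}$'' is false. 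This is exactly the obstruction the paper flags. Neither of your proposed remedies fixes it. Lyapunov or H\"older interpolation only passes from higher to lower moments, so working with $p$ near $2$ gives nothing for large $p$. A time weight $e^{\varepsilon t}$ does not affect the divergence as $u\to-\infty$.

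The paper sidesteps this by applying It\^o's formula only to $|x(t)|^2$, where no Young step is needed. Fubini then gives $\int_0^t\int_{-\infty}^0|x(s+u)|^{\bar p}\mu_1(\mathrm{d}u)\,\mathrm{d}s\le C\|\xi\|_r^{\bar p}\mu_1^{(\bar p r)}+\int_0^t|x(s)|^{\bar p}\,\mathrm{d}s$, so the $\bar p$-terms cancel pathwise using only $\mu_1\in\CP_{\bar p r}$. The resulting pathwise bound $|x(t)|^2\le C_\xi+C\int_0^t(1+\|x_s\|_r^2)\,\mathrm{d}s+M(t)$ is raised to the power $p/2$ and closed with BDG and Gronwall. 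The factor $T^{p/2-1}$ coming from H\"older in time is what produces $e^{CT^{p/2}}$; your ``bookkeeping'' explanation does not identify this. Alternatively, your direct route on $|x|^p$ can be rescued by splitting the $\mu_1$-integral at $u=-s$ \emph{before} using Young. On $\{u\le -s\}$, the estimate $|\xi(s+u)|^{\bar p}\le\|\xi\|_r^{\bar p}e^{-\bar p r(s+u)}$ bounds that piece by $\|\xi\|_r^{\bar p}\mu_1^{(\bar p r)}|x(s)|^{p-2}\le C_\xi(1+|x(s)|^p)$. Young's inequality and the cancellation are then used only on $\{-s<u\le 0\}$. That version needs only $\mu_1\in\CP_{\bar p r}$ and yields a constant $C_\xi e^{CT}$, which for $p\ge 2$ is no worse than the stated one.
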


Since the proof of Theorem \ref{th2.3} is standard, we only outline the main ideas and omit the details. When $a_2 = 0$, we apply It$\hat{\rm o}$’s formula directly to $|x(t)|^p$ and use Assumption \ref{a2.2} to obtain the result by standard arguments. When $a_2 > 0$, directly applying It$\hat{\rm o}$’s formula to $|x(t)|^p$ would require the additional condition $\mu_1 \in \mathcal{\CP}_{p+\bar p-2}$. Therefore, we first apply It$\hat{\rm o}$’s formula to $|x(t)|^2$ and use Assumption \ref{a2.2}. Raising the resulting estimate to the power $p/2$ and applying standard arguments then yields the desired result.

\section{Numerical segment process}\label{S3}
The objective of this section is to establish finite-time approximation results for the TEM scheme, which serve as one of the key ingredients in the subsequent analysis of IPM approximation.
It is well known that the segment process $\{x_t\}_{t \ge 0}$ possesses the Markov property, whereas the solution process $\{x(t)\}_{t \ge 0}$ does not. The Markov property plays a crucial role in the analysis of ergodicity, making it important to numerically approximate the exact segment process. Existing studies on numerical approximations of segment processes for SFDEs with superlinear drift coefficients have mainly focused on SFDEswFD and implicit schemes \cite{CDHS24, SWMW24}. Consequently, this section constructs an explicit numerical scheme for approximating the segment process associated with the infinite-delay system \eqref{ISFDE}.

\subsection{Truncated Euler-Maruyama scheme}\label{S3.1}

This subsection is devoted to constructing an explicit numerical scheme and proving the boundedness of the corresponding numerical segment process over any finite time interval.
For any $R \ge 0$, we can infer from Assumptions \ref{a2.1-f} and \ref{a2.1-g} that there exists an increasing function $\Lambda : [0, \8) \ra \RR_+$ such that
\begin{equation}\label{trun}
|f(\phi)-f(\varphi)| \le \Lambda(R)  \|\phi-\varphi\|_r.
\end{equation}
for any $\phi,~\varphi \in \mathcal{C}_r $ with $\|\phi\|_r \vee \|\varphi\|_r \le R$.
Let $\Lambda^{-1}$ denote the inverse function of $\Lambda$. It is clear that $\Lambda^{-1}:[\Lambda(0),\infty)\rightarrow\mathbb{R}_+$. Without loss of generality, we assume that there is a positive integer $l$ such that $\Delta = 1/l \in (0, 1]$. Then, define truncation mapping $\Pi^{\Delta}:\mathbb{R}^n\rightarrow \mathbb{R}^n$ by
\begin{align}\label{L}
\Pi^{\Delta}(x)=\bigg( |x|\wedge\Lambda^{-1}\left(L \Delta^{-\theta}\right)  \bigg)\frac{x}{|x|}, \quad \forall x\in\mathbb{R}^n,
\end{align}
where $x/|x|=0$ if $x=0$, the constant $\theta \in (0,1/2]$ and $L=1 \vee |f(\mathbf{0})|$.
Next, we propose our numerical scheme. Let $t_j=j\D$ for any integer $j$ and define
\begin{align}\label{TEM}
\begin{cases}
Y^{k,\Delta}(t_j)=\xi(t_j), ~ j = -kl, \cdots, 0,\\
X^{k,\Delta}({t_j})=\Pi^{\Delta}(Y^{k,\Delta}(t_j)), ~j \ge  -kl,\\
Y^{k,\Delta}(t_{j+1})=X^{k,\Delta}(t_j)+f(X^{k,\Delta}_{t_j})\D
+ g( X^{k,\Delta}_{t_j}) \Delta B_j, ~ j=0,1, \dots, \\
\end{cases}
\end{align}
where $\D B_j=B(t_{j+1})-B(t_j)$ and $X^{k,\Delta}_{t_j}$ is a $\CC_r$-valued random variable defined by
\begin{equation}\label{LI}
X^{k,\D}_{t_j}(u)=\left\{
\begin{aligned}
&\frac{t_{m+1}-u}{\D} X^{k,\D}(t_{j+m}) + \frac{u-t_m}{\D} X^{k,\D}(t_{j+m+1}), \\
&~~~~~~~~~~~~~~~~~t_m \le u \le t_{m+1}, -kl \le m \le -1, \\
&X^{k,\D}(t_j - k), \quad u < -k.
\end{aligned}\right.
\end{equation}
We refer to the numerical method as a {\emph {TEM scheme}}.
In addition, define continuous-time numerical solution and numerical segment by
\begin{equation}\label{step}
X^{k,\D}(t)=X^{k,\D}(t_j),  ~ X^{k,\D}_t=X^{k,\D}_{t_j},\quad t \in [t_j, t_{j+1})
\end{equation}
for any integer $j \ge 0$, we call them the {\emph{TEM numerical solution}} and {\emph {TEM numerical segment}} respectively. 

\begin{rmk}\label{rmk3.1}
	 We now highlight the main advantages of the TEM scheme.
	\begin{itemize}
		\item[$(\romannumeral1)$] It follows from \eqref{trun} that for any $\Delta \in (0,1]$ and any $\phi, \psi \in \CC_r$ with $\|\phi\|_r \vee \|\psi\|_r \le \Lambda^{-1}(L\Delta^{-\theta})$, 
        \begin{equation}\label{f-lip}
        |f(\phi)-f(\psi)| \le L \Delta^{-\theta}\|\phi-\psi\|_r.
        \end{equation}
        Combining this with $L \ge 1 \vee |f(\mathbf{0})|$ yields
        \begin{equation}\label{f-lin}
		| f \big(X^{k,\Delta}_t\big)  |     \le   L \Delta^{-\theta} ( 1+   \|X^{k,\Delta}_t \|_r ), \quad \forall t \ge 0.
		\end{equation}
        Note that \eqref{f-lip} offers a more favorable property than \cite[$(2.13)$]{LLM}, which is key to proving the ergodicity of the numerical TEM scheme.
		
		\item[$(\romannumeral2)$] If $f$ and $g$ are globally Lipschitz continuous, which means $K_R \equiv \bar L$ for any $R > 0$, then let $\Lambda(R) \equiv \bar L$ and $\Lambda^{-1}(R)=\8$. Thus, $\Pi^{\D}(x)=x$ for any $\Delta \in (0, 1]$, $x\in\RR^n$, which means the TEM scheme reduces to the classical EM scheme.
	\end{itemize}
\end{rmk}

The following theorem demonstrates that the $p$-th ($p>0$) moment of the numerical segment processes $X^{k,\Delta}_t$ is bounded on any finite time interval.
\begin{theorem}\label{th3.1}
	Let Assumptions \ref{a2.1-f}, \ref{a2.1-g} and \ref{a2.2} hold. Then, for any $p>0$,
$$
		\sup_{k \ge 1} \sup_{0 < \D \le 1} \EE \Big( \sup_{0 \le t \le T} \|X^{k,\D}_t \|_r^p \Big) \le C_{T,\xi} , \quad \forall T > 0,
$$
where $C_{T,\xi}$ is defined as in Theorem \ref{th2.3}.
\end{theorem}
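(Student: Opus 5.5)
Our plan is to reduce the segment norm to the discrete nodes and then run a discrete energy argument on the auxiliary sequence $Y^{k,\Delta}(t_j)$. First, from \eqref{LI} and \eqref{step}, the piecewise-linear interpolation and the constant extension for $u<-k$ give, for $t\in[t_j,t_{j+1})$,
\[
\|X^{k,\Delta}_t\|_r \le \sup_{-kl\le m\le 0} e^{r t_m}\,|X^{k,\Delta}(t_{j+m})| \le \|\xi\|_r + \max_{0\le i\le j} |X^{k,\Delta}(t_i)|,
\]
because $|X^{k,\Delta}(t_i)|=|\Pi^\Delta(\xi(t_i))|\le|\xi(t_i)|\le e^{-rt_i}\|\xi\|_r$ for $i\le 0$. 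Since $|X^{k,\Delta}(t_i)|\le |Y^{k,\Delta}(t_i)|$, it suffices to bound $\EE\max_{0\le i\le j}|Y^{k,\Delta}(t_i)|^p$ uniformly in $k$ and $\Delta$. A finer bound keeping the exponential weights, $\|X^{k,\Delta}_{t_j}\|_r^2\le \|\xi\|_r^2e^{-2rt_j}+\max_{0\le i\le j}e^{-2r(t_j-t_i)}|X^{k,\Delta}(t_i)|^2$, will be used when integrals of the segment norm appear.

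Next we expand $|Y^{k,\Delta}(t_{j+1})|^2$ using \eqref{TEM}:
\[
|Y(t_{j+1})|^2 = |X(t_j)|^2 + 2\langle X(t_j), f(X_{t_j})\rangle\Delta + |f(X_{t_j})|^2\Delta^2 + |g(X_{t_j})\Delta B_j|^2 + 2\langle X(t_j)+f\Delta, g\Delta B_j\rangle .
\]
The key is to use Assumption \ref{a2.2} at $\phi=X^{k,\Delta}_{t_j}$, noting $\phi(0)=X^{k,\Delta}(t_j)$. The drift term gives $a_1(1+\|X_{t_j}\|_r^2)\Delta - a_2|X(t_j)|^{\bar p}\Delta + a_2\Delta\int_{-\infty}^0|X_{t_j}(u)|^{\bar p}\mu_1(\d u)$. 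The term $\Delta^2|f|^2$ is controlled via \eqref{f-lin} by $L^2\Delta^{2-2\theta}(1+\|X_{t_j}\|_r)^2\le C\Delta(1+\|X_{t_j}\|_r^2)$, since $\theta\le 1/2$. This is exactly where the truncation is needed. Summing over $j$ and using $|X(t_j)|\le|Y(t_j)|$, the memory integral must be handled. Because $X_{t_j}(u)$ is a convex combination of nodal values, we have $|X_{t_j}(u)|^{\bar p}\le$ the max of the neighboring nodal $|X|^{\bar p}$. Summing $\Delta\sum_j\int|X_{t_j}(u)|^{\bar p}\mu_1(\d u)$ and interchanging the sums shows it is at most $\Delta\sum_{i\le j}|X(t_i)|^{\bar p}\cdot\mu_1(\cdot)$ plus an initial-data contribution. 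The initial part is bounded by $C\|\xi\|_r^{\bar p}\mu_1^{(\bar p r)}$, using $\mu_1\in\CP_{\bar pr}$. The $u<-k$ tail works because $X(t_j-k)$ is again a nodal value. The $a_2$ terms therefore cancel in the sum up to constants.

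The step we expect to be the main obstacle is this cancellation. A point mass of $\mu_1$ per shift in the discrete index may produce a factor slightly larger than $1$ from the interpolation (two neighboring nodes). If it does, we would reweight by accepting $a_2$ times a factor, which is not allowed here. We would instead try the bound $|X_{t_j}(u)|^{\bar p}\le \frac{t_{m+1}-u}{\Delta}|X(t_{j+m})|^{\bar p}+\frac{u-t_m}{\Delta}|X(t_{j+m+1})|^{\bar p}$, which holds by convexity. Summing over $j$, each node then receives total weight exactly $\mu_1$-mass $\le 1$, so the cancellation is exact.

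After the cancellation we obtain
\[
|Y(t_{j+1})|^2\le C_\xi + C\sum_{i\le j}(1+\|X_{t_i}\|_r^2)\Delta + M_{j+1},
\]
where $M_{j+1}$ is the martingale term. We then raise this to the power $p/2$ for $p\ge 2$ and take $\EE\max$. The quadratic variation of $M$ is handled by Burkholder--Davis--Gundy using $|g|^2\le a_1(1+\|\cdot\|_r^2)$ and $|f|\Delta\le C\Delta^{1/2}(1+\|\cdot\|_r)$. The $|g\Delta B_j|^2$ term is treated through its moments. The martingale is absorbed by Young's inequality, and Hölder's inequality on the time sum gives a factor $T^{p/2-1}$. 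Using the first-paragraph reduction and a discrete Gronwall inequality then yields $C_\xi e^{CT^{p/2}}$ when $a_2>0$ (or $e^{CT}$ in the linear case, where we can work with $|Y|^p$ directly), uniformly in $k$ and $\Delta$. The case $p<2$ follows by Hölder's inequality.
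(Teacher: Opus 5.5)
Your proposal is correct and follows essentially the same route as the paper. You expand $|Y^{k,\Delta}(t_{j+1})|^2$, apply Assumption \ref{a2.2} and \eqref{f-lin} with $\theta\le 1/2$, and cancel the $a_2$ terms via $\Delta\sum_j\int_{-\infty}^0|X^{k,\Delta}_{t_j}(u)|^{\bar p}\mu_1(\d u)\le C\|\xi\|_r^{\bar p}+\Delta\sum_j|X^{k,\Delta}(t_j)|^{\bar p}$; you prove this bound by convexity of the interpolation and an interchange of sums, where the paper cites \cite{LLM}. You then raise to the power $p/2$, use Burkholder--Davis--Gundy and Young, reduce the segment norm to nodal values, and close with discrete Gronwall.

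One small correction: your reduction $\|X^{k,\Delta}_{t_j}\|_r\le\sup_m e^{rt_m}|X^{k,\Delta}(t_{j+m})|$ is off by a factor $e^{r\Delta}$. The weight $e^{ru}$ varies across a cell; with equal weighted nodal values the midpoint gives $\cosh(r\Delta/2)>1$. This is harmless since $\Delta\le 1$, and it matches the paper's $\|X^{k,\Delta}_{t_n}\|_r^p\le e^{pr\Delta}\bigl(\|\xi\|_r^p+\sup_{j}|X^{k,\Delta}(t_j)|^p\bigr)$.
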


\begin{proof}
	$(\romannumeral1)$ \underline{\textbf{Case $1$: $a_2 > 0$.}} By the Lyapunov inequality, we only need to show the theorem for the case of $p \ge 2$. Fix $k \ge 1$, $T > 0$ and $\Delta \in (0, 1]$ arbitrarily. 
	By \eqref{TEM},
	$$
	\begin{aligned}
	|X^{k,\Delta}(t_{j+1})|^2    \le  & |X^{k,\Delta}(t_j)|^2 + 2 \langle X^{k,\D}(t_j),  f(X^{k,\D}_{t_j}) \rangle \D +  | f(X^{k,\D}_{t_j}) |^2 \D^2 + | g(X^{k,\D}_{t_j}) \Delta B_j |^2  \\
	+  &  2 \big\langle X^{k,\D}(t_j) ,  g(X^{k,\D}_{t_j}) \D B_j \big\rangle  +  2 \big\langle f(X^{k,\D}_{t_j}) \D,  g(X^{k,\D}_{t_j}) \D B_j \big\rangle.
	\end{aligned}
	$$
	Summing from $j=0$ to $n-1$, where $1 \le n \le \lfloor T/\D\rfloor$ (without loss of generality, we assume that $T \ge \D$), then making use of Assumption \ref{a2.2}, \eqref{f-lin} and $\theta \in (0,1/2]$, we arrive at
	$$
	\begin{aligned}
	& |X^{k,\Delta}(t_n)|^2  \\  \le  & |\xi(0)|^2 + 2 \Delta \sum_{j=0}^{n-1}  \langle X^{k,\D}(t_j),  f(X^{k,\D}_{t_j}) \rangle +   \D^2 \sum_{j=0}^{n-1} | f(X^{k,\D}_{t_j}) |^2  + \sum_{j=0}^{n-1} | g(X^{k,\D}_{t_j}) \Delta B_j |^2   \\
	&+    2 \D \sum_{j=0}^{n-1} \big| \big\langle f(X^{k,\D}_{t_j}) ,  g(X^{k,\D}_{t_j}) \D B_j \big\rangle \big| + 2 \left| \sum_{j=0}^{n-1} \big\langle X^{k,\D}(t_j) ,  g(X^{k,\D}_{t_j}) \D B_j \big\rangle \right| \\
	\le & \|\xi\|_r^2 + 2 \Delta \sum_{j=0}^{n-1} \left( a_1 + a_1 \|X^{k,\Delta}_{t_j}\|_r^2 - a_2 |X^{k,\Delta}(t_j)|^{\bar p} + a_2 \int_{-\infty}^0 |X^{k,\Delta}_{t_j}(u)|^{\bar p} \mu_1(\d u) \right) \\
	& + C \Delta^{2-2\theta} \sum_{j=0}^{n-1} \left( 1 + \|X^{k,\Delta}_{t_j}\|_r^2\right) + C \sum_{j=0}^{n-1} \left( 1 + \|X^{k,\Delta}_{t_j}\|_r^2 \right) |\Delta B_j|^2 \\
	& + C \Delta^{1-\theta} \sum_{j=0}^{n-1} \left( 1 + \|X^{k,\Delta}_{t_j}\|_r^2 \right) |\Delta B_j| + 2 \left| \sum_{j=0}^{n-1} \big\langle X^{k,\D}(t_j) ,  g(X^{k,\D}_{t_j}) \D B_j \big\rangle \right| \\
	\le & \|\xi\|_r^2 + C T + C \Delta \sum_{j=0}^{n-1} \|X^{k,\Delta}_{t_j}\|_r^2  +
	2 \Delta \sum_{j=0}^{n-1} \left( - a_2 |X^{k,\Delta}(t_j)|^{\bar p} + a_2 \int_{-\infty}^0 |X^{k,\Delta}_{t_j}(u)|^{\bar p} \mu_1(\d u) \right) \\
	& + C \sum_{j=0}^{n-1} \left( 1 + \|X^{k,\Delta}_{t_j}\|_r^2 \right) |\Delta B_j|^2  + C \Delta^{1-\theta} \sum_{j=0}^{n-1} \left( 1 + \|X^{k,\Delta}_{t_j}\|_r^2 \right) |\Delta B_j| \\
	& + 2 \left| \sum_{j=0}^{n-1} \big\langle X^{k,\D}(t_j) ,  g(X^{k,\D}_{t_j}) \D B_j \big\rangle \right|. \\
	\end{aligned}
	$$
	A same argument as in the proof of \cite[Theorem 3.1]{LLM} leads to
	$$
	\Delta \sum_{j=0}^{n-1} \int_{-\infty}^0 |X^{k,\Delta}_{t_j}(u)|^{\bar p} \mu_1(\d u) \le C \|\xi\|_r^{\bar p} + \Delta \sum_{j=0}^{n-1} |X^{k,\Delta}(t_j)|^{\bar p}.
	$$
	Therefore, 
	$$
	\begin{aligned}
	|X^{k,\Delta}(t_n)|^2   
	\le & C \left( \|\xi\|_r^2 + \|\xi\|_r^{\bar p} \right) + C T + C \Delta \sum_{j=0}^{n-1} \|X^{k,\Delta}_{t_j}\|_r^2   + C \sum_{j=0}^{n-1} \left( 1 + \|X^{k,\Delta}_{t_j}\|_r^2 \right) |\Delta B_j|^2 \\
	+ & C \Delta^{1-\theta} \sum_{j=0}^{n-1} \left( 1 + \|X^{k,\Delta}_{t_j}\|_r^2 \right) |\Delta B_j|  + 2 \left| \sum_{j=0}^{n-1} \big\langle X^{k,\D}(t_j) ,  g(X^{k,\D}_{t_j}) \D B_j \big\rangle \right|. \\
	\end{aligned}
	$$
	Raising the both side to the power $p/2$ yields
	$$
	\begin{aligned}
	& 6^{1-\frac{p}{2}}|X^{k,\Delta}(t_n)|^p \\   
	\le & C \left( \|\xi\|_r^p + \|\xi\|_r^{\frac{p\bar p}{2}} \right) + C T^{\frac{p}{2}} + C T^{\frac{p}{2}-1} \Delta \sum_{j=0}^{n-1} \|X^{k,\Delta}_{t_j}\|_r^p   + C n^{\frac{p}{2}-1} \sum_{j=0}^{n-1} \left( 1 + \|X^{k,\Delta}_{t_j}\|_r^p \right) |\Delta B_j|^p \\
	+ & C T^{\frac{p}{2}-1} \Delta^{1-\frac{p\theta}{2}} \sum_{j=0}^{n-1} \left( 1 + \|X^{k,\Delta}_{t_j}\|_r^p \right) |\Delta B_j|^{\frac{p}{2}}  + 2^{\frac{p}{2}} \left| \sum_{j=0}^{n-1} \big\langle X^{k,\D}(t_j) ,  g(X^{k,\D}_{t_j}) \D B_j \big\rangle \right|^{\frac{p}{2}}. \\
	\end{aligned}
	$$
	Hence, for any $1 \le N \le \lfloor T/\D \rfloor$, 
	\begin{equation}\label{3.15}
	\begin{aligned}
	6^{1-\frac{p}{2}} \EE \left( \sup_{0 \le n \le N}\left|X^{k,\D}(t_n)\right|^{p} \right) 
	\le & C \left( \|\xi\|_r^{p} + \|\xi\|_r^{\frac{p \bar p}{2}} \right)  +  CT^{\frac{p}{2}} + C T^{\frac{p}{2}-1} \D \sum_{j=0}^{N-1} \EE \|X^{k,\D}_{t_j}\|_r^{p}  \\
	+ & 2^{\frac{p}{2}} \EE \left( \sup_{0 \le n \le N}\left|   \sum_{j=0}^{n-1} \big\langle X^{k,\D}(t_j) ,  g(X^{k,\D}_{t_j}) \D B_j \big\rangle \right|^{\frac{p}{2}} \right) \\
	\end{aligned}
	\end{equation}
	Moreover, it follows from the Burkholder-Davis-Gundy inequality (see \cite[Theorem 1.7.3]{M08}), the elementary inequalities $2xy \le \varepsilon x^2 + y^2/\varepsilon$ for any $\varepsilon > 0$ and Assumption \ref{a2.2} that,
	\begin{equation}\label{3-3-16}
	\begin{aligned}
	& 6^{\frac{p}{2}-1}2^{\frac{p}{2}} \EE \left( \sup_{0 \le n \le N}\left|   \sum_{j=0}^{n-1} \big\langle X^{k,\D}(t_j) ,  g(X^{k,\D}_{t_j}) \D B_j \big\rangle \right|^{\frac{p}{2}} \right) \\
	\le & C \EE \left( \sum_{j=0}^{N-1}  | X^{k,\Delta}(t_j) |^2  | g(X^{k,\Delta}_{t_j})  |^2 \Delta \right)^{\frac{p}{4}} \\
	\le & C \EE \left[ \bigg( \sup_{0\le n \le N} | X^{k,\Delta}(t_n) |^{\frac{p}{2}} \bigg) \bigg( \sum_{j=0}^{N-1} | g(X^{k,\Delta}_{t_j}) |^2 \Delta \bigg)^{\frac{p}{4}} \right] \\
	\le & C \EE \left[ \bigg( \sup_{0\le n \le N} \| X^{k,\Delta}_{t_n} \|_r^{\frac{p}{2}} \bigg) \bigg( \sum_{j=0}^{N-1}  |g(X^{k,\Delta}_{t_j}) |^2 \Delta \bigg)^{\frac{p}{4}} \right] \\
	\le & \frac{1}{2} \EE  \bigg( \sup_{0\le n \le N} \| X^{k,\Delta}_{t_n} \|_r^{p} \bigg) + C   \EE \left( \sum_{j=0}^{N-1} |g(X^{k,\Delta}_{t_j})|^2 \Delta \right)^{\frac{p}{2}} \\
	\le & \frac{1}{2} \EE  \bigg( \sup_{0\le n \le N} \| X^{k,\Delta}_{t_n}\|_r^{p} \bigg)  + C T^{\frac{p}{2}} +  C T^{\frac{p}{2}-1} \Delta \sum_{j=0}^{N-1} \EE \|X^{k,\Delta}_{t_j}\|_r^{p}.\\
	\end{aligned}
	\end{equation}
	Inserting \eqref{3-3-16} into \eqref{3.15} leads to
	\begin{equation}\label{3-3.16}
	\begin{aligned}
	& \EE \left( \sup_{0 \le n \le  N}\left|X^{k,\D}(t_n)\right|^{p} \right) \\
	\le & \frac{1}{2} \EE\left( \sup_{0 \le n \le N} \|X^{k,\Delta}_{t_n}\|_r^p \right) +   C \left( \|\xi\|_r^{p} + \|\xi\|_r^{\frac{p \bar p}{2}} \right)  +  C  T^{\frac{p}{2}} +  C T^{\frac{p}{2}-1} \D \sum_{j=0}^{N-1} \EE \|X^{k,\D}_{t_j}\|_r^{p}.
	\end{aligned}
	\end{equation}
	The same argument as in the proof of \cite[Lemma 3.4, p.18]{LLM} yields
	$$
	\| X^{k,\Delta}_{t_n}\|_r^{p} \le e^{p r \Delta} \Big(\|\xi\|_r^{p} + \sup_{0 \le j \le n}|X^{k,\Delta}(t_j)|^{p}  \Big)
	$$
	for any $n \ge 0$. Combining this with \eqref{3-3.16}, we conclude that
	$$
	\mathbb{E} \bigg( \sup_{0 \le n \le N} \| X^{k,\Delta}_{t_n}\|_r^p \bigg) \le  C \left( \|\xi\|_r^{p} + \|\xi\|_r^{\frac{p \bar p}{2}} \right)  +  C  T^{\frac{p}{2}} +  C T^{\frac{p}{2}-1} \D \sum_{j=0}^{N-1} \EE \|X^{k,\D}_{t_j}\|_r^{p}.
	$$
	An application of discrete Gronwall's inequality (see \cite[Theorem 2.5, p.56]{MY06}) gives
	$$
	\mathbb{E} \bigg( \sup_{0 \le n \le N} \| X^{k,\Delta}_{t_n}\|_r^p \bigg) \le C \left( 1 + \|\xi\|_r^{p} + \|\xi\|_r^{\frac{p \bar p}{2}} \right) e^{C T^{\frac{p}{2}}}.
	$$
	
	$(\romannumeral2)$ \underline{\textbf{Case $2$: $a_2=0$:}} If $a_2=0$, the result follows from the binomial expansion theorem together with the same techniques as in $(\romannumeral1)$. We omit the details for brevity.

\end{proof}
$\hfill\square$

\subsection{Strong convergence of numerical segment process}\label{sec3.2}
This section is devoted to the strong convergence of the TEM numerical segment process. To this end, we introduce a truncated SFDE with finite delay, proposed in \cite{LLMS23} as an auxiliary equation, and briefly review it for the reader's convenience.
For each positive integer $k$, define the truncation mapping $\pi_k:\mathcal{C}_r\to\mathcal{C}_r$ by
$$
\pi_k(\f)(u)=
\begin{cases}
  \f(u), & \mbox{if } u \in [-k,0], \\
  \f(-k),  & \mbox{if }   u \in (-\infty,-k).
\end{cases}
$$
Furthermore, define $f_k:\mathcal{C}_r \rightarrow \mathbb{R}^n$ and $g_k:\mathcal{C}_r \rightarrow \mathbb{R}^{n \K d}$ by
$$
f_k(\f)=f(\pi_k(\f)),~ g_k(\f)=g(\pi_k(\f)).
$$
We then consider the truncated SFDE with finite delay
\begin{equation}\label{TSFDE}
\left\{
\begin{aligned}
   & \d x^k(t) = f_k(x^k_t) \d t + g_k(x^k_t)\d B(t), \quad t \ge 0,  \\
   & x^k_0 = \xi.
\end{aligned}\right.
\end{equation}

Unlike the existing result \cite[Theorem $3.1$]{LLMS23}, which establish the boundedness of the solution process $x^k(t)$, 
the following theorem provides a bound for the segment process $x^k_t$. The proof is similar to that of 
Theorem \ref{th2.3}, and we omit it for brevity.

\begin{theorem}\label{th2.3'}
Let Assumptions \ref{a2.1-f}, \ref{a2.1-g} and \ref{a2.2} hold. Then the SFDE \eqref{TSFDE} has a unique global solution $x^k(t)$ on $t \in (-\infty, \infty)$. Furthermore, for any $p > 0$, 
$$
 \sup_{k \ge 1} \mathbb{E} \Big( \sup_{0\le t \le T} \|x^k_t\|_r^p \Big)  \le C_{T, \xi} , \quad \forall T > 0
$$
and
\begin{equation*}
 \sup_{k \ge 1}\PP\{\tau^k_h \le T \}  \le \frac{C_{T,\xi}}{h^{p}}, \quad \forall T > 0,
\end{equation*}
where 
$C_{T,\xi}$ is defined in Theorem \ref{th2.3}
and
$\tau^k_h = \inf\{t \ge 0: |x^k(t)|\ge h\}$ for any $h > \|\xi\|_r$.
\end{theorem}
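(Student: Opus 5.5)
The plan is to treat \eqref{TSFDE} as an SFDEswID whose coefficients are $f_k,g_k$, and to check that these satisfy Assumptions \ref{a2.1-f}, \ref{a2.1-g} and \ref{a2.2} with constants independent of $k$. The argument behind Theorem \ref{th2.3} then applies verbatim and gives bounds uniform in $k$.

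\textbf{Step 1: the truncation map.} First I show that $\pi_k$ is $1$-Lipschitz on $\CC_r$. For $u\ge -k$ we have $e^{ru}|\pi_k(\phi)(u)-\pi_k(\varphi)(u)|=e^{ru}|\phi(u)-\varphi(u)|$. For $u<-k$ we have $e^{ru}|\phi(-k)-\varphi(-k)|\le e^{-rk}|\phi(-k)-\varphi(-k)|\le\|\phi-\varphi\|_r$. Hence $\|\pi_k\phi-\pi_k\varphi\|_r\le\|\phi-\varphi\|_r$, and in particular $\|\pi_k\phi\|_r\le\|\phi\|_r$. It follows that $f_k,g_k$ satisfy the local Lipschitz conditions with the same $K_R,K'_R$. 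Since $\pi_k(\phi)(0)=\phi(0)$, the growth bound on $g$ also transfers directly.

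\textbf{Step 2: the drift condition.} Here there is a genuine change:
$$\langle\phi(0),f_k(\phi)\rangle\le a_1(1+\|\phi\|_r^2)-a_2|\phi(0)|^{\bar p}+a_2\int_{-\infty}^0|\pi_k\phi(u)|^{\bar p}\mu_1(\d u).$$
The last integral equals $\int_{[-k,0]}|\phi(u)|^{\bar p}\mu_1(\d u)+\mu_1((-\infty,-k))|\phi(-k)|^{\bar p}$. This has the form $\int|\phi|^{\bar p}\d\mu_1^k$, where $\mu_1^k$ is the push-forward of $\mu_1$ under $u\mapsto u\vee(-k)$. Since $e^{-\bar pr(u\vee -k)}\le e^{-\bar pru}$, we get $(\mu_1^k)^{(\bar p r)}\le\mu_1^{(\bar pr)}$, so $\mu_1^k\in\CP_{\bar pr}$ uniformly in $k$. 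Thus Assumption \ref{a2.2} holds for $(f_k,g_k)$ with $\mu_1^k$ in place of $\mu_1$.

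\textbf{Step 3: global solution and moment bounds.} Local Lipschitz continuity gives a unique local solution, and the initial segment fixes $x^k(t)$ for $t\le0$. For the moment bound in the case $a_2>0$, I apply It\^o's formula to $|x^k(t)|^2$. The key estimate, the analogue of the one borrowed from \cite{LLM}, is
$$\int_0^t\int_{-\infty}^0|x^k(s+u)|^{\bar p}\mu_1^k(\d u)\d s\le C\|\xi\|_r^{\bar p}+\int_0^t|x^k(s)|^{\bar p}\d s.$$
I prove it by Fubini: the part with $s+u\le0$ is bounded by $\|\xi\|_r^{\bar p}\int_0^t\int e^{-\bar pr(s+u)}\mu_1^k(\d u)\d s\le \|\xi\|_r^{\bar p}\,\mu_1^{(\bar pr)}/(\bar pr)$. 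The remaining part is at most $\int_0^t|x^k|^{\bar p}$, since $\mu_1^k$ is a probability measure. This absorbs the delayed $\bar p$-term into $-a_2\int|x^k|^{\bar p}$. I then raise to the power $p/2$, apply BDG to the martingale term, and use the segment bound $\|x^k_t\|_r^p\le C(\|\xi\|_r^p+\sup_{s\le t}|x^k(s)|^p)$. Gronwall gives $\EE\sup_{t\le T}\|x^k_t\|_r^p\le C_\xi e^{CT^{p/2}}$. I carry this out first up to the stopping time $\tau^k_h$ and then let $h\to\infty$, which simultaneously yields nonexplosion. The case $a_2=0$ is the direct It\^o computation on $|x^k|^p$.

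\textbf{Step 4: exit-time bound.} The estimate $\PP\{\tau^k_h\le T\}\le C_{T,\xi}/h^p$ follows from Chebyshev's inequality, since $|x^k(\tau^k_h)|=h$ on $\{\tau^k_h\le T\}$.

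The main obstacle is ensuring that every constant is independent of $k$. The only delicate point is the uniform membership $\mu_1^k\in\CP_{\bar pr}$ from Step 2; all other constants come from $a_1,a_2,\mu_1^{(\bar pr)}$ and are unchanged by the truncation.
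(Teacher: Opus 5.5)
Your proposal is correct and takes essentially the paper's approach. The paper omits this proof as ``similar to that of Theorem~\ref{th2.3}'' (It\^o's formula on $|x|^2$ raised to the power $p/2$ when $a_2>0$, and on $|x|^p$ when $a_2=0$); your check that $f_k,g_k$ satisfy Assumptions~\ref{a2.1-f}, \ref{a2.1-g} and \ref{a2.2} with $k$-independent constants, via $\|\pi_k\phi\|_r\le\|\phi\|_r$ and the push-forward measure $\mu_1^k$ with $(\mu_1^k)^{(\bar p r)}\le\mu_1^{(\bar p r)}$, is exactly what makes that argument uniform in $k$, and the only slip is cosmetic: the growth bound on $g_k$ transfers because $\|\pi_k\phi\|_r\le\|\phi\|_r$, whereas $\pi_k(\phi)(0)=\phi(0)$ is what you need for the $\phi(0)$-terms in the drift condition.
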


To establish the strong convergence of the TEM numerical segment $X^{k,\Delta}_t$ to the exact segment $x_t$, we first consider
$$
  \lim_{k \to +\infty} \EE\left( \sup_{0 \le t \le T} \|x^k_t -x_t\|_r^q \right), \quad \forall T > 0,
$$
which generalizes the result \cite[Theorem $3.4$]{LLMS23}. We then investigate
$$
\lim_{\Delta \to 0} \sup_{k \ge 1} \EE\left( \sup_{0 \le t \le T} \|X^{k,\Delta}_t -x^k_t\|_r^q \right), \quad \forall T > 0.
$$
Finally, by applying the triangle inequality, the strong convergence of the 
numerical segment process follows. To begin, we impose the following assumption.

\begin{assp}\label{a3.1}
   There are probability measure $\mu_2 \in \CP_{r}$, positive constant $\bar K_R$ for each $R > 0$ such that
   $$
   |f(\phi)-f(\varphi)|  \le \bar K_R \int_{-\infty}^0 |\phi(u)-\varphi(u)| \mu_2(\d u)
   $$
   for those $\phi, \varphi \in \CC_r$ with $\|\phi\|_r \vee \|\varphi\|_r \le R$.
\end{assp}

\begin{assp}\label{a3.1'}
   There exists positive constant $\bar K'_R$ for each $R > 0$ such that
   $$
   |g(\phi)-g(\varphi)|  \le \bar K'_R \int_{-\infty}^0 |\phi(u)-\varphi(u)| \mu_2(\d u)
   $$
   for those $\phi, \varphi \in \CC_r$ with $\|\phi\|_r \vee \|\varphi\|_r \le R$.
\end{assp}

Under Assumptions \ref{a3.1} and \ref{a3.1'}, we have
$$
      |f(\phi)-f(\varphi)| \vee |g(\phi)-g(\varphi)| \le \left( \bar K_R \vee \bar K'_R \right) \mu_2^{(r)} \|\phi-\varphi\|_r
$$
for any $\phi, \varphi \in \CC_r$ satisfying $\|\phi\|_r \vee \|\varphi\|_r \le R$,
which implies that Assumptions \ref{a2.1-f} and \ref{a2.1-g} hold. Consequently, Theorem \ref{th2.3} remains valid under Assumptions \ref{a2.2}, \ref{a3.1} and \ref{a3.1'}. Theorem $3.4$ in \cite{LLMS23} focuses on the convergence of the solution process $x^k(t)$ to $x(t)$, while the following theorem establishes the convergence of the segment process $x^k_t$ to $x_t$ in $L^q$ for any $q > 0$.

\begin{theorem}\label{th3.2}
    Let Assumptions \ref{a2.2}, \ref{a3.1} and \ref{a3.1'} hold with $\mu_2 \in \CP_{b}$ for some $b > r$. Then for any $q > 0$,  
    \begin{equation}
    \lim_{k \to \infty}\mathbb{E} \Big(  \sup_{0 \le t \le T}\| x^k_t - x_t\|_r^q \Big)= 0, \quad \forall T >0.
    \end{equation}
\end{theorem}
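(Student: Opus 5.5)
We prove Theorem \ref{th3.2} by a localization argument. The moment bounds of Theorems \ref{th2.3} and \ref{th2.3'} let us pass from $q=2$ (or any fixed $q\ge 2$) to general $q$. Set $e^k(t)=x^k(t)-x(t)$. Since $x^k_0=x_0=\xi$, we have $e^k(u)=0$ for $u\le 0$, and hence
$$
\|x^k_t-x_t\|_r=\sup_{-t\le u\le 0}e^{ru}|e^k(t+u)|\le \sup_{0\le s\le t}|e^k(s)|.
$$
It therefore suffices to show $\EE\big(\sup_{0\le t\le T}|e^k(t)|^q\big)\to 0$.

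By Hölder's inequality, together with the uniform $p$-th moment bounds of Theorems \ref{th2.3} and \ref{th2.3'} for all $p$, it is enough to treat $q=2$. Indeed, for $q<2$ we use Lyapunov's inequality. For $q>2$ we write $|e|^q=|e|^{1/2}|e|^{q-1/2}$ and apply Cauchy--Schwarz, so that $\EE\sup|e|^q\le(\EE\sup|e|)^{1/2}\cdot C_{T,\xi}$.

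Fix $h>\|\xi\|_r$ and let $\sigma_h=\tau_h\wedge\tau^k_h$. For $t\le\sigma_h$ both segments satisfy $\|x_t\|_r\vee\|x^k_t\|_r\le h\vee\|\xi\|_r=h$. The projected segment $\pi_k(x_t)$ has the same norm bound, because $e^{ru}|x(t-k)|\le e^{-rk}|x(t-k)|\le\|x_t\|_r$ for $u<-k$.

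On $[0,\sigma_h]$ we split
$$
f_k(x^k_s)-f(x_s)=\big[f(\pi_k x^k_s)-f(\pi_k x_s)\big]+\big[f(\pi_k x_s)-f(x_s)\big],
$$
and treat $g$ in the same way. By Assumptions \ref{a3.1} and \ref{a3.1'}, the first bracket is bounded by $\bar K_h\mu_2^{(r)}\sup_{v\le s}|e^k(v)|$. The second bracket is bounded by
$$
\bar K_h\int_{-\infty}^{-k}|x(s+u)-x(s-k)|\,\mu_2(\d u)\le 2\bar K_h\|x_s\|_r\int_{-\infty}^{-k}e^{-ru}\mu_2(\d u).
$$
Because $\mu_2\in\CP_b$ with $b>r$,
$$
\int_{-\infty}^{-k}e^{-ru}\mu_2(\d u)\le e^{-(b-r)k}\mu_2^{(b)},
$$
so the second bracket is at most $C_h\,h\,e^{-(b-r)k}$. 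Applying Itô's formula to $|e^k(t\wedge\sigma_h)|^2$, together with Burkholder--Davis--Gundy and Gronwall's inequality, gives
$$
\EE\sup_{0\le t\le T}|e^k(t\wedge\sigma_h)|^2\le C_{h,T}\,e^{-2(b-r)k}.
$$

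On the complement event $\{\sigma_h\le T\}$ we use Young's inequality in the standard Higham--Mao--Stuart fashion:
$$
\EE\Big[\sup|e^k|^2\mathbf 1_{\{\sigma_h\le T\}}\Big]\le\frac{2\delta}{p}\,\EE\sup|e^k|^p+\frac{p-2}{p\,\delta^{2/(p-2)}}\,\PP(\sigma_h\le T),
$$
with $p>2$. The moments are uniform in $k$ by Theorems \ref{th2.3} and \ref{th2.3'}. By the tail estimates in those theorems, $\PP(\sigma_h\le T)\le 2C_{T,\xi}h^{-p}$, uniformly in $k$.

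To conclude, given $\epsilon>0$ we first choose $\delta$ small, then $h$ large, and finally $k$ large. This yields the result.

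The main obstacle is the tail term $f(\pi_k x_s)-f(x_s)$. Controlling it requires comparing $\mu_2$'s tail against the weight $e^{-ru}$ coming from the $\|\cdot\|_r$-norm, and this is exactly where the hypothesis $b>r$ enters. A secondary point needing care is that $\pi_k$ preserves the localization ball, which was checked above.
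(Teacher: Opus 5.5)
Your proposal is correct and follows essentially the same route as the paper: you localize with $\sigma_h=\tau_h\wedge\tau^k_h$ and bound $\|x^k_t-x_t\|_r$ by $\sup_{0\le s\le t}|e^k(s)|$; you then control the bad event by Young's inequality together with the uniform tail bounds of Theorems~\ref{th2.3} and~\ref{th2.3'}, get an $e^{-(b-r)k}$ rate on the good event, and choose $\delta$, then $h$, then $k$. The differences are cosmetic: the paper works directly with general $q$ (taking $k\ge T$) and cites \cite{LLMS23} for the good-event estimate, whereas you reduce to $q=2$ via Cauchy--Schwarz and derive that estimate yourself by splitting off the tail term $f(\pi_k x_s)-f(x_s)$ and using $\mu_2\in\CP_b$.
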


\begin{proof}
Let $T > 0$ be arbitrary. For any $h > \|\xi\|_r$ and any integer $k \ge T$,  let 
 $\sigma^k_h := \tau_h \wedge \tau^k_h$,
$$e^k(t) = x^k(t) - x(t), \quad \forall t \in (-\infty, T]$$
and 
$$
      e^k_t = x^k_t - x_t, \quad \forall t \in [0, T],
$$
where $\tau_h$ and $\tau^k_h$ are defined in Theorems \ref{th2.3} and \ref{th2.3'}, respectively. Note that $e^k(u) = 0$ for all $u \le 0$.
For any $\delta > 0$, $t \in [0, T]$ and $p > q$, by Young's inequality we obtain
$$
\begin{aligned}
      \EE \left( \sup_{0 \le t \le T} \| e^k_t\|_r^q \right)  & =  \EE \left( \sup_{0 \le t \le T} \| e^k_t\|_r^q \II_{\{ \sigma^k_h > T \}} \right)  +  \EE \left( \sup_{0 \le t \le T} \| e^k_t\|_r^q  \II_{\{ \sigma^k_h \le T\}} \right)  \\
      & \le \EE \left( \sup_{0 \le t \le T} \| e^k_t\|_r^q \II_{\{ \sigma^k_h > T \}} \right)  +  \frac{q\delta}{p} \EE \left( \sup_{0 \le t \le T} \| e^k_t\|_r^p \right)  +  \frac{p-q}{p\delta^{q/(p-q)} } \PP\{\sigma^k_h \le T\}.
\end{aligned}
$$
For any $t \in [0, T]$,
\begin{equation}\label{3-3}
\begin{aligned}
      \|e^k_t\|_r = \sup_{u \le 0} \left(e^{ru}|e^k(t+u)| \right) = \sup_{-\infty< u \le t} \left( e^{r(u-t)} |e^k(u)| \right)  
      \le \sup_{0 \le u \le t} |e^k(u)|.  
\end{aligned}
\end{equation}
Combining this with Theorems \ref{th2.3} and \ref{th2.3'}, Assumptions \ref{a3.1} and \ref{a3.1'}, and the techniques used in the proof of \cite[$(3.29)$]{LLMS23}, we obtain that for any $k \ge T$,
$$
    \EE \left( \sup_{0 \le t \le T} \| e^k_t\|_r^q \II_{\{ \sigma^k_h > T \}} \right) \le \EE \left( \sup_{0 \le t \le T}|e^k(t)|^q \II_{\{ \sigma^k_{h} > T \}} \right) \le C_{T,h} e^{-q(b-r)k  }.
$$
Therefore,
$$
      \EE \left( \sup_{0 \le t \le T} \| e^k_t\|_r^q \right) 
       \le C_{T,h} e^{-q(b-r)k  }  +  \frac{q\delta}{p} \EE \left( \sup_{0 \le t \le T} \| e^k_t\|_r^p \right)  +  \frac{p-q}{p\delta^{q/(p-q)} } \PP\{\sigma^k_h \le T\}.
$$
Now let $\varepsilon > 0$ be arbitrary. 
It follows from Theorems \ref{th2.3} and \ref{th2.3'} that we can choose $\delta>0$ sufficiently small so that
$$
\frac{q\delta}{p} 
\EE \left( \sup_{0 \le t \le T} \| e^k_t\|_r^p \right) \le \frac{\varepsilon}{3}.
$$
Next, choose $h$ sufficiently large satisfying
$$
  \frac{p-q}{p\delta^{q/(p-q)} } \PP\{\sigma^k_{h} \le T\} \le \frac{C_{T,\xi}}{\delta^{q/(p-q)} h^p} \le \frac{\varepsilon}{3}.
$$
Finally, choose $k \ge T$ sufficiently large such that
$$
    \EE \left( \sup_{0 \le t \le T} \| e^k_t\|_r^q \II_{\{ \sigma^k_{h} > T \}} \right)  \le \frac{\varepsilon}{3}.
$$
The proof is complete.
\end{proof}
$\hfill\square$

Building on Theorem \ref{th3.2}, we now show that the numerical segment process converges to the exact segment process associated with \eqref{TSFDE}. 
To this end, we introduce an auxiliary process $Z^{k,\Delta}(t)$ together with the associated segment process $Z^{k,\Delta}_t$. First, we estimate the difference between $Z^{k,\Delta}_t$ and $X^{k,\Delta}_t$ (see Lemma \ref{l3.6}). 
Next, we analyze the error between $Z^{k,\Delta}_t$ and $x^k_t$, which in turn yields the convergence of $X^{k,\Delta}_t$ to $x^k_t$ (see Lemma \ref{lemma3.6}).

Define $Z^{k,\D}(t)$ as follows:
\begin{equation}\label{ap}
\left\{
\begin{aligned}
&Z^{k,\D}(t) = \xi(t), \quad t < 0,\\
&Z^{k,\D}(t) = X^{k,\D}(t_j) + f(X^{k,\D}_{t_j})(t-t_j) + g(X^{k,\D}_{t_j})(B(t)-B(t_j)),\ t \in [t_j, t_{j+1}),\ j \in \NN.\\
\end{aligned}\right.
\end{equation}
Observe that
\begin{equation}\label{5-12}
Z^{k,\D}(t_j) = X^{k,\D}(t_j), \quad \lim_{t\uparrow t_j}Z^{k,\D}(t) = Y^{k,\D}(t_j),\quad \forall j \ge 0.
\end{equation}
For any $\Delta_1 \in (0,1]$ and $\Delta \in (0, \Delta_1]$, define the stopping time
\begin{equation}\label{rh}
\tau^k_{\Delta,\Delta_1} = \inf \left\{t\ge0 : \big|Z^{k,\D}(t)\big| \ge \Lambda^{-1}(L \Delta_1^{-\theta}) \right\}.
\end{equation}
It is important to note that $Z^{k,\D}(t)$ may be discontinuous on $(-\infty,T]$ while it is continuous on $(-\infty, \tau^k_{\D,\Delta_1})$. As a result, when $\tau^k_{\D,\Delta_1} > 0$, we have
\begin{equation}\label{ap-con}
Z^{k,\D}(t)  
=  \xi(0) + \int_0^{t} f (X^{k,\D}_s)\d s + \int_0^{t} g(X^{k,\D}_s)\d B(s), \quad \forall t \in [0, \tau^k_{\D,\Delta_1}),
\end{equation}
where $X^{k,\Delta}_s$ is defined in \eqref{step}.
Furthermore, one can verify from \eqref{5-12} that 
$$|Y^{k,\D}(t)| \le \Lambda^{-1}(L  \D_1^{-\theta}), \quad \forall~ 0 \le t \le \tau^k_{\D,\D_1}. $$
For any $t \in [0,T]$, set
$$
Z^{k,\D}_t(u)=\left\{
\begin{aligned}
& Z^{k,\D}(t+u),\quad u \in [-k,0],\\
& Z^{k,\D}(t-k), \quad u \in (-\infty, -k).
\end{aligned}\right.
$$

\begin{lemma}\label{l3.5}
Let Assumptions \ref{a2.1-f}, \ref{a2.1-g} and \ref{a2.2} hold. Then, for any $p > 0$, $\Delta_1 \in (0, 1]$ and $\Delta \in (0, \Delta_1]$, 
$$
\sup_{k \ge 1} \sup_{0 < \D \le \Delta_1} \mathbb{P}\{\tau^k_{\Delta,\Delta_1}\le T\} \le \frac{C_{T,\xi}}{\big(\Lambda^{-1}(L \Delta_1^{-\theta})\big)^p },\quad \forall T > 0.
$$
Moreover, 
$$
\sup_{k \ge 1}\sup_{0 < \D \le 1}\EE \left( \sup_{0 \le t\le T} |Z^{k,\D}(t)|^p \right)  \le C_{T,\xi} \quad \forall T > 0,
$$
where $C_{T,\xi}$ is defined as in Theorem \ref{th2.3}.
\end{lemma}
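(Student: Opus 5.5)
I will prove the moment bound first and then derive the stopping-time estimate from it via Chebyshev's inequality. By the Lyapunov inequality it suffices to treat $p\ge 2$.

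\emph{Step 1: pathwise decomposition.} For $t\in[t_j,t_{j+1})$ with $t\le T$, \eqref{ap} gives
$$
|Z^{k,\Delta}(t)|\le |X^{k,\Delta}(t_j)|+|f(X^{k,\Delta}_{t_j})|\Delta+|g(X^{k,\Delta}_{t_j})|\,|B(t)-B(t_j)|.
$$
By \eqref{f-lin} and $\theta\le 1/2$, the drift term is bounded by $L\Delta^{1-\theta}(1+\|X^{k,\Delta}_{t_j}\|_r)\le L(1+\|X^{k,\Delta}_{t_j}\|_r)$. By Assumption \ref{a2.2}, $|g(X^{k,\Delta}_{t_j})|\le C(1+\|X^{k,\Delta}_{t_j}\|_r)$. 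Taking the supremum over $t\in[0,T]$ therefore yields
$$
\sup_{0\le t\le T}|Z^{k,\Delta}(t)|\le C\Big(1+\sup_{0\le t\le T}\|X^{k,\Delta}_t\|_r\Big)\Big(1+\max_{0\le j\le \lfloor T/\Delta\rfloor}\sup_{t_j\le s\le t_{j+1}}|B(s)-B(t_j)|\Big).
$$

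\emph{Step 2: moments.} I raise this bound to the $p$-th power, take expectations, and apply Cauchy--Schwarz. This reduces the estimate to two factors. The first is $\EE\sup_{t\le T}\|X^{k,\Delta}_t\|_r^{2p}$, which is bounded by Theorem \ref{th3.1}, uniformly in $k$ and in $\Delta\in(0,1]$. The second is $\EE\max_j\sup_{s}|B(s)-B(t_j)|^{2p}$, where the maximum of Brownian increments over roughly $T/\Delta$ subintervals must be bounded uniformly in $\Delta$.

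This second factor is the main obstacle: the crude union bound gives a factor $\Delta^{-1}\Delta^{p}$, which is only harmless when $p$ is large. I would avoid that bound altogether by using independence instead of a maximum. Since $g(X^{k,\Delta}_{t_j})$ is $\mathcal F_{t_j}$-measurable, I write
$$
|g(X^{k,\Delta}_{t_j})|^p\sup_{s\in[t_j,t_{j+1}]}|B(s)-B(t_j)|^p\le \sum_{i}|g(X^{k,\Delta}_{t_i})|^p\sup_{s}|B(s)-B(t_i)|^p,
$$
bounding the maximum over $j$ by the sum over $i$. Taking expectations and conditioning on $\mathcal F_{t_i}$, each term contributes $C\Delta^{p/2}\,\EE|g(X^{k,\Delta}_{t_i})|^p$. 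Summing gives $CT\Delta^{p/2-1}$ times a quantity bounded by Theorem \ref{th3.1}, and this factor is uniformly bounded for $p\ge2$ and $\Delta\le1$. The drift and $|X^{k,\Delta}(t_j)|$ terms are handled directly by Theorem \ref{th3.1}, since $|X^{k,\Delta}(t_j)|\le\|X^{k,\Delta}_{t_j}\|_r$. The resulting constant keeps the form $C_{T,\xi}$ of Theorem \ref{th2.3}, because the extra factors are at most polynomial in $T$ and can be absorbed into $e^{CT}$ or $e^{CT^{p/2}}$.

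\emph{Step 3: stopping time.} On the event $\{\tau^k_{\Delta,\Delta_1}\le T\}$, the definition \eqref{rh} gives $\sup_{0\le t\le T}|Z^{k,\Delta}(t)|\ge \Lambda^{-1}(L\Delta_1^{-\theta})$. This uses that $Z^{k,\Delta}$ is right-continuous, so the infimum in \eqref{rh} is attained in the sense $|Z^{k,\Delta}(\tau)|\ge\cdot$, or, if it is approached only from the left, via a limit of $|Z^{k,\Delta}|$. In either case the supremum bound holds. Chebyshev's inequality combined with Step 2 then gives
$$
\PP\{\tau^k_{\Delta,\Delta_1}\le T\}\le \frac{C_{T,\xi}}{\big(\Lambda^{-1}(L\Delta_1^{-\theta})\big)^p},
$$
uniformly in $k\ge1$ and $\Delta\in(0,\Delta_1]$. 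This completes both claims of Lemma \ref{l3.5}.
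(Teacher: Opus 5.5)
Your proof is correct. The paper omits the argument and only points to the techniques of Theorem~\ref{th3.1} and \cite[Lemma 3.3]{LLM}. Your route shows that nothing beyond Theorem~\ref{th3.1} is actually needed.

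\begin{itemize}
\item[$(\romannumeral1)$] The one-step pathwise bound from \eqref{ap}, \eqref{f-lin} and Assumption~\ref{a2.2}, combined with the independence of the Brownian increment on $[t_i,t_{i+1}]$ from $\mathcal F_{t_i}$, gives the supremum moment of $Z^{k,\Delta}$. This bound is uniform in $k\ge1$ and $\Delta\in(0,1]$.
\item[$(\romannumeral2)$] The stopping-time estimate then follows from Chebyshev's inequality. The resulting constant is automatically independent of $\Delta_1$.
\end{itemize}

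The alternative is to prove the probability estimate directly for the stopped process $Z^{k,\Delta}(t\wedge\tau^k_{\Delta,\Delta_1})$, using \eqref{ap-con} and It\^o's formula, as is customary in the truncated EM literature. That route requires redoing the Lyapunov-type estimates from Assumption~\ref{a2.2} for $Z^{k,\Delta}$. Yours is shorter and reuses Theorem~\ref{th3.1} directly. Its only requirement is that Theorem~\ref{th3.1} holds with the supremum inside the expectation, which is the case here.

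Two small remarks. First, the ``obstacle'' in Step 2 is not a real one. For $p\ge2$ the crude sum bound already gives $\EE\max_j\sup_{s\in[t_j,t_{j+1}]}|B(s)-B(t_j)|^{2p}\le C(T+1)\Delta^{p-1}\le C(T+1)$, so your first Cauchy--Schwarz version also works. Second, no left-limit case can occur in Step 3. Since $Z^{k,\Delta}$ is right-continuous, the infimum in \eqref{rh} is attained, so $|Z^{k,\Delta}(\tau^k_{\Delta,\Delta_1})|\ge\Lambda^{-1}(L\Delta_1^{-\theta})$ whenever $\tau^k_{\Delta,\Delta_1}<\infty$.
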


Lemma \ref{l3.5} follows from the techniques used in the proofs of Theorem \ref{th3.1} and \cite[Lemma 3.3]{LLM}, so we omit its proof. The following lemma provides a bound on the difference between $Z^{k,\Delta}_t$ and $X^{k,\Delta}_t$, which plays a key role in establishing the strong convergence of the TEM numerical segment process.

\begin{lemma}\label{l3.6}
Let Assumptions \ref{a2.1-f}, \ref{a2.1-g} and \ref{a2.2} hold and $\xi$ be uniformly continuous on $\RR_-$. 
Then for any $\Delta \in (0, 1]$, $q > 0$, $\gamma > 0$ and $T > 0$, 
$$
\sup_{k \ge 1} \sup_{0 \le t \le T}  \EE \left( \sup_{u \le 0} \left( e^{qru} | Z^{k,\Delta}_t(u) -  X^{k,\Delta}_t(u) |^{q} \right) \right) \\
\le  C_{T,\xi}  \left( \Delta^{\frac{q}{2}-\varepsilon} + h_\xi^q(\Delta) + \left( \Lambda^{-1}(L \Delta^{-\theta}) \right)^{-\gamma} \right),
$$
for all $\varepsilon \in (0, q/2)$, 
where 
$$ h_\xi(u) := \sup_{|t-s| \le u; t,s \in \RR_-} |\xi(t) - \xi(s)|, \quad \forall u \ge 0.  $$
and $C_{T,\xi}$ is defined as in Theorem \ref{th2.3}.
\end{lemma}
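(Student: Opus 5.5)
The plan is to fix $t\in[t_j,t_{j+1})$ with $t\le T$, so that $X^{k,\Delta}_t=X^{k,\Delta}_{t_j}$. I would then compare $Z^{k,\Delta}_t(u)=Z^{k,\Delta}(t+u)$ (for $u\ge -k$) with the piecewise linear interpolation \eqref{LI} of the nodes $X^{k,\Delta}(t_{j+m})$, splitting according to where $t+u$ lies. First I would localize on the event $\Omega_1=\{\tau^k_{\Delta,\Delta}>T\}$. Off $\Omega_1$, H\"older's inequality applies with an exponent $p$ large, and the moment bounds of Theorem \ref{th3.1} and Lemma \ref{l3.5} control $\EE\sup_{u\le0}e^{pru}|Z^{k,\Delta}_t(u)-X^{k,\Delta}_t(u)|^{p}$. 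Lemma \ref{l3.5} (with $\Delta_1=\Delta$ and an arbitrarily large moment order) gives $\PP(\Omega_1^c)\le C_{T,\xi}(\Lambda^{-1}(L\Delta^{-\theta}))^{-\gamma'}$ for any $\gamma'$. Choosing $\gamma'$ appropriately yields the term $(\Lambda^{-1}(L\Delta^{-\theta}))^{-\gamma}$. On $\Omega_1$, by \eqref{5-12} and the definition of $\tau^k_{\Delta,\Delta}$, the truncation $\Pi^\Delta$ acts as the identity at positive nodes, so $X^{k,\Delta}(t_{i+1})=Y^{k,\Delta}(t_{i+1})=\lim_{s\uparrow t_{i+1}}Z^{k,\Delta}(s)$.

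Next I would treat the three regions. (a) When $t+u\ge0$, say $t+u\in[t_i,t_{i+1}]$ with $0\le i\le j$, we have $Z^{k,\Delta}(t_i)=X^{k,\Delta}(t_i)$. Hence the difference is at most $2\sup_{s\in[t_i,t_{i+1}]}|Z^{k,\Delta}(s)-X^{k,\Delta}(t_i)|$ plus a correction of the same type from the shift between $t$ and $t_j$. Each such quantity is bounded by $|f(X^{k,\Delta}_{t_i})|\Delta+|g(X^{k,\Delta}_{t_i})|\sup_{s\in[t_i,t_{i+1}]}|B(s)-B(t_i)|$. By \eqref{f-lin} with $\theta\le 1/2$, Assumption \ref{a2.2} for $g$, and independence of the Brownian increments from $\CF_{t_i}$, the $p$-th moment of the $i$-th term is at most $C(1+\EE\|X^{k,\Delta}_{t_i}\|_r^{p})\Delta^{p/2}$. (b) When $-k\le u$ and $t+u<0$, both $Z^{k,\Delta}$ and the interpolation are built from $\xi$, so the difference is at most $2h_\xi(\Delta)$ up to the effect of $\Pi^\Delta$ on the negative nodes. (c) When $u<-k$, both segments are constant, equal to $Z^{k,\Delta}(t-k)$ and $X^{k,\Delta}(t_j-k)$ respectively, and these are handled by (a) or (b) at the single time $t-k$. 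In all regions the weight satisfies $e^{qru}\le 1$.

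The main obstacle is the supremum over $u$ in region (a): there are about $T/\Delta$ subintervals, and a naive union or summation loses a factor $\Delta^{-1}$. I would avoid this by the standard maximal trick. I would write $\EE\max_i|\eta_i|^q\le(\sum_i\EE|\eta_i|^{p})^{q/p}\le (C_{T,\xi}T\Delta^{-1}\Delta^{p/2})^{q/p}=C_{T,\xi}\Delta^{q/2-q/p}$, and take $p$ large so that $q/p\le\varepsilon$. This is exactly the source of the $\Delta^{q/2-\varepsilon}$ rate, and the constants are uniform in $k$ because Theorem \ref{th3.1} is. A secondary delicate point is the truncation of the initial nodes $X^{k,\Delta}(t_m)=\Pi^\Delta(\xi(t_m))$ for $m<0$ in region (b). There I would bound $e^{ru}|\xi(s)-\Pi^\Delta(\xi(s))|\le e^{ru}|\xi(s)|\mathbf 1_{\{|\xi(s)|>\Lambda^{-1}(L\Delta^{-\theta})\}}$ and absorb it into the $(\Lambda^{-1})^{-\gamma}$ term via a Chebyshev-type estimate using $\|\xi\|_r$. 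This needs care, since $|\xi|$ may grow like $e^{-ru}$, and I expect to need the weight $e^{ru}$, together with the uniform continuity of $\xi$, to make this step work.
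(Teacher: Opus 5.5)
Your proof is correct. Its region decomposition matches the paper's $E_1,\dots,E_6$ split: your (b) is $E_1,E_3$, your (a) is $E_2,E_5,E_6$, and the region $u<-k$ reduces to $u=-k$ as you say. It differs from the paper in two technical choices.

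First, the localization on $\{\tau^k_{\Delta,\Delta}>T\}$ is not needed. $\Pi^\Delta$ is the metric projection onto the closed ball of radius $\Lambda^{-1}(L\Delta^{-\theta})$, so it is $1$-Lipschitz, and $X^{k,\Delta}(t_i)$ already lies in that ball. Hence, pathwise,
$$|X^{k,\Delta}(t_{i+1})-X^{k,\Delta}(t_i)|\le|Y^{k,\Delta}(t_{i+1})-X^{k,\Delta}(t_i)|=|f(X^{k,\Delta}_{t_i})\Delta+g(X^{k,\Delta}_{t_i})\Delta B_i|.$$
This is how the paper bounds $E_2$ without any stopping time. As a result, its $(\Lambda^{-1}(L\Delta^{-\theta}))^{-\gamma}$ term comes only from the truncated initial nodes. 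In your route it also comes from $\PP\{\tau^k_{\Delta,\Delta}\le T\}$, via Lemma \ref{l3.5} and a H\"older step. That is valid, just heavier.

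Second, for the supremum over the $O(T/\Delta)$ subintervals, the paper applies H\"older to separate $\sup_m|g(X^{k,\Delta}_{t_m})|$ from the Brownian increments. It then uses a moment bound for the modulus of continuity of $B$. You instead sum $p$-th moments, using independence of the increment from $\mathcal{F}_{t_i}$. Your version is more elementary and gives the same $\Delta^{q/2-\varepsilon}$.

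Two details need filling in.

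\begin{itemize}
\item The mixed case $t+u\in[0,\Delta)$, where $X^{k,\Delta}_t(u)$ interpolates $X^{k,\Delta}(t_{-1})=\Pi^\Delta(\xi(-\Delta))$ and $X^{k,\Delta}(t_0)$, is the paper's $E_4$. Your ``correction'' there is of type (b), not (a). It is bounded by $h_\xi(\Delta)$, again by non-expansiveness of $\Pi^\Delta$.
\item Your truncation step in (b) closes as you anticipate. Uniform continuity gives $|\xi(s)|\le C_\xi(1+|s|)$, so
$$|\xi(s)-\Pi^\Delta(\xi(s))|^q\le 2^q|\xi(s)|^{q+\gamma}\big(\Lambda^{-1}(L\Delta^{-\theta})\big)^{-\gamma}.$$
The relevant nodes satisfy $|s|\le|u|+\Delta$, so $e^{qru}(1+|s|)^{q+\gamma}$ stays bounded. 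The bound via $\|\xi\|_r$ alone would not suffice here.
\end{itemize}
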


\begin{proof}
By Lyapunov’s inequality, it suffices to prove the theorem for $p>q>2$. Let $k\ge1$, $T>0$, and $\Delta \in (0,1]$ be fixed arbitrarily. For any $t \in [0,T]$, let $j\ge0$ be the integer such that $t \in [t_j, t_{j+1})$.
Define
	$$
	\begin{aligned}
	& \Omega_{1,m} =  \left\{v \mid  t+v \in [t_{j+m}, t_{j+m+1})  \right\} \cap [t_m,t_{m+1}), \\
	& \Omega_{2,m} = \left\{u \mid   t+u \in [t_{j+m+1}, t_{j+m+2}) \right\} \cap [t_m,t_{m+1})\\
	\end{aligned}
	$$
	for any $-kl \le m \le -1$. It is clear that $[-k, 0) = \cup_{-kl \le m \le -1} \left( \Omega_{1,m} \cup \Omega_{2,m} \right) $.
	Hence, 
	\begin{align*}
	& \EE \bigg( \sup_{-k \le u \le 0} e^{qru} |Z^{k,\Delta}_t(u) - X^{k,\Delta}_t(u) |^q  \bigg) \\
	= & \EE \bigg( \sup_{-k  \le u < 0}  e^{qru} |Z^{k,\Delta}_t(u) - X^{k,\Delta}_t(u) |^q  \bigg) 
	+ \EE  |Z^{k,\Delta}_t(0) - X^{k,\Delta}_t(0) |^q   \\
    \le & E_1 + E_2 + E_3 + E_4 + E_5 + E_6, \\
    \end{align*}
    where
    \begin{align*}
    E_1 &= \EE \bigg( \sup_{u \in \cup_{-kl \le m \le  -j-1} \Omega_{1,m} } e^{qru} | Z^{k,\Delta}_t(u) -  X^{k,\Delta}_t(u) |^q \bigg), \\
	E_2 & =\EE \bigg(  \sup_{u \in \cup_{-j \le m \le -1}  \Omega_{1,m}} e^{qru} |  Z^{k,\Delta}_t(u) -  X^{k,\Delta}_t(u) |^q  \bigg),   \\
  E_3 & = \EE \bigg(  \sup_{u \in \cup_{-kl \le m \le-j-2} \Omega_{2,m}} e^{qru} | Z^{k,\Delta}_t(u) - X^{k,\Delta}_t(u) |^q  \bigg), \\
	 E_4 & = \EE \bigg( \sup_{ u \in \Omega_{2, -j-1}} e^{qru} |  Z^{k,\Delta}_t(u) -  X^{k,\Delta}_t(u) |^q  \bigg),  \\
	 E_5 & = \EE \bigg(  \sup_{u \in \cup_{-j \le m \le -1}\Omega_{2,m}} e^{qru} | Z^{k,\Delta}_t(u) -  X^{k,\Delta}_t(u) |^q \bigg),  \\
	 E_6 & = \EE  | Z^{k,\Delta}(t) - X^{k,\Delta}(t_j) |^q.  
    \end{align*}
In what follows, we estimate $E_1, \dots, E_6$.
	
\textbf{Estimate $E_1$ and $E_3$:} For any $-kl \le m \le -j-1$ and $u \in \Omega_{1,m}$, by Jensen’s inequality, \eqref{LI} and \eqref{ap}, we obtain
	$$
	\begin{aligned}
	&  |Z^{k,\Delta}_t(u)-X^{k,\Delta}_t(u)|^q = |Z^{k,\Delta}(t+u) - X^{k,\Delta}_{t_j}(u)|^q  \\
	=&  \left|\xi(t+u) - \frac{t_{m+1}-u}{\Delta} \Pi^{\Delta}\left( \xi(t_{j+m}) \right) -  \frac{u-t_m}{\Delta} \Pi^{\Delta} \left( \xi(t_{j+m+1}) \right) \right|^q \\
	\le & \frac{t_{m+1}-u}{\Delta}  \left|\xi(t+u)- \Pi^{\Delta}(\xi(t_{j+m})) \right|^q + \frac{u-t_m}{\Delta}   \left|\xi(t+u)- \Pi^{\Delta}( \xi(t_{j+m+1})) \right|^q \\
	\le & C \Big( \left|\xi(t+u)- \xi(t_{j+m}) \right|^q +  \left|\xi(t_{j+m})- \Pi^{\Delta}(\xi(t_{j+m})) \right|^q \\
	& +   \left|\xi(t+u)- \xi(t_{j+m+1}) \right|^q +   \left|\xi(t_{j+m+1})- \Pi^{\Delta}( \xi(t_{j+m+1})) \right|^q \Big) \\
	\le & C \left( h^q_\xi(\Delta) + \left|\xi(t_{j+m})- \Pi^{\Delta}(\xi(t_{j+m})) \right|^q + \left|\xi(t_{j+m+1})- \Pi^{\Delta}( \xi(t_{j+m+1})) \right|^q \right).
	\end{aligned}
	$$
Moreover, for any $\gamma>0$ and $u \le 0$, by the uniform continuity of $\xi$,
$$
\begin{aligned}
|\xi(u) - \Pi^{\Delta}(\xi(u))|^q 
= & |\xi(u) - \Pi^{\Delta}(\xi(u))|^q \II_{\{|\xi(u)| \ge \Lambda^{-1}(L\Delta^{-\theta})\}} \\
\le & \frac{2^q|\xi(u)|^{q+\gamma}}{\left(\Lambda^{-1}(L\Delta^{-\theta})\right)^\gamma}  \le \frac{C_\xi  \left( 1 + |u|^{q+\gamma} \right)}{\left(\Lambda^{-1}(L\Delta^{-\theta})\right)^\gamma}
\end{aligned}
$$
Hence, using $u \in \Omega_{1,m} \subset [t_m, t_{m+1})$, it follows that
	$$
	\begin{aligned}
	& e^{qru} |Z^{k,\Delta}_t(u)-X^{k,\Delta}_t(u)|^q \\
	\le & C h_\xi^q(\Delta) +  \frac{C_\xi e^{qru} \left( 1 + |t_{j+m}|^{q+\gamma} + |t_{j+m+1}|^{q+\gamma+1} \right)}{\left(\Lambda^{-1}(L\Delta^{-\theta})\right)^\gamma} \\
	\le & C h_\xi^q(\Delta) +  \frac{C_{T,\xi} e^{qru} \left( 1 + |t_{m}|^{q+\gamma} + |t_{m+1}|^{q+\gamma+1} \right)}{\left(\Lambda^{-1}(L\Delta^{-\theta})\right)^\gamma} \\
	\le & C_{T,\xi} \left( h^q_\xi(\Delta) + \left( \Lambda^{-1}\left( L \Delta^{-\theta} \right) \right)^{-\gamma} \right),
	\end{aligned}
	$$
and therefore
	$$
	E_1 \le C_{T,\xi} \left( h_\xi^q(\Delta) + \left( \Lambda^{-1}\left( L \Delta^{-\theta} \right) \right)^{-\gamma} \right).
	$$
By the same argument, we also have
$$
E_3 \le C_{T,\xi} \left( h_\xi^q(\Delta) + \left( \Lambda^{-1}\left( L \Delta^{-\theta} \right) \right)^{-\gamma} \right).
$$
	
\textbf{Estimate $E_2$:} Following the argument of ``\textbf{Case $3$}" in the proof of \cite[Lemma 4.6]{LLM}, we have, for any $-j \le m \le -1$ and $u \in \Omega_{1,m}$,  
	\begin{equation*}
	\begin{aligned}
	&   e^{ru}|  Z^{k,\Delta}_t(u) -  X^{k,\Delta}_t(u)| \le |  Z^{k,\Delta}_t(u) -  X^{k,\Delta}_t(u)|  \\
	\le &  2 | f(X^{k,\Delta}_{t_{j+m}}) \Delta |   
	+   | g(X^{k,\Delta}_{t_{j+m}}) \Delta B_{j+m} | +  | g(X^{k,\Delta}_{t_{j+m}}) \left( B(t+u) - B(t_{j+m}) \right) | .
	\end{aligned}
	\end{equation*}
It follows that
	\begin{equation}\label{E2}
	\begin{aligned}
	E_2 = & \left( \sup_{-j \le m \le -1} \sup_{u \in \Omega_{1,m}}  |Z^{k,\Delta}_t(u)-X^{k,\Delta}_t(u)|^q\right) \\
	\le & 6^q  \EE \bigg( \sup_{-j \le m \le -1} | f(X^{k,\Delta}_{t_{j+m}}) \Delta |^q \bigg)  \\
	+&  3^q  \EE \bigg( \sup_{-j \le m \le -1}   | g(X^{k,\Delta}_{t_{j+m}}) \Delta B_{j+m} |^q \bigg)  \\
	+ & 3^q   \EE \bigg( \sup_{-j  \le m \le -1} \sup_{u \in \Omega_{1,m}}  | g(X^{k,\Delta}_{t_{j+m}}) \left( B(t+u) - B(t_{j+m}) \right) |^q \bigg) \\
	:= & E_{21}  +  E_{22}  +  E_{23}. 
	\end{aligned}
	\end{equation}
By \eqref{f-lin}, $\theta \in (0,1/2]$, the H${\rm \ddot{o}}$lder inequality as well as Theorem \ref{th3.1}, one obtains
	\begin{equation}\label{E21}
	E_{21} \le  (12 L)^q  \Delta^{\frac{q}{2}} \left( 1 +   \EE \left( \sup_{0 \le m \le j-1} \|X^{k,\Delta}_{t_{m}}\|_r^q  \right) \right) \le C_{T,\xi} \Delta^{\frac{q}{2}}.
	\end{equation}
	By the H${\rm \ddot{o}}$lder inequality, Assumption \ref{a2.2}, Theorem \ref{th3.1} as well as  \cite[Lemma 1.2.1]{CR81}, we derive that, for any $\varepsilon \in (0, q/2)$,
	\begin{equation}\label{E22}
	\begin{aligned}
	E_{22} \le & 3^q \bigg[ \EE \bigg( \sup_{0 \le m \le j-1}   | g(X^{k,\Delta}_{t_{m}})|^p \bigg) \bigg]^{\frac{q}{p}} 
	\bigg[ \EE \bigg( \sup_{0 \le m \le j-1} |\Delta B_{m}|^{\frac{pq}{p-q}} \bigg) \bigg]^{\frac{p-q}{p}}  \\
	\le & C \left[ 1  + \EE\left( \sup_{0 \le m \le j-1} \|X^{k,\Delta}_{t_m}\|_r^p \right) \right]^{\frac{q}{p}}  \bigg(   \EE \bigg( \sup_{s_1, s_2 \in [0,T]; s_1-s_2 \in [0,\D]}  |B(s_1) - B(s_2)|^{\frac{pq}{p-q}} \bigg)  \bigg)^{\frac{p-q}{p}}  \\
	\le & C_{T,\xi}   \Delta^{\frac{q}{2}-\varepsilon}.
	\end{aligned}
	\end{equation}
	Similarly, one also yields
	\begin{equation}\label{E23}
	E_{23} \le C_{T,\xi} \Delta^{\frac{q}{2}-\varepsilon}, \quad \forall \varepsilon \in (0, q/2).
	\end{equation}
	Substituting \eqref{E21}, \eqref{E22} and \eqref{E23} into \eqref{E2} gives
	\begin{equation*}
	E_2 \le C_{T,\xi} \Delta^{\frac{q}{2}-\varepsilon}, \quad \forall \varepsilon \in (0, q/2).
	\end{equation*}
	
	\textbf{Estimate $E_4$:} 
	Using the techniques from ``\textbf{Case $4$}" in the proof of \cite[Lemma 4.6]{LLM} and under Assumptions \ref{a2.1-f}, \ref{a2.1-g}, we obtain
	$$
	E_4 \le C_\xi \left( h^{q}_\xi(\Delta)  +  \Delta^{\frac{q}{2}-\varepsilon} \right), \quad \forall \varepsilon \in (0, q/2).
	$$ 
	
	\textbf{Estimate $E_5$ and $E_6$:}
	Similarly, using the argument applied in estimating $E_2$, we have
	$$
	E_5 \vee E_6  \le C_{T,\xi} \Delta^{\frac{q}{2}-\varepsilon},\quad \forall \varepsilon \in (0, q/2).
	$$

	In conclusion, we derive that
	$$
	\EE \bigg( \sup_{-k \le u \le 0} e^{qru} |Z^{k,\Delta}_t(u) - X^{k,\Delta}_t(u) |^q  \bigg) \le C_{T, \xi}  \left( \Delta^{\frac{q}{2}-\varepsilon} + h_\xi^q(\Delta) + \left( \Lambda^{-1}\left( L \Delta^{-\theta} \right) \right)^{-\gamma} \right).
	$$
    Note that $Z^{k,\Delta}_t(u)=Z^{k,\Delta}_t(-k)$ and $X^{k,\Delta}_t(u)=X^{k,\Delta}_t(-k)$ for all $u \le -k$. Hence, the required assertion follows from the above inequality and the fact that $C_{T,\xi}$ is independent of $k$ and $\Delta$. 
\end{proof}
$\hfill\square$

\begin{lemma}\label{lemma3.6}
	Let Assumptions \ref{a2.1-f}, \ref{a2.1-g} and \ref{a2.2} hold and the initial value $\xi$ be uniformly continuous on $\RR_-$. Then for any $q >0$,
	$$
	\lim_{\Delta \rightarrow 0, k \to +\infty}  \left( \sup_{0 \le t \le  T}  \mathbb{E}  \big\| X^{k,\Delta}_t - x^k_t\big\|_r^q \right) = 0, \quad \forall T>0.
	$$
\end{lemma}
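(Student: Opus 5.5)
We compare both processes through the auxiliary segment $Z^{k,\Delta}_t$. By the triangle inequality,
$$
\EE\|X^{k,\Delta}_t-x^k_t\|_r^q\le C\,\EE\|X^{k,\Delta}_t-Z^{k,\Delta}_t\|_r^q+C\,\EE\|Z^{k,\Delta}_t-x^k_t\|_r^q .
$$
The first term is handled by Lemma \ref{l3.6}, and the bound there is uniform in $k$ and $t\in[0,T]$. Here we need $\Pi^\Delta$ only on $[-k,0]$. Since $X^{k,\Delta}_t$ and $Z^{k,\Delta}_t$ are both constant for $u<-k$ (by \eqref{LI} and the definition of $Z^{k,\D}_t$), their $\|\cdot\|_r$-distance is controlled by the sup in Lemma \ref{l3.6}. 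The resulting bound tends to $0$ as $\Delta\to0$, because $h_\xi(\Delta)\to0$ by uniform continuity of $\xi$, and $\Lambda^{-1}(L\Delta^{-\theta})\to\infty$. So it suffices to show $\sup_{k}\EE(\sup_{0\le t\le T}\|Z^{k,\Delta}_t-x^k_t\|_r^q)\to0$ as $\Delta\to0$. Taking $\sup_k$ gives the double limit, and the $k\to\infty$ part is harmless.

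For the second term, $Z^{k,\D}_t$ and $x^k_t$ share the initial segment. By the same computation as \eqref{3-3} (with the $u<-k$ tail handled the same way), $\|Z^{k,\D}_t-x^k_t\|_r\le\sup_{0\le s\le t}|Z^{k,\D}(s)-x^k(s)|$. We localize as in Theorem \ref{th3.2}. Fix $\Delta_1$, take $\Delta\le\Delta_1$, and set $\rho=\tau^k_{\Delta,\Delta_1}\wedge\tau^k_h$ with $h=\Lambda^{-1}(L\Delta_1^{-\theta})$. Young's inequality then gives
$$
\EE\sup_{t\le T}|e(t)|^q\le\EE\sup_{t\le T\wedge\rho}|e(t)|^q+\tfrac{q\delta}{p}\EE\sup_{t\le T}|e(t)|^p+C_\delta\,\PP\{\rho\le T\}.
$$
The middle term is small uniformly in $k,\Delta$ by Lemma \ref{l3.5} and Theorem \ref{th2.3'}. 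The last term is at most $C_{T,\xi}\,(\Lambda^{-1}(L\Delta_1^{-\theta}))^{-p}$, again by Lemma \ref{l3.5} and Theorem \ref{th2.3'}.

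On $[0,\rho)$ we use \eqref{ap-con}:
$$
Z^{k,\D}(t)-x^k(t)=\int_0^t\big(f(X^{k,\D}_s)-f_k(x^k_s)\big)\d s+\int_0^t\big(g(X^{k,\D}_s)-g_k(x^k_s)\big)\d B(s).
$$
Since $X^{k,\D}_s$ is constant beyond $-k$, we have $f(X^{k,\D}_s)=f_k(X^{k,\D}_s)$. We then split
$$
f_k(X^{k,\D}_s)-f_k(x^k_s)=\big[f_k(X^{k,\D}_s)-f_k(Z^{k,\D}_s)\big]+\big[f_k(Z^{k,\D}_s)-f_k(x^k_s)\big],
$$
and similarly for $g$. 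Before $\rho$ all three segments have norm at most $R=\Lambda^{-1}(L\Delta_1^{-\theta})\vee\|\xi\|_r$, so the local Lipschitz constant $K_R\vee K'_R$ of Assumptions \ref{a2.1-f}--\ref{a2.1-g} applies. The constant depends on $\Delta_1$ but not on $k$ or $\Delta$. Since $\|\pi_k\phi-\pi_k\varphi\|_r\le\|\phi-\varphi\|_r$, the Hölder and BDG inequalities give
$$
\EE\sup_{t\le T\wedge\rho}|e(t)|^q\le C_{R,T}\int_0^T\Big(\EE\|X^{k,\D}_s-Z^{k,\D}_s\|_r^q+\EE\sup_{v\le s\wedge\rho}|e(v)|^q\Big)\d s
$$
(taking $q\ge2$, which suffices by Lyapunov). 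Gronwall's inequality together with Lemma \ref{l3.6} then yields
$$
\EE\sup_{t\le T\wedge\rho}|e|^q\le C_{\Delta_1,T,\xi}\Big(\Delta^{\frac q2-\varepsilon}+h^q_\xi(\Delta)+\big(\Lambda^{-1}(L\Delta^{-\theta})\big)^{-\gamma}\Big),
$$
uniformly in $k$.

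To conclude, given $\epsilon>0$, first choose $\delta$ small, then $\Delta_1$ small so that $\PP\{\rho\le T\}$ is negligible, and finally let $\Delta\to0$. The main obstacle is the localized Gronwall step. Lemma \ref{l3.6} is stated only pointwise in $t$ with a sup inside, so we must avoid needing $\EE\sup_t\|X-Z\|^q$. Integrating the pointwise-in-$s$ bound inside the time integral does exactly this, because the supremum has already been moved onto $e$. A second point to check is that the stopped quantities remain bounded by $R$ even though $X^{k,\D}$ involves $\Pi^\Delta$; this holds because $|X^{k,\D}(t_j)|\le|Y^{k,\D}(t_j)|\le R$ before $\tau^k_{\Delta,\Delta_1}$.
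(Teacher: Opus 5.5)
Your architecture (split through $Z^{k,\Delta}_t$, Lemma \ref{l3.6} for $X^{k,\Delta}_t-Z^{k,\Delta}_t$, localization and Gronwall for the solution-level error) is the paper's. However, the step $\|Z^{k,\Delta}_t-x^k_t\|_r\le\sup_{0\le s\le t}|Z^{k,\Delta}(s)-x^k(s)|$ is false, and your reduction to a bound uniform in $k$ falls with it. In \eqref{3-3}, both $x^k_t$ and $x_t$ are full segments of processes that coincide on $\RR_-$. Here, by contrast, $Z^{k,\Delta}_t$ is frozen beyond $-k$ while $x^k_t$ is not: the truncation $\pi_k$ in \eqref{TSFDE} acts only inside the coefficients, not on the segment. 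The correct estimate is
\[
\|Z^{k,\Delta}_t-x^k_t\|_r\le\sup_{u<-k}e^{ru}|Z^{k,\Delta}(t-k)-x^k(t+u)|+\sup_{0\le s\le t}|Z^{k,\Delta}(s)-x^k(s)|.
\]
For $t\le T<k$ the first term equals $\sup_{u<-k}e^{ru}|\xi(t-k)-\xi(t+u)|$, which does not depend on $\Delta$ at all. So the statement you reduce to is not merely unproved but false. Take $\xi(u)=u$ and $t=0$. Then $Z^{k,\Delta}_0(u)=\xi(-k)$ for $u<-k$, hence $\|Z^{k,\Delta}_0-x^k_0\|_r\ge\sup_{u<-k}e^{ru}(-k-u)=e^{-rk-1}/r$ for every $\Delta$. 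The same lower bound holds for $\|X^{k,\Delta}_0-x^k_0\|_r$ once $\Lambda^{-1}(L\Delta^{-\theta})\ge k$. For fixed $k$ the scheme can only approximate $\pi_k(x^k_t)$, so $k\to\infty$ is the essential half of the joint limit, not a harmless one.

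The repair is the paper's. Take $k>T$, so the tail involves only $\xi$, and use that uniform continuity gives $|\xi(u)|\le C_\xi(1+|u|)$. Then $\sup_{u<-k}e^{qru}|\xi(t-k)-\xi(t+u)|^q\le C_{T,\xi}e^{-(qr-\varepsilon_2)k}$ for any $\varepsilon_2\in(0,qr)$. This is a second place where uniform continuity is genuinely needed: if $\lim_{u\to-\infty}e^{ru}\xi(u)\neq0$, then at $t=0$ this tail stays at least $|\lim_{u\to-\infty}e^{ru}\xi(u)|$ for every $k$. Carry this term along, and at the end choose $\delta$, then $\Delta_1$, then $k$ large and $\Delta$ small.

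The rest of your argument is sound and coincides with the paper's. Inside the coefficients $\pi_k$ does remove the tail, since $\|\pi_k(Z^{k,\Delta}_s)-\pi_k(x^k_s)\|_r\le\sup_{0\le v\le s}|Z^{k,\Delta}(v)-x^k(v)|$. Bound this directly, not through $\|Z^{k,\Delta}_s-x^k_s\|_r$. With that, your localized Gronwall bound (the second term of \eqref{3.30}) is indeed uniform in $k$; only the outer $\|\cdot\|_r$-norm of the segment difference carries the $k$-dependence.
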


\begin{proof}
	We first prove the theorem for $q \ge 2$. The case $q \in (0,2)$ then follows directly from the Lyapunov inequality.  Let $T > 0$, $k \ge T$ be fixed arbitrarily. For any $\Delta_1 \in (0,1]$ and $\Delta \in (0,\Delta_1]$, define the stopping time
	$$
	\sigma^k_{\Delta,\Delta_1} := \tau^k_{\Lambda^{-1}(L \Delta_1^{-\theta})}  \wedge  \tau^{k}_{\Delta, \Delta_1},
	$$
	where $\tau^k_{\Lambda^{-1}(L \Delta_1^{-\theta})}$ and $\tau^{k}_{\Delta, \Delta_1}$ are defined in Theorem \ref{th2.3} and \eqref{rh},  respectively. For simplicity, 
	we write $\sigma^k_{\D,\D_1}=\sigma$. It follows from the Young inequality that, for any $\delta > 0$, $t \in [0, T]$ and $p > q$,
	\begin{equation*}
	\begin{aligned}
	\mathbb{E}  \big\| X^{k,\Delta}_t - x^{k}_t\big\|_r^q 
	\le & \mathbb{E} \left( \big\| X^{k,\Delta}_t - x^{k}_t\big\|_r^q \mathbf{1}_{\{\sigma \le T\}} \right) + \mathbb{E} \left(  \big\| X^{k,\Delta}_t - x^{k}_t\big\|_r^q \mathbf{1}_{\{\sigma > T\}} \right) \\
	\le & \frac{q \delta}{p}  \mathbb{E} \left(  \big\| X^{k,\Delta}_t - x^{k}_t\big\|_r^p \right) + \frac{p-q}{p \delta^{q/(p-q)}}  \mathbb{P}{\{\sigma \le T\}} \\
	+  & 2^{q-1} \mathbb{E} \left( \big\| X^{k,\Delta}_t -Z^{k,\D}_t\big\|_r^q \mathbf{1}_{\{\sigma > T\}} \right) +  2^{q-1}  \mathbb{E} \left(  \big\| Z^{k,\Delta}_t - x^{k}_t\big\|_r^q \mathbf{1}_{\{\sigma > T\}} \right).  \\
	\end{aligned}
	\end{equation*}
	By virtue of Theorems \ref{th2.3} and \ref{th3.1}, we obtain
	$$
	\frac{q \delta}{p}  \mathbb{E}   \big\| X^{k,\Delta}_t - x^{k}_t\big\|_r^p  \le \frac{2^{p-1} q \delta}{p} \left( \EE \big\| X^{k,\Delta}_t\big\|_r^p + \EE \big\| x^{k}_t\big\|_r^p \right)  \le C_{T,\xi}  \delta.
	$$
	It follows from Theorem \ref{th2.3} and Lemma \ref{l3.5} that
	$$
	\begin{aligned}
	\frac{p-q}{p \delta^{q/(p-q)}}  \mathbb{P}{\{\sigma \le T\}} & \le   \frac{p-q}{p \delta^{q/(p-q)}} \Big( \PP \{  \tau^k_{\Lambda^{-1}(L \Delta_1^{-\theta})}\le T  \} + \PP \{ \tau^{k}_{\Delta,\Delta_1}  \le T \} \Big) \\
	& \le  \frac{C_{T,\xi}}{\delta^{q/(p-q)} (\Lambda^{-1} \left(L \Delta_1^{-\theta})\right)^p}.
	\end{aligned}
	$$
	Utilising Lemma \ref{l3.6}, we derive that for any $\varepsilon_1 \in (0, q/2)$ and $\gamma > 0$,
	$$
	\begin{aligned}
	&  \mathbb{E} \left( \big\| X^{k,\Delta}_t -Z^{k,\D}_t\big\|_r^q \mathbf{1}_{\{\sigma > T\}} \right) \\
	= &  \EE \left( \left( \sup_{u \le 0} \left( e^{qru}  | X^{k,\Delta}_t(u) -Z^{k,\D}_t(u)|^q \right) \right)  \mathbf{1}_{\{\sigma > T\}} \right)  \\
	\le & \EE  \left( \sup_{u \le 0}  \left( e^{qru}  | X^{k,\Delta}_t(u) -Z^{k,\D}_t(u)|^q \right) \right)  \\
	\le &  C_{T,\xi}  \left( \Delta^{\frac{q}{2}-\varepsilon_1} + h_\xi^q(\Delta) + \left( \Lambda^{-1}\left( L \Delta^{-\theta} \right) \right)^{-\gamma} \right).
	\end{aligned}
	$$
	Hence,
	\begin{equation}\label{3.29}
	\begin{aligned}
	\mathbb{E}  \big\| X^{k,\Delta}_t - x^{k}_t\big\|_r^q 
	\le & C_{T,\xi} \delta  +  \frac{C_{T,\xi}}{\delta^{q/(p-q)} (\Lambda^{-1}(L \Delta_1^{-\theta}))^p} \\
	+  & C_{T,\xi} \left( \Delta^{\frac{q}{2}-\varepsilon_1} + h_\xi^q(\Delta) + \left( \Lambda^{-1}\left( L \Delta^{-\theta} \right) \right)^{-\gamma}\right) \\
	+ & \mathbb{E} \left(  \big\| Z^{k,\Delta}_t - x^{k}_t\big\|_r^q \mathbf{1}_{\{\sigma > T\}} \right).  \\
	\end{aligned}
	\end{equation}
	Furthermore, by the definition of $\|\cdot\|_r$ and $k \ge T$ that
	\begin{equation*}
	\begin{aligned}
	\big\|Z^{k,\Delta}_t-x^k_t\big\|^q_r \II_{\{\sigma > T\}} \le
	&   \sup_{u \le -k}  \left( e^{qru}   \left| \xi(t-k)   - \xi(t + u) \right|^q \right)  \\
	+ & \sup_{-k \le u \le 0}  \left( e^{qru}  \big|  Z^{k,\Delta}_t(u) - x^k_t(u)  \big|^q  \right) \II_{\{\sigma > T\}}. \\
	\end{aligned}
	\end{equation*}
	For any $\varepsilon_2 \in (0, qr)$, by the uniform continuity of $\xi$, 
	\begin{equation*}
	\begin{aligned}
	& \sup_{u \le -k} \left( e^{qru}   \left| \xi(t-k) - \xi(t + u) \right|^q \right) \\
	\le & C_\xi \left( \sup_{u \le -k}  e^{qru}\left( 1 +  |t-k|^q + |t+u|^q \right) \right) \\
	\le & C_{T,\xi} \left( \sup_{u \le -k}  e^{qru}\left( 1 +  |k|^q + |u|^q \right) \right) \\
		\le & C_{T,\xi} \left( \sup_{u \le -k}  e^{qru}\left( 1  + |u|^q \right) \right) \\
\le & C_{T,\xi} \left( \sup_{u \le -k}  e^{(qr-\varepsilon_2)u} e^{\varepsilon_2 u}\left( 1  + |u|^q \right) \right) \\  
	\le & C_{T ,\xi} e^{-(qr-\varepsilon_2) k}.
	\end{aligned}
	\end{equation*}
	On the other hand, 
	\begin{equation*}
	\begin{aligned}
	& \sup_{-k \le u \le 0}  \left( e^{qru}  \big|  Z^{k,\Delta}_t(u) - x^k_t(u)  \big| \right) \\
	=   & \sup_{-k \le u \le 0} \left( e^{qru}  \big| Z^{k,\Delta}(t+u) - x^{k}(t+u)  \big|^q \right)  \\
	= & \sup_{0 \le u \le t}  \left( e^{qr(u-t)}  \big| Z^{k,\Delta}(u) - x^{k}(u)  \big|^q \right) \\
	\le & \sup_{0 \le u \le t}  \big| Z^{k,\Delta}(u) - x^{k}(u)  \big|^q . \\
	\end{aligned}
	\end{equation*}
	Therefore,
	$$
	\big\|Z^{k,\Delta}_t-x^k_t\big\|^q_r \II_{\{\sigma > T\}}   \le   C_\xi e^{-(qr-\varepsilon_2) k} +  \sup_{0 \le u \le t}  \big| Z^{k,\Delta}(u) - x^{k}(u)  \big|^q \II_{\{\sigma > T\}}.
	$$
	This, together with the same method used in the proof of \cite[Lemma 3.4]{LLM}, yields
	\begin{equation}\label{3.30}
	\begin{aligned}
	&  \mathbb{E} \left(  \big\| Z^{k,\Delta}_t - x^{k}_t\big\|_r^q \mathbf{1}_{\{\sigma > T\}} \right) \\
	\le & C_{T ,\xi} e^{-(qr-\varepsilon_2) k} +  \EE \left( \sup_{0 \le u \le t}  \big| Z^{k,\Delta}(u) - x^{k}(u)  \big|^q \mathbf{1}_{\{\sigma > T\}} \right) \\
	\le  & C_{T ,\xi} e^{-(qr-\varepsilon_2) k} +  C_{T,\xi,\D_1} \left(  h^q_\xi(\D)  + \D^{1\we\frac{q}{4}} + \left( \Lambda^{-1} \left( L \Delta^{-\theta} \right) \right)^{-\gamma} \right).
	\end{aligned}
	\end{equation}
	Inserting \eqref{3.30} into \eqref{3.29} gives
	\begin{equation}
	\begin{aligned}
	\mathbb{E}  \big\| X^{k,\Delta}_t - x^{k}_t\big\|_r^q 
	\le & C_{T,\xi} \delta  +  \frac{C_{T,\xi}}{\delta^{q/(p-q)} \left(\Lambda^{-1}(L \Delta_1^{-\theta})\right)^p} + C_\xi e^{-(qr-\varepsilon_2) k}  \\
	+  & C_{T,\xi, \Delta_1} \left( \Delta^{\frac{q}{2}-\varepsilon_1} + \Delta^{1 \wedge \frac{q}{4}} + h_\xi^q(\Delta) + \left( \Lambda^{-1} \left( L \Delta^{-\theta} \right) \right)^{-\gamma}\right) .  \\
	\end{aligned}
	\end{equation}
    The desired assertion follows by an argument similar to that used in Theorem \ref{th3.2}.
	The proof is complete.
\end{proof}
$\hfill\square$

Lemma \ref{lemma3.6} along with Theorem \ref{th3.2} implies the following theorem.

\begin{theorem}\label{th3.4}
	Let Assumptions \ref{a2.2}, \ref{a3.1} and \ref{a3.1'} hold with $\mu_2 \in \CP_{b}$ for some $b > r$ and the initial value $\xi$ is uniformly continuous on $\RR_-$. Then for any $q > 0$,
	$$
	\lim_{\D \to 0, k \to +\infty}  \sup_{0 \le t \le T}  \mathbb{E} \big\| X^{k,\Delta}_t - x_t\big\|_r^q = 0, \quad \forall T > 0.
	$$
\end{theorem}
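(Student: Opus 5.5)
The plan is to split the error with the triangle inequality through the truncated finite-delay segment $x^k_t$ from \eqref{TSFDE}, exactly as the paper announces before Theorem \ref{th3.2}. For $q>0$, $T>0$ and $t\in[0,T]$,
$$
\EE\|X^{k,\Delta}_t-x_t\|_r^q\le C_q\Big(\EE\|X^{k,\Delta}_t-x^k_t\|_r^q+\EE\|x^k_t-x_t\|_r^q\Big),
$$
with $C_q=2^{(q-1)\vee 0}$. Taking $\sup_{0\le t\le T}$ of both sides, it suffices to show that each term tends to zero in the joint limit $\Delta\to0$, $k\to\infty$.

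First I check that the hypotheses of the two earlier results are met. The paper already notes after Assumptions \ref{a3.1} and \ref{a3.1'} that these assumptions, with $\mu_2\in\CP_r$, imply Assumptions \ref{a2.1-f} and \ref{a2.1-g}, with local constants $(\bar K_R\vee\bar K'_R)\mu_2^{(r)}$. Together with Assumption \ref{a2.2} and the uniform continuity of $\xi$, this is exactly the hypothesis of Lemma \ref{lemma3.6}. Theorem \ref{th3.2} needs Assumptions \ref{a2.2}, \ref{a3.1}, \ref{a3.1'} and $\mu_2\in\CP_b$ with $b>r$, which we also have.

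For the second term, Theorem \ref{th3.2} gives
$$
\sup_{0\le t\le T}\EE\|x^k_t-x_t\|_r^q\le\EE\Big(\sup_{0\le t\le T}\|x^k_t-x_t\|_r^q\Big)\to0 \quad\text{as } k\to\infty.
$$
This bound does not involve $\Delta$, so it also holds in the joint limit. For the first term, Lemma \ref{lemma3.6} gives $\sup_{0\le t\le T}\EE\|X^{k,\Delta}_t-x^k_t\|_r^q\to0$ as $\Delta\to0$ and $k\to\infty$ jointly.

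The only delicate point is how the double limit is read. Given $\epsilon>0$, Theorem \ref{th3.2} supplies $k_0$ such that the second term is below $\epsilon$ for all $k\ge k_0$. Lemma \ref{lemma3.6} supplies $k_1$ and $\Delta_0$ such that the first term is below $\epsilon$ for all $k\ge k_1$ and $\Delta\le\Delta_0$. Taking $k\ge k_0\vee k_1$ and $\Delta\le\Delta_0$ finishes the proof. No estimate beyond the two cited results is needed.
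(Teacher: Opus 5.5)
Your proposal is correct and is essentially the paper's own argument: the paper derives Theorem \ref{th3.4} directly from Lemma \ref{lemma3.6} and Theorem \ref{th3.2} via the triangle inequality through $x^k_t$. It uses the observation that Assumptions \ref{a3.1} and \ref{a3.1'} imply \ref{a2.1-f} and \ref{a2.1-g}, and your explicit treatment of the joint limit in $(k,\Delta)$ simply spells out what the paper leaves implicit.
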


\subsection{Convergence rate of numerical segment process}\label{sec3.3}
This section aims to derive the convergence rate between $X^{k,\Delta}_t$ and $x_t$ under certain polynomial growth condition.
We first establish an exponential error estimate between $x_t^k$ and $x_t$ (see Theorem \ref{th3.6}). 
We then derive an error estimate between the segment process $x^k_t$ of the TSFDEs and the numerical segment process $X^{k,\Delta}_t$ (see Theorem \ref{th3.16}). 
Finally, by applying the triangle inequality, the convergence rate of the numerical segment process follows. To this end, we impose the following assumptions.

\begin{assp}\label{a3.4}
There exist positive constants $\tilde p > 2$, $a_3$, nonnegative constant $a_4$ and probability measures $\mu_3 \in \CP_{2 r}, \mu_4 \in \CP_{\tilde p r}$ such that
$$
\begin{aligned}
     & \langle \phi(0)-\varphi(0), f(\phi)-f(\varphi) \rangle \\
     \le &  a_3  \int_{-\infty}^0  |\phi(u)-\varphi(u)|^2  \mu_3(\d u) -a_4 |\phi(0) - \varphi(0)|^{\tilde p} + a_4 \int_{-\infty}^0 |\phi(u) - \varphi(u)|^{\tilde p} \mu_4(\d u)
\end{aligned}
$$
for any $\phi,\varphi \in \CC_r$.
\end{assp}

\begin{assp}\label{a3.5}
There exist positive constant $a_5$ and probability measure $\mu_5 \in \CP_{2r}$ such that
$$
|g(\phi)-g(\varphi)|^2  \le a_5 \int_{-\infty}^0 |\phi(u)-\varphi(u)|^2  \mu_5(\d u)
$$
 for any $\phi, \varphi \in \mathcal{C}_{r}$.
\end{assp}

It follows from Assumptions \ref{a3.4} and \ref{a3.5} that
      $$
      \begin{aligned}
      \langle \phi(0), f(\phi) \rangle 
       \le \left( \frac{1}{2} + \frac{1}{2} |f(0)|^2 + a_3 \mu_3^{(2r)} \right) \left( 1 + \|\phi\|_r^2 \right) 
       - a_4 |\phi(0)|^{\tilde p} + a_4 \int_{-\infty}^0 |\phi(u)|^{\tilde p} \mu_4(\d u)
      \end{aligned}
      $$
and
      $$
      \begin{aligned}
      |g(\phi)|^2  \le 2 |g(0)|^2 + 2 a_5 \mu_5^{(2r)}  \|\phi\|_r^2.
      \end{aligned}
      $$
Hence, under Assumptions \ref{a2.1-f}, \ref{a3.4} and \ref{a3.5}, Theorem \ref{th2.3} and Theorem \ref{th2.3'} hold. Moreover, it can be shown that $x^k_t$ converges to $x_t$ at an exponential rate. 

\begin{theorem}\label{th3.6}
Let Assumptions \ref{a2.1-f}, \ref{a3.4} and \ref{a3.5} hold and there exist $c_1  > 2$ and $c_2 > \tilde p$ such that $\mu_3, \mu_5 \in \CP_{c_1 r}$ and $\mu_4 \in \CP_{c_2 r}$. Then, for any $k \ge 1$ and $q > 0$,   
\begin{equation}\label{3-r1}
\EE \left( \sup_{0 \le t \le T}  \|x^k_t-x_t\|_r^q  \right) \le \tilde C_{T,\xi} e^{-\gamma_q rk}, \quad \forall T > 0,
\end{equation}
where 
$$
\gamma_q = \left\{\begin{aligned}
&q(c_1-2)/2, &  a_4 =0, \\
& q \big( (c_1-2) \wedge (c_2 - \tilde p)  \big) /2, &a_4 > 0, 
\end{aligned}\right. \quad
\tilde C_{T,\xi} = \left\{
\begin{aligned}
& C_\xi e^{CT}, \  &a_4 = 0, \\
& C_\xi \left( e^{CT^{\frac{q}{2}}} + e^{CT^{\frac{q\tilde p}{4}}}  \right),\ &a_4 > 0.
\end{aligned}
\right.
$$ 
\end{theorem}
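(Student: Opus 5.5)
Set $e^k(t)=x^k(t)-x(t)$, which vanishes for $t\le 0$. Exactly as in \eqref{3-3}, $\|e^k_t\|_r\le\sup_{0\le s\le t}|e^k(s)|$, so it suffices to bound $\EE\sup_{0\le t\le T}|e^k(t)|^q$. By Lyapunov's inequality we may take $q\ge 2$. The plan is to apply It\^o's formula to $|e^k(t)|^2$; because of Assumption \ref{a3.4} no localization is needed.

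\textbf{Splitting the coefficients.} Write $f_k(x^k_t)-f(x_t)=[f(\pi_k(x^k_t))-f(\pi_k(x_t))]+[f(\pi_k(x_t))-f(x_t)]$, and split $g$ the same way. Apply Assumption \ref{a3.4} to the first bracket with $\phi=\pi_k(x^k_t)$ and $\varphi=\pi_k(x_t)$; since $\pi_k$ preserves $\phi(0)$, the inner product is $\langle e^k(t),\cdot\rangle$. This gives
\[
a_3\int|\pi_k(e^k_t)(u)|^2\mu_3(\d u)-a_4|e^k(t)|^{\tilde p}+a_4\int|\pi_k(e^k_t)(u)|^{\tilde p}\mu_4(\d u).
\]
Because $k\ge T\ge t$ and $e^k$ vanishes on $\RR_-$, we have $\pi_k(e^k_t)=e^k_t$. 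The second bracket is the truncation error. Assumption \ref{a3.4} alone does not control $|f(\pi_k\phi)-f(\phi)|$, so we use the one-sided structure once more: $\langle e^k, f(\pi_kx_t)-f(x_t)\rangle$ is handled with Assumption \ref{a3.4} via $\phi=\pi_k(x_t)$, $\varphi=x_t$, where $|\phi(0)-\varphi(0)|=0$. The cross term with $e^k$ is treated by Cauchy--Schwarz and Young after writing $\langle e^k,F\rangle\le\frac12|e^k|^2+\frac12|F|^2$.

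If that route is not available, the fallback is to bound $|f(\pi_kx_t)-f(x_t)|$ through the local Lipschitz Assumption \ref{a2.1-f} on $\{\|x_t\|_r\le h\}$ together with moments. That fallback would not give the clean exponential rate, so the preferred route is the following. The quantities that remain are
\[
\int_{-\infty}^{-k}|x(t+u)-x(t-k)|^{2}\mu_i(\d u)\quad\text{and}\quad\int_{-\infty}^{-k}|x(t+u)-x(t-k)|^{\tilde p}\mu_4(\d u).
\]
These are bounded by $\|x_t\|_r^{2}\int_{-\infty}^{-k}e^{-2ru}\mu_i(\d u)\le\|x_t\|_r^2e^{-(c_1-2)rk}\mu_i^{(c_1r)}$, and similarly by $\|x_t\|_r^{\tilde p}e^{-(c_2-\tilde p)rk}\mu_4^{(c_2r)}$. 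For the diffusion term, Assumption \ref{a3.5} gives $|g(\pi_kx_t)-g(x_t)|^2\le a_5\int_{-\infty}^{-k}\ldots\mu_5$, which is bounded the same way. This is where the hypotheses $c_1>2$ and $c_2>\tilde p$ enter, and it produces the rates $e^{-(c_1-2)rk}$ and $e^{-(c_2-\tilde p)rk}$.

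\textbf{Delay integrals and Gronwall.} The terms $\int_0^t\int|e^k(s+u)|^{2}\mu_3(\d u)\d s$ are bounded by $\mu_3^{(2r)}\int_0^t\sup_{v\le s}|e^k(v)|^2\d s$, using the standard change of variables (as in \cite[Theorem 3.1]{LLM}) together with $\mu_3\in\CP_{2r}$. The term $\int_0^ta_4\int|e^k(s+u)|^{\tilde p}\mu_4(\d u)\d s\le\int_0^ta_4|e^k(s)|^{\tilde p}\d s$ (by Fubini, using $e^k=0$ on $\RR_-$) cancels the negative $-a_4|e^k|^{\tilde p}$ term. Raising to the power $q/2$, applying BDG to the martingale term as in \eqref{3-3-16}, and using Theorem \ref{th2.3}'s moment bounds for $\EE\sup\|x_t\|_r^{q}$ and $\EE\sup\|x_t\|_r^{q\tilde p/2}$ (which produces the constants $e^{CT^{q/2}}$ and $e^{CT^{q\tilde p/4}}$), the discrete Gronwall inequality yields
\[
\EE\sup_{t\le T}|e^k(t)|^q\le\tilde C_{T,\xi}\big(e^{-q(c_1-2)rk/2}+e^{-q(c_2-\tilde p)rk/2}\big),
\]
which is \eqref{3-r1}. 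For $k<T$ the bound is trivial, since the moments are bounded and $e^{-\gamma_q rk}\ge e^{-\gamma_q rT}$ can be absorbed into the constant.

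\textbf{Main obstacle.} The hard step is the cross term between $e^k$ and the truncation error of $f$. Assumption \ref{a3.4} is only one-sided, so there is no direct Lipschitz bound on $f(\pi_kx_t)-f(x_t)$. The first thing to try is the reformulation that applies Assumption \ref{a3.4} to the pair $(\pi_k(x^k_t),x_t)$ directly, with $\phi(0)-\varphi(0)=e^k(t)$ and $\phi-\varphi=e^k_t+(\pi_k(x^k_t)-x^k_t)$, expanded as $|a+b|^2\le2|a|^2+2|b|^2$. For this to work, the truncation defect $\pi_k(x^k_t)-x^k_t$ must be controlled by $x^k$'s moments from Theorem \ref{th2.3'}. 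The $\tilde p$ term needs $|a+b|^{\tilde p}\le(1+\epsilon)|a|^{\tilde p}+C_\epsilon|b|^{\tilde p}$, and its excess must still be absorbed by the negative term. This absorption is delicate and may need $\mu_4$ to have slight extra room, which $c_2>\tilde p$ supplies.
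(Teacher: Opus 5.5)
Your preferred route has a genuine gap at the drift truncation term. After inserting $\pi_k(x_t)$ you must control $\langle e^k(t), f(\pi_k(x_t))-f(x_t)\rangle$, and you invoke Assumption \ref{a3.4} for $\phi=\pi_k(x_t)$, $\varphi=x_t$. Since $\phi(0)=\varphi(0)$, the left-hand side of that assumption is identically zero for this pair. It therefore says nothing about the vector $f(\phi)-f(\varphi)$, and nothing about its inner product with the unrelated vector $e^k(t)$. Your Young step would need a bound on $|f(\pi_k(x_t))-f(x_t)|^2$, and no hypothesis supplies one for $f$; Assumption \ref{a3.5} does this only for $g$. The local-Lipschitz fallback is worse than you suggest: $\|\pi_k\phi-\phi\|_r\ge|\lim_{u\to-\infty}e^{ru}\phi(u)|$ for every $k$, so it need not even tend to zero. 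Consequently, the tail integrals you list as ``the quantities that remain'' are never actually produced for the drift.

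The alternative in your last paragraph is what the paper does: apply Assumption \ref{a3.4} once to the pair $(\pi_k(x^k_s),x_s)$, whose current values differ by exactly $e^k(s)$. But your additive expansion $|a+b|^{\tilde p}\le(1+\epsilon)|a|^{\tilde p}+C_\epsilon|b|^{\tilde p}$ leaves $\epsilon a_4\int_0^t|e^k(s)|^{\tilde p}\d s$ with a positive sign. That term cannot be absorbed in a Gronwall argument for $|e^k|^2$, and $c_2>\tilde p$ does nothing for it.

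The missing observation is that no additive splitting is needed; split the $u$-integrals at $-k$ instead.
For $u\in[-k,0]$ one has $\pi_k(x^k_s)(u)-x_s(u)=e^k(s+u)$ exactly. Fubini and $e^k=0$ on $\RR_-$ then give $\int_0^t\int_{-k}^0|e^k(s+u)|^{\tilde p}\mu_4(\d u)\d s\le\int_0^t|e^k(s)|^{\tilde p}\d s$, which cancels the $-a_4|e^k|^{\tilde p}$ term exactly. The same computation yields $\int_0^t|e^k|^2\d s$ for the $\mu_3,\mu_5$ terms.
For $u<-k$ the integrand is $|x^k(s-k)-x(s+u)|\le e^{-ru}(\|x^k_s\|_r+\|x_s\|_r)$. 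Using $e^{-\tilde p ru}\le e^{-c_2ru}e^{-(c_2-\tilde p)rk}$ gives the tail $2^{\tilde p-1}\mu_4^{(c_2r)}e^{-(c_2-\tilde p)rk}(\|x_s\|_r^{\tilde p}+\|x^k_s\|_r^{\tilde p})$, and analogously $e^{-(c_1-2)rk}$ for $\mu_3,\mu_5$.

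This works for every $k\ge1$, so your restriction to $k\ge T$ is unnecessary. With this replacement, the rest of your plan goes through as in the paper: raising to the power $q/2$, BDG, the moment bounds of Theorems \ref{th2.3} and \ref{th2.3'}, Gronwall, and \eqref{3-3}.
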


\begin{proof} 
\underline{\textbf{Case $(1)$: $a_4 > 0$.}}	
Let $k \ge 1$ and $T > 0$ be fixed arbitrarily.
$e^k(t)$, $e^k_t$ are the same as defined in the proof of Theorem \ref{th3.2}. For any $t \in [0,T]$ and $q > 0$, by the It$\hat{\rm o}$ formula and Assumptions \ref{a3.4}, \ref{a3.5}, one derives
\begin{equation}\label{3-r1-1}
\begin{aligned}
      |e^k(t)|^2   
      \le &   M^k(t) +  \int_0^t  \big( 2\langle e^k(s), f(x_s)-f_k(x^k_s) \rangle  
      +    |g(x_s)-g_k(x^k_s)|^2 \big) \d s  \\
      \le &  M^k(t)  + \int_0^t  \bigg(   2a_3\int_{-\infty}^0  |x_s(u) - \pi_k(x^k_s)(u)|^2 \mu_3(\d u)  - 2a_4   |e^k(s)|^{\tilde p}  \\
      + &  2a_4   \int_{-\infty}^0  |x_s(u) - \pi_k(x^k_s)(u)|^{\tilde p} \mu_4(\d u)  + a_5   \int_{-\infty}^0  |x_s(u) - \pi_k(x^k_s)(u)|^2 \mu_5(\d u) \bigg)  \d s, 
\end{aligned}
\end{equation}
where
$$
  M^k(t) = 2 \int_0^{t}  \big\langle e^k(s), \left(g(x_s) - g_k(x^k_s)\right) \d B(s) \big\rangle
$$
is a martingale.
Using the definition of $\pi_k(\cdot)$ implies
\begin{equation*}
\begin{aligned}
&\int_0^t  \int_{-\8}^0  |x_s(u) - \pi_k(x^k_s)(u)|^2  \mu_3(\d u)  \d s \\
 =  &  \int_0^t   \int_{-k}^0  |e^k_s(u)|^2  \mu_3(\d u)  \d s   +   \int_0^t    \int_{-\8}^{-k} |x_s(u) - x^k_s(-k)|^2 \mu_3(\d u)  \d s.
\end{aligned}
\end{equation*}
By the Fubini theorem and the fact that $|e^k(u)| = 0$ for any $u \le 0$, we obtain that
\begin{equation*}
\begin{aligned}
    &  \int_0^t  \int_{-k}^0  |e^k_s(u)|^2  \mu_3(\d u)  \d s  
=  \int_{-k}^0  \int_0^t  |e^k(s+u)|^2   \d s   \mu_3(\d u)  \\
\le & \int_{-k}^0  \int_0^t   |e^k(s)|^2    \d s   \mu_3(\d u)  
\le   \int_0^t   |e^k(s)|^2    \d s.   \\
\end{aligned}
\end{equation*}
Meanwhile, making use of $\mu_3 \in \CP_{c_1 r}$ and the techniques used in \cite[p.168]{LLMS23}, we also obtain 
\begin{equation*}
\begin{aligned}
       \int_0^t   \int_{-\8}^{-k} |x_s(u) - x^k_s(-k)|^2  \mu_3(\d u)  \d s  
\le  2  \mu_3^{(c_1 r)}  e^{-(c_1-2)rk}    \int_0^t  \left(   \|x_s\|_r^2 +   \|x^k_s\|_r^2  \right) \d s.     \\
\end{aligned}
\end{equation*}
Therefore, 
\begin{equation}\label{3-r1-mu3}
\begin{aligned}
&\int_0^t  \int_{-\infty}^0  |x_s(u) - \pi_k(x^k_s)(u)|^2  \mu_3(\d u)  \d s \\
\le &   \int_0^t   |e^k(s)|^2    \d s + 2  \mu_3^{(c_1 r)}  e^{-(c_1-2)rk} \int_0^t  \left(   \|x_s\|_r^2 +   \|x^k_s\|_r^2  \right) \d s.
\end{aligned}
\end{equation}
Similarly, noticing $\mu_4\in\CP_{c_2 r}$ and $\mu_5 \in \CP_{c_1r}$, we have
\begin{equation}\label{3-r1-mu4}
\begin{aligned}
&\int_0^t  \int_{-\infty}^0  |x_s(u) - \pi_k(x^k_s)(u)|^{\tilde p}  \mu_4(\d u)  \d s \\
\le &   \int_0^t   |e^k(s)|^{\tilde p}    \d s + 2^{\tilde p -1}  \mu_4^{(c_2 r)}  e^{-(c_2-\tilde p)rk} \int_0^t  \left(   \|x_s\|_r^{\tilde p} +   \|x^k_s\|_r^{\tilde p}  \right) \d s
\end{aligned}
\end{equation}
and 
\begin{equation}\label{3-r1-mu5}
\begin{aligned}
&\int_0^t  \int_{-\infty}^0  |x_s(u) - \pi_k(x^k_s)(u)|^2  \mu_5(\d u)  \d s \\
\le &   \int_0^t   |e^k(s)|^2    \d s + 2  \mu_5^{(c_1 r)}  e^{-(c_1-2)rk} \int_0^t  \left(   \|x_s\|_r^2 +   \|x^k_s\|_r^2  \right) \d s.
\end{aligned}
\end{equation}
Inserting \eqref{3-r1-mu3}, \eqref{3-r1-mu4} and \eqref{3-r1-mu5} into \eqref{3-r1-1} yields
\begin{equation*}
\begin{aligned}
       |e^k(t)|^2   
      &  \le    M^k(t)   +  L_1 \int_0^t  |e^k(s)|^2   \d s    +   L_2  e^{-( (c_1-2) \wedge (c_2 - \tilde p) )rk}  \\
      &  \times \int_0^t   \left(  \|x_s\|_r^2  +   \|x^k_s\|_r^2 + \|x_s\|_r^{\tilde p} +   \|x^k_s\|_r^{\tilde p} \right) \d s,
\end{aligned}
\end{equation*}
where
$L_1 = 2a_3 + a_5$, 
$L_2 = 2 \left( 2 a_3 \mu_3^{(c_1r)} + a_5 \mu_5^{(c_1r)} + 2^{\tilde p -1} a_4 \mu_4^{(c_2 r)} \right). $
Raising both sides to the power $q/2$ and using the H${\rm \ddot{o}}$lder inequality gives
\begin{equation*}
\begin{aligned}
      3^{1-\frac{q}{2}}   |e^k(t)|^q   
      \le &  |  M^k(t) |^{\frac{q}{2}} +  L_1^{\frac{q}{2}} T^{\frac{q}{2}-1}  \int_0^{t}  |e^k(s)|^q   \d s   \\
      + & L_2^{\frac{q}{2}} (4T)^{\frac{q}{2}-1}  e^{-\gamma_qrk} \int_0^{t}   \left(  \|x_s\|_r^q  +   \|x^k_s\|_r^q + \|x_s\|_r^{\frac{q \tilde p}{2}} +   \|x^k_s\|_r^{\frac{q \tilde p}{2}} \right)   \d s,
\end{aligned}
\end{equation*}
where $\gamma_q=q\left( (c_1-2) \wedge (c_2 - \tilde p) \right) /2$. By Theorems \ref{th2.3} and \ref{th2.3'},
\begin{equation}\label{3-con1-2}
\begin{aligned}
      &  3^{1-\frac{q}{2}}  \EE \bigg( \sup_{0 \le s \le t} |e^k(s)|^q \bigg)   \\
      \le &  \EE  \bigg(  \sup_{0 \le s \le t} |M^k(s)|^{\frac{q}{2}} \bigg) +  L_1^{\frac{q}{2}} T^{\frac{q}{2}-1}  \int_0^t \EE  \bigg( \sup_{0 \le s \le u}  |e^k(s)|^q \bigg)   \d u 
+  \tilde C_{T,\xi}  e^{-\gamma_qrk}.\\
\end{aligned}
\end{equation}
By the Burkholder-Davis-Gundy inequality and the Young inequality, we compute that
\begin{equation}\label{3-Mk}
\begin{aligned}
      3^{\frac{q}{2} -1} \EE \bigg( \sup_{0 \le s \le t} |M^k(s)|^{\frac{q}{2}}   \bigg)
      \le & C  \EE \bigg(   \int_0^{t}  |e^k(s)|^2 |  g(x_s) - g_k(x^k_s)  |^2 \d s   \bigg)^{\frac{q}{4}} \\
      \le & C \EE \bigg[  \bigg( \sup_{0 \le s \le t} |e^k(s)|^2 \bigg)^{\frac{q}{4}}  \bigg( \int_0^{t} |  g(x_s) - g_k(x^k_s)  |^2 \d s \bigg)^{\frac{q}{4}} \bigg]  \\ 
      \le &\frac{1}{2}  \EE\bigg( \sup_{0 \le s \le t} |e^k(s)|^q \bigg)  +   C  \EE \bigg(  \int_0^{t}  | g(x_s) - g_k(x^k_s)  |^2 \d s  \bigg)^{\frac{q}{2}}.
\end{aligned}
\end{equation}
It follows from Assumption \ref{a3.5} and \eqref{3-r1-mu5} that
\begin{equation*}
\begin{aligned}
         \int_0^{t}  | g(x_s) - g(\pi_k(x^k_s))  |^2 \d s 
      \le  a_5   \int_0^{t}   |e^k(s)|^2  \d s  +  2 a_5 \mu_5^{(c_1 r)}  e^{-(c_1-2)rk}   \int_0^{t}  \left(  \|x_s\|_r^2  +  \|x^k_s\|_r^2  \right) \d s.\\
\end{aligned}
\end{equation*}
This, together with \eqref{3-Mk} and Theorem \ref{th2.3}, implies
\begin{equation*}
\begin{aligned}
      &  3^{\frac{q}{2}-1} \EE \bigg( \sup_{0 \le s \le t} |M^k(s)|^{\frac{q}{2}} \bigg)    \\
      \le  & \frac{1}{2}  \EE \bigg( \sup_{0 \le s \le t} |e^k(s)|^q \bigg)   +    C T^{\frac{q}{2}-1} \int_0^t \EE \left( \sup_{0 \le s \le u} |e^k(s)|^q  \right) \d u
      +  \tilde C_{T,\xi}  e^{-\gamma_q rk}.  \\
\end{aligned}
\end{equation*}
Inserting the above inequality into \eqref{3-con1-2} gives
$$
\begin{aligned}
      \EE \bigg( \sup_{0 \le s \le t}   |e^k(s)|^q   \bigg)
      \le  C T^{\frac{q}{2}-1} \int_0^t \EE \bigg( \sup_{0 \le s \le u} |e^k(s)|^q  \bigg) \d u 
      + \tilde C_{T,\xi} e^{-\gamma_qrk},
\end{aligned}
$$
which then implies, by the Gronwall inequality, that
$$
      \EE \bigg( \sup_{0 \le t \le T} |e^k(t)|^q \bigg)   
      \le  \tilde C_{T,\xi} e^{-\gamma_q rk}.
$$
Furthermore, it is easy to compute from \eqref{3-3} that
$$
\begin{aligned}
  \EE \bigg( \sup_{0 \le t \le T}  \|e^k_t\|_r^q  \bigg)   
\le  \EE \bigg(   \sup_{0 \le t \le T}  |e^k(t)|^q \bigg)  
\le  \tilde C_{T,\xi} e^{-\gamma_q rk}.
\end{aligned}
$$

\underline{\textbf{Case $(2)$: $a_4 = 0$.}} The desired assertion follows by applying the It$\hat{\rm o}$ formula to $|e^k(t)|^q$, using Theorem \ref{th2.3}, Theorem \ref{th2.3'}, and repeating the same techniques as in \textbf{Case $(1)$}. We omit it for simplicity. 
 
The proof is then completed.
\end{proof}
$\hfill\square$

In view of Theorem \ref{th3.6}, to establish the order of convergence between $X^{k,\Delta}_t$ and $x_t$, it suffices to show the order of convergence between $X^{k,\Delta}_t$ and $x^k_t$. 
For this purpose, we assume that the initial data is H\"older continuous.

\begin{assp}\label{a5.8}
	There exist constants $a_6>0$ and $a_7 \ge 1/2$ such that the initial data $\xi$ satisfies
	$$
	|\xi(t_1)-\xi(t_2)|\le a_6 |t_1-t_2|^{a_7},\quad \forall t_1, t_2 \in (-\8, 0].
	$$
\end{assp}

\begin{assp}\label{a3.3}
	There exist positive constants $a_8$ and $v$ such that 
	$$
	\begin{aligned}
	|f(\phi)-f(\varphi)|
	\le  a_8 \|\phi-\varphi\|_r  \left( 1+ \|\phi\|_r^v+ \|\varphi\|_r^v   \right)
	\end{aligned}
	$$
	for any $\p, \f\in \CC_r$.
\end{assp}

Before proving the convergence rate of the numerical segment process, we first make the following remark and prepare several lemmas..
\begin{rmk}\label{rmk3.12}
Under Assumption \ref{a3.3}, we may define $\Lambda$ in \eqref{trun} by
$$
\Lambda(R)=  a_8  \left( 1 + 2 R^v  \right), \quad \forall R \ge 0.
$$
Without loss of generality, we assume $L > a_8$. For any $q > 0$, choose $p \ge q(v+1)$ arbitrarily and let
$$
\theta = \frac{qv}{2(p-q)}.
$$
It follows that
$$
\Lambda^{-1}(L \Delta^{-\theta})=\left(\frac{L \Delta^{-\theta}}{2 a_8} - \frac{1}{2 }\right)^{1/v},\quad \forall \Delta \in (0, 1].
$$
Clearly, $0 <\theta \le 1/2$. 
Furthermore, \eqref{f-lip} and \eqref{f-lin} are also satisfied.
\end{rmk}

Under Assumption \ref{a5.8}, it holds that $h_\xi(u) \le a_6 u^{1/2}$ for all $u \in [0,1]$. This, together with Lemma \ref{l3.6} and Remark \ref{rmk3.12} with $\gamma = p-q$,
the following lemma follows directly.
\begin{lemma}\label{l3.12}
	Let Assumptions \ref{a2.1-f}, \ref{a2.1-g}, \ref{a2.2}, \ref{a5.8} hold. Then for any $\Delta \in (0,1]$ and $q > 0$,
		$$
		\begin{aligned}
		& \sup_{k \ge 1} \sup_{0 \le t \le T}  \EE \left( \sup_{u \le 0} \left( e^{qru} | Z^{k,\Delta}_t(u) -  X^{k,\Delta}_t(u) |^{q} \right) \right) 
		\le  C_{T,\xi}  \Delta^{\frac{q}{2}-\varepsilon}, \quad \forall T > 0, \ \varepsilon \in (0, q/2),
		\end{aligned}
		$$
where $C_{T,\xi}$ is defined as in Theorem 
\ref{th2.3}.
\end{lemma}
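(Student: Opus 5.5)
The statement is a direct specialization of Lemma \ref{l3.6}. The plan is to show that each of the three error terms on its right-hand side, $\Delta^{\frac q2-\varepsilon}$, $h_\xi^q(\Delta)$ and $(\Lambda^{-1}(L\Delta^{-\theta}))^{-\gamma}$, is bounded by $C\Delta^{\frac q2-\varepsilon}$ once Assumption \ref{a5.8} and the choices of Remark \ref{rmk3.12} are in force. Lemma \ref{l3.6} requires $\xi$ to be uniformly continuous on $\RR_-$. This follows from Assumption \ref{a5.8}, since Hölder continuity with a global constant $a_6$ implies uniform continuity. Hence Lemma \ref{l3.6} applies for every $\gamma>0$, and its constant $C_{T,\xi}$ is uniform in $k$ and $t\in[0,T]$.

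\emph{Second (modulus of continuity).} For $u\in[0,1]$ and $|t-s|\le u$, Assumption \ref{a5.8} gives $|\xi(t)-\xi(s)|\le a_6 u^{a_7}\le a_6u^{1/2}$, because $a_7\ge 1/2$ and $u\le1$. Therefore $h_\xi^q(\Delta)\le a_6^q\Delta^{q/2}\le a_6^q\Delta^{\frac q2-\varepsilon}$ for all $\Delta\in(0,1]$.

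\emph{Third (truncation level).} With $\Lambda(R)=a_8(1+2R^v)$, $\theta=qv/(2(p-q))$ and $L>a_8$, Remark \ref{rmk3.12} gives
$$\Lambda^{-1}(L\Delta^{-\theta})=\Big(\frac{L\Delta^{-\theta}-a_8}{2a_8}\Big)^{1/v}.$$
Since $L>a_8$ and $\Delta\le1$, we have $L\Delta^{-\theta}-a_8\ge (1-a_8/L)L\Delta^{-\theta}$. Hence $\Lambda^{-1}(L\Delta^{-\theta})\ge c\,\Delta^{-\theta/v}$ with $c>0$ independent of $\Delta$. Choosing $\gamma=p-q$ then yields
$$(\Lambda^{-1}(L\Delta^{-\theta}))^{-\gamma}\le C\Delta^{\theta(p-q)/v}=C\Delta^{q/2}\le C\Delta^{\frac q2-\varepsilon}.$$

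\emph{Conclusion.} Substituting these three bounds into Lemma \ref{l3.6} gives the claimed estimate with a constant of the same form $C_{T,\xi}$. The only point needing care is the third step. One must check that the lower bound on $\Lambda^{-1}(L\Delta^{-\theta})$ holds uniformly in $\Delta\in(0,1]$, which is where $L>a_8$ is used. One must also check that $\gamma=p-q$ is compatible with the requirement $p\ge q(v+1)$ of Remark \ref{rmk3.12}; it is, since $\gamma$ in Lemma \ref{l3.6} is arbitrary.
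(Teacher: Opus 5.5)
Your proposal is correct and follows the paper's own one-line argument: apply Lemma \ref{l3.6}, bound $h_\xi(\Delta)\le a_6\Delta^{1/2}$ via Assumption \ref{a5.8}, and take $\gamma=p-q$ with the $\Lambda,\theta$ of Remark \ref{rmk3.12}, so that the truncation term is $O(\Delta^{q/2})$. You also make explicit the uniform lower bound $\Lambda^{-1}(L\Delta^{-\theta})\ge c\,\Delta^{-\theta/v}$ (using $L>a_8$), which the paper leaves implicit; note that, like the paper, you are tacitly working under Assumption \ref{a3.3}, which Remark \ref{rmk3.12} requires but the lemma's list of hypotheses omits.
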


\begin{lemma}\label{l3.13}
	Let Assumptions \ref{a3.4}, \ref{a3.5}, \ref{a3.3} hold. Then for any $\Delta \in (0,1]$ and $q > 0$, 
$$
	\sup_{k \ge 1} \EE \left( \sup_{0 \le t \le T} |x^k(t) - Z^{k,\Delta}(t)|^q \II_{\{\vartheta^k_\Delta > T\}}  \right) \le C_{T,\xi}   \Delta^{\frac{q}{2}-\varepsilon},~\forall T > 0,\ \varepsilon \in (0, q/2),
$$
where
$$
	\vartheta^k_\Delta = \tau^k_{\Lambda^{-1}(L \Delta^{-\theta})}  \wedge  \tau^k_{\Delta,\Delta},
$$
$C_{T,\xi}$ is defined as in Theorem \ref{th2.3},
$\tau^k_{\Lambda^{-1}(L \Delta^{-\theta})}$ and $\tau^k_{\Delta,\Delta}$ are defined in Theorem \ref{th2.3'} and \eqref{rh}, respectively.
\end{lemma}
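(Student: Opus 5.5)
We will estimate $e(t):=x^k(t)-Z^{k,\Delta}(t)$ on $[0,T\wedge\vartheta^k_\Delta)$. On this interval both $x^k$ and $Z^{k,\Delta}$ are continuous semimartingales: $Z^{k,\Delta}$ satisfies \eqref{ap-con}, so
\[
e(t)=\int_0^t\big(f_k(x^k_s)-f(X^{k,\Delta}_s)\big)\d s+\int_0^t\big(g_k(x^k_s)-g(X^{k,\Delta}_s)\big)\d B(s).
\]
Since $X^{k,\Delta}_s=\pi_k(X^{k,\Delta}_s)$ by \eqref{LI}, we have $f(X^{k,\Delta}_s)=f_k(X^{k,\Delta}_s)$, and likewise for $g$. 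This puts both equations on the common truncated coefficients $f_k,g_k$. We apply It\^o's formula to $|e(t\wedge\vartheta^k_\Delta)|^2$ and split each coefficient difference as
\[
f_k(x^k_s)-f_k(Z^{k,\Delta}_s)+f_k(Z^{k,\Delta}_s)-f_k(X^{k,\Delta}_s),
\]
and similarly for $g$.

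For the first difference we use the one-sided condition, Assumption \ref{a3.4}, applied to the pair $\pi_k(x^k_s),\pi_k(Z^{k,\Delta}_s)$. Its value at $0$ is exactly $e(s)$, and the difference is $e(s+u)$ for $u\in[-k,0]$ and $e(s-k)$ for $u<-k$; since $x^k=Z^{k,\Delta}=\xi$ on $\RR_-$, $e$ vanishes for negative arguments. Fubini then gives, as in the proof of Theorem \ref{th3.6},
\[
\int_0^t\int_{-\infty}^0|e_s(u)|^2\mu_3(\d u)\d s\le\int_0^t|e(s)|^2\d s,
\]
and the $\mu_4$ term with power $\tilde p$ is absorbed by $-a_4|e(s)|^{\tilde p}$ in the same way. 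For $g$ we use Assumption \ref{a3.5} in the same manner. No tail term $e^{-rk}$ appears here, because both processes are truncated at the same $k$.

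For the cross difference $f(Z^{k,\Delta}_s)-f(X^{k,\Delta}_s)$ we use Assumption \ref{a3.3}, which bounds it by
\[
a_8\|Z^{k,\Delta}_s-X^{k,\Delta}_s\|_r\big(1+\|Z^{k,\Delta}_s\|_r^v+\|X^{k,\Delta}_s\|_r^v\big).
\]
We handle it with $2\langle e,\cdot\rangle\le|e|^2+|\cdot|^2$. The $g$ cross term is bounded by Assumption \ref{a3.5} through $\mu_5^{(2r)}\|Z^{k,\Delta}_s-X^{k,\Delta}_s\|_r^2$, using $\int|\phi(u)|^2\mu_5(\d u)\le\mu_5^{(2r)}\|\phi\|_r^2$.

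The resulting bound is
\[
|e(t\wedge\vartheta)|^2\le M(t)+C\int_0^{t\wedge\vartheta}|e(s)|^2\d s+C\int_0^{t\wedge\vartheta}\|Z_s-X_s\|_r^2\big(1+\|Z_s\|_r^{2v}+\|X_s\|_r^{2v}\big)\d s.
\]
We raise it to the power $q/2$ (assuming first $q\ge2$; smaller $q$ follows by Lyapunov) and treat the martingale with Burkholder--Davis--Gundy and Young exactly as in \eqref{3-Mk}. The last integral is estimated by H\"older: its $L^{q/2}$ norm is at most
\[
C\sup_{s\le T}\big(\EE\|Z_s-X_s\|_r^{2q'}\big)^{1/2q'}\text{-type factors}\times\big(1+\EE\|Z_s\|^{p'}_r+\EE\|X_s\|^{p'}_r\big)^{\cdots}.
\]
This is then $\le C_{T,\xi}\Delta^{q/2-\varepsilon}$ by Lemma \ref{l3.12}, with the $\varepsilon$ adjusted for the H\"older exponent, together with Theorem \ref{th3.1}. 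We also need a moment bound for $\|Z^{k,\Delta}_s\|_r$ uniform in $k$ and $\Delta$; this follows from Lemma \ref{l3.5} and the elementary bound $\|Z_s\|_r\le\|\xi\|_r+\sup_{u\le s}|Z(u)|$.

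We then apply Gronwall to $\EE\sup_{0\le s\le t}|e(s\wedge\vartheta)|^q$, and finally use $\II_{\{\vartheta>T\}}$ to remove the stopping. All constants involve only moments uniform in $k$, so the supremum over $k\ge1$ is harmless.

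\textbf{Main obstacle.} The main obstacle is bringing the stopped integrals into Gronwall form, because $Z^{k,\Delta}$ is continuous only before $\tau^k_{\Delta,\Delta}$. We must write everything in terms of $e(s\wedge\vartheta)$ and check that the Fubini step still controls $\int\!\int|e_s(u)|^2\mu(\d u)\d s$ by $\int_0^t\sup_{r\le s}|e(r\wedge\vartheta)|^2\d s$. The secondary difficulty is bookkeeping the H\"older exponents so that the cross term yields exactly $\Delta^{q/2-\varepsilon}$ for every $\varepsilon\in(0,q/2)$. For this we invoke Lemma \ref{l3.12} with a larger power $q'$, namely $\Delta^{(q'/2-\varepsilon')q/q'}$, and choose $\varepsilon'$ accordingly.
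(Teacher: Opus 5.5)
Your proof is correct. Its skeleton matches the paper's:
\begin{itemize}
\item the same error $e=x^k-Z^{k,\Delta}$;
\item the same splitting through $f(Z^{k,\Delta}_s)=f_k(Z^{k,\Delta}_s)$;
\item the same Fubini argument exploiting the common truncation level $k$, so no $e^{-rk}$ tail appears;
\item the same treatment of the cross term via Assumption~\ref{a3.3}, H\"older's inequality, Lemma~\ref{l3.12} at the power $q(1+v)$, and the moment bounds of Theorem~\ref{th3.1} and Lemma~\ref{l3.5}.
\end{itemize}

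The differences lie in the stochastic-calculus step. The paper applies It\^o's formula directly to $|e|^q$. This forces a Young-type splitting of the weight $|e|^{q-2}$ against the history integrals, producing the exponents $q$ and $\tilde p+q-2$, before Fubini can be used. You instead apply It\^o to $|e|^2$, absorb the $\tilde p$-terms at the quadratic level, and raise to the power $q/2$ as in the proof of Theorem~\ref{th3.6}. That is slightly simpler.

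More substantively, the paper inserts the non-adapted indicator $\mathbf{1}_{\{\vartheta^k_\Delta>T\}}$ into the It\^o integral. It then uses Burkholder--Davis--Gundy to bound $\EE(\sup_{s\le t}|I_1(s)|\mathbf{1}_{\{\vartheta^k_\Delta>T\}})$ by $\frac12\EE(\sup_{s\le t}|e(s)|^q\mathbf{1}_{\{\vartheta^k_\Delta>T\}})$ plus integral terms. This is not justified as written, because Burkholder--Davis--Gundy naturally produces the stopped quantity $\EE\sup_{s\le t}|e(s\wedge\vartheta^k_\Delta)|^q$. Your formulation with $e(t\wedge\vartheta^k_\Delta)$, together with $\sup_{t\le T}|e(t)|^q\mathbf{1}_{\{\vartheta^k_\Delta>T\}}\le\sup_{t\le T}|e(t\wedge\vartheta^k_\Delta)|^q$, is the correct way to carry out this step. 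Your pathwise Fubini bound with upper limit $t\wedge\vartheta^k_\Delta$ also goes through, since $e$ vanishes on $(-\infty,0)$.

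Two small points remain.
\begin{enumerate}
\item $e(0)=\xi(0)-\Pi^\Delta(\xi(0))$ vanishes only when $|\xi(0)|<\Lambda^{-1}(L\Delta^{-\theta})$. In the opposite case $\vartheta^k_\Delta=0$, the event $\{\vartheta^k_\Delta>T\}$ is empty and the claim is trivial. Dispatch this case before writing $e(t\wedge\vartheta^k_\Delta)$ as a pure integral.
\item Lemma~\ref{l3.12} requires Assumption~\ref{a5.8}, which is not among the hypotheses of the statement. The paper's proof has exactly the same dependence, and some such regularity of $\xi$ is genuinely needed for the rate $\Delta^{q/2-\varepsilon}$. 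It is available in Theorem~\ref{th3.16}, which is where the lemma is used.
\end{enumerate}
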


\begin{proof}
Let $T > 0$ and $\Delta \in (0,1]$ be arbitrary.
For any $k \ge 1$, define
	$$
	e^{k,\Delta}(t) = x^k(t) - Z^{k,\Delta}(t),~\forall t \in (-\infty,T].
	$$
Note that $e^{k,\Delta}(t)=0$ for $t<0$, while $e^{k,\Delta}(0) = \xi(0) - \Pi^{\Delta}(\xi(0))$.
For any $t \in [0,T]$, we have
$$
  e^{k,\Delta}(t) \II_{\{\vartheta^k_\Delta>T\}} = \int_0^t \left( f_k(x^k_s) - f(X^{k,\Delta}_s) \right) \II_{\{\vartheta^k_\Delta>T\}} \d s + \int_0^t \left( g_k(x^k_s) - g(X^{k,\Delta}_s) \right) \II_{\{\vartheta^k_\Delta>T\}} \d B(s).
$$
By It${\rm \hat{o}}$'s formula
	\begin{equation}\label{3-e}
	|e^{k,\Delta}(t)|^q \II_{\{\vartheta^k_\Delta>T\}} 
	\le  \left( I_1(t)  + I_2(t ) + I_3(t) \right) \II_{\{\vartheta^k_\Delta>T\}},
	\end{equation}
	where
	$$
	I_1(t) = q  \int_0^t  |e^{k,\Delta}(s)|^{q-2}  \langle e^{k,\Delta}(s),  \left( g_k(x^k_s) - g(X^{k,\Delta}_s) \right) \d B(s) \rangle,
	$$
	$$
	I_2(t) = \frac{q}{2}  \int_0^t  |e^{k,\Delta}(s)|^{q-2}  \big( 2\langle e^{k,\Delta}(s), f_k(x^k_s) - f(Z^{k,\Delta}_s) \rangle  +  2|g_k(x^k_s) - g(Z^{k,\Delta}_s)|^2 \big) \d s
	$$
	and
	$$
	I_3(t) = \frac{q}{2}  \int_0^t  |e^{k,\Delta}(s)|^{q-2}   \big( 2\langle e^{k,\Delta}(s), f(Z^{k,\Delta}_s) - f(X^{k,\Delta}_s) \rangle  +  2 |g(Z^{k,\Delta}_s) - g(X^{k,\Delta}_s)|^2 \big) \d s.
	$$
	By Assumptions \ref{a3.4}, \ref{a3.5} and Young's inequality, 
	\begin{equation}\label{I_2'}
	\begin{aligned}
	I_2(t) \II_{\{\vartheta^k_\Delta>T\}} 
	\le & q \int_0^{t} \II_{\{\vartheta^k_\Delta>T\}} |e^{k,\Delta}(s)|^{q-2}   \bigg( a_3 \int_{-\infty}^0  |\pi_k(x^k_s)(u) - Z^{k,\Delta}_s(u) |^2 \mu_3(\d u)  \\
	&  - a_4|e^{k,\Delta}(s)|^{\tilde p} + a_4 \int_{-\infty}^0  |\pi_k(x^k_s)(u) - Z^{k,\Delta}_s(u) |^{\tilde p} \mu_4(\d u) \\
	& + a_5 \int_{-\infty}^0  |\pi_k(x^k_s)(u) - Z^{k,\Delta}_s(u) |^2 \mu_5(\d u) \bigg) \d s \\
	\le & (a_3+a_5)(q-2) \int_0^{t} |e^{k,\Delta}(s)|^q \II_{\{\vartheta^k_\Delta>T\}} \d s  \\
	& +  2a_3 \int_0^{t} \int_{-\infty}^0  |\pi_k(x^k_s)(u) - Z^{k,\Delta}_s(u) |^q \II_{\{\vartheta^k_\Delta>T\}} \mu_3(\d u) \d s \\
	& -  \frac{ \tilde p q a_4}{\tilde p +q -2} \int_0^{t} |e^{k,\Delta}(s)|^{\tilde p +q -2} \II_{\{\vartheta^k_\Delta>T\}} \d s  \\
	& +   \frac{ \tilde p q a_4}{\tilde p +q -2} \int_0^{t}\int_{-\infty}^0  |\pi_k(x^k_s)(u) - Z^{k,\Delta}_s(u) |^{\bar p +q -2} \II_{\{\vartheta^k_\Delta>T\}} \mu_4(\d u) \d s \\
	& +  2a_5 \int_0^{t} \int_{-\infty}^0  |\pi_k(x^k_s)(u) - Z^{k,\Delta}_s(u) |^q \II_{\{\vartheta^k_\Delta>T\}} \mu_5(\d u) \d s.
	\end{aligned}
	\end{equation}
	In a similar way as in the proof of \cite[$(4.13)$]{LLM}, one derives
	$$
	\begin{aligned}
	\int_0^{t}  \int_{-\infty}^0 \big|\pi_k(x^k_s)(u) - Z^{k,\Delta}_s(u) \big|^q \II_{\{\vartheta^k_\Delta>T\}} \mu_3(\d u) \d s 
	& \le  \int_0^{t} |e^{k,\Delta}(s)|^q \II_{\{\vartheta^k_\Delta>T\}} \d s,\\
	\int_0^{t}  \int_{-\infty}^0 \big|\pi_k(x^k_s)(u) - Z^{k,\Delta}_s(u) \big|^{^{\tilde p +q -2}} \II_{\{\vartheta^k_\Delta>T\}} \mu_4(\d u) \d s 
	&\le \int_0^{t} |e^{k,\Delta}(s)|^{^{\tilde p +q -2}} \II_{\{\vartheta^k_\Delta>T\}} \d s
	\end{aligned}
	$$
	and
	$$
	\int_0^{t}  \int_{-\infty}^0 \big|\pi_k(x^k_s)(u) - Z^{k,\Delta}_s(u) \big|^q \II_{\{\vartheta^k_\Delta>T\}} \mu_5(\d u) \d s 
	\le  \int_0^{t} |e^{k,\Delta}(s)|^q \II_{\{\vartheta^k_\Delta>T\}} \d s.
	$$
	Substituting these estimates into \eqref{I_2'} yields
	\begin{equation}\label{I_2}
	I_2(t) \II_{\{\vartheta^k_\Delta>T\}} \le (a_3+a_5)q \int_0^{t} |e^{k,\Delta}(s)|^q \II_{\{\vartheta^k_\Delta>T\}} \d s.
	\end{equation}
	Furthermore, it follows from Young's inequality, Assumptions \ref{a3.5}, \ref{a3.3} and the H${\rm \ddot{o}}$lder's inequality that
	\begin{equation*}
	\begin{aligned}
	&  I_3(t) \II_{\{\vartheta^k_\Delta>T\}}  \\
	\le & \frac{q}{2}  \int_0^{t}  |e^{k,\Delta}(s)|^{q-2} \left( |e^{k,\Delta}(s)|^2 
	+   | f(Z^{k,\Delta}_s) - f(X^{k,\Delta}_s) |^2   +   2 | g(Z^{k,\Delta}_s) - g(X^{k,\Delta}_s)  |^2 \right) \II_{\{\vartheta^k_\Delta>T\}} \d s \\
	\le & C  \int_0^{t}   \left( |e^{k,\Delta}(s)|^q 
	+   | f(Z^{k,\Delta}_s) - f(X^{k,\Delta}_s) |^q   +    | g(Z^{k,\Delta}_s) - g(X^{k,\Delta}_s)  |^q \right) \II_{\{\vartheta^k_\Delta>T\}} \d s \\
	\le & C  \int_0^{t}   \left( |e^{k,\Delta}(s)|^q 
	+    \big\|Z^{k,\Delta}_s - X^{k,\Delta}_s \big\|_r^q \left( 1 + \big\|Z^{k,\Delta}_s \big\|_r^{qv} + \big\|X^{k,\Delta}_s \big\|_r^{qv} \right) \right) \II_{\{\vartheta^k_\Delta>T\}}  \d s. 
	\end{aligned}
	\end{equation*}
	Moreover, for any $0 \le s <  \vartheta^k_\Delta$,
	$$
	\begin{aligned}
	\big\|Z^{k,\Delta}_s \big\|_r^{qv} 
	\le & \|\xi\|_r^{qv}  +  \sup_{0 \le u \le s} \big| Z^{k,\Delta}(u) \big|^{qv}.
	\end{aligned}
	$$
Hence, 
	\begin{equation}\label{I_3}
	\begin{aligned}
	I_3(t) \II_{\{\vartheta^k_\Delta>T\}}
	\le  C \int_0^{t}    |e^{k,\Delta}(s)|^q \II_{\{\vartheta^k_\Delta>T\}} \d s
	+ C J(t) \II_{\{\vartheta^k_\Delta>T\}},
	\end{aligned}
	\end{equation}
	where
	$$
	\begin{aligned}
	J(t) &=\int_0^{t}  \left( \sup_{u \le 0} \left( e^{qru}\big| Z^{k,\Delta}_s(u) - X^{k,\Delta}_s (u) \big|^q \right) \right) \\
	& \times \bigg( 1 + \|\xi\|_r^{qv} +  \sup_{0 \le u \le s} \big| Z^{k,\Delta}(u) \big|^{qv} + \big\|X^{k,\Delta}_s \big\|_r^{qv} \bigg) \d s, \quad \forall t \in [0, T].
	\end{aligned}
	$$
	Inserting \eqref{I_2} and \eqref{I_3} into \eqref{3-e} implies
	\begin{equation*}
	|e^{k,\Delta}(t)|^q \II_{\{\vartheta^k_\Delta>T\}} 
	\le  I_1(t)  \II_{\{\vartheta^k_\Delta>T\}}   + C \int_0^{t}    |e^{k,\Delta}(s)|^q \II_{\{\vartheta^k_\Delta>T\}} \d s + C J(t) \II_{\{\vartheta^k_\Delta>T\}}.
	\end{equation*}
	Therefore, 
	\begin{equation*}
	\begin{aligned}
	&    \EE\Big( \sup_{0 \le s \le t} |e^{k,\Delta}(s)|^q \II_{\{\vartheta^k_\Delta>T\}} \Big)   \\
	\le  & \EE \left( \sup_{0 \le s \le t} |I_1(s)| \II_{\{\vartheta^k_\Delta>T\}} \right)   +   C \EE \int_0^{t} |e^{k,\Delta}(s)|^q \II_{\{\vartheta^k_\Delta>T\}} \d s  + C \EE \left( J(t) \II_{\{\vartheta^k_\Delta>T\}} \right).
	\end{aligned}
	\end{equation*}
	Using Hölder's inequality, Lemmas \ref{l3.5} and \ref{l3.12}, Theorem \ref{th3.1}, we obtain that for any $\varepsilon \in (0, q/2) $, 
	\begin{equation}\label{E-f-f}
	\begin{aligned}
	& \EE \left( J(t) \II_{\{\vartheta^k_\Delta>T\}} \right)  \\
	\le & C \int_0^t  \left[ \EE \left( \sup_{u \le 0} \left( e^{q(1+v)ru} \big| Z^{k,\Delta}_s(u) - X^{k,\Delta}_s(u) \big|^{q(1+v)} \right) \right) \right]^{\frac{1}{v+1}} \\
	\times  &  \left[ \EE \left( 1 + \|\xi\|_r^{q(1+v)} +  \sup_{0 \le u \le s} \big| Z^{k,\Delta}(u) \big|^{q(1+v)} + \big\|X^{k,\Delta}_s \big\|_r^{q(1+v)} \right) \right]^{\frac{v}{v+1}} \d s \\
	\le & C_{T,\xi}  \Delta^{\frac{q}{2}-\varepsilon}.
	\end{aligned} 
	\end{equation} 
Consequently, 
	\begin{equation*}
	\begin{aligned}
	& \EE  \left( \sup_{0 \le s \le t} |e^{k,\Delta}(s)|^q \II_{\{\vartheta^k_\Delta>T\}} \right) \\
	\le &  \EE \left( \sup_{0 \le s \le t} |I_1(s)| \II_{\{\vartheta^k_\Delta>T\}} \right)   +   C \EE \int_0^{t} |e^{k,\Delta}(s)|^q \II_{\{\vartheta^k_\Delta>T\}} \d s  +  C_{T,\xi}  \Delta^{\frac{q}{2}-\varepsilon}.
	\end{aligned}
	\end{equation*}
	Moreover, by the Burkholder-Davis-Gundy inequality, the Young inequality as well as the techniques used in the proof of \eqref{3-Mk}, $I_2$ and $I_3$, we have that
	\begin{equation*}
	\begin{aligned}
	&  \EE \left(  \sup_{0 \le s \le t} |I_1(s)| \II_{\{\vartheta^k_\Delta>T\}} \right) \\
	\le & \frac{1}{2}  \EE \left( \sup_{0 \le s \le t} |e^{k,\Delta}(s)|^q \II_{\{\vartheta^k_\Delta>T\}}  \right)  +  C \EE \int_0^{t} \left(  |e^{k,\Delta}(s)|^{q-2}  |g_k(x^k_s) - g(X^{k,\Delta}_s)|^2  \II_{\{\vartheta^k_\Delta>T\}}  \right)  \d s \\
	\le & \frac{1}{2}  \EE \left( \sup_{0 \le s \le t} |e^{k,\Delta}(s)|^q \II_{\{\vartheta^k_\Delta>T\}}  \right)  + C \EE \int_0^{t}  \left( |e^{k,\Delta}(s)|^q \II_{\{\vartheta^k_\Delta>T\}} \right)  \d s  +  C_{T,\xi}  \Delta^{\frac{q}{2}-\varepsilon}.
	\end{aligned}
	\end{equation*}
	Hence, 
	$$
	\begin{aligned}
	\EE \left( \sup_{0 \le s \le t} \big| e^{k,\Delta}(s) \big|^q \II_{\{\vartheta^k_\Delta>T\}} \right)  
	\le & C \EE \int_0^{t}  \left( \big| e^{k,\Delta}(s) \big|^q \II_{\{\vartheta^k_\Delta>T\}} \right) \d s  +  C_{T,\xi}  \Delta^{\frac{q}{2}-\varepsilon} \\
	\le &  C  \int_0^t  \EE \left( \sup_{0 \le s \le u} \big| e^{k,\Delta}(s) \big|^q \II_{\{\vartheta^k_\Delta>T\}} \right) \d u +  C_{T,\xi} \Delta^{\frac{q}{2}-\varepsilon}.
	\end{aligned}
	$$
	Applying the Gronwall inequality gives
	$$
	\EE \left( \sup_{0 \le s \le t} |e^{k,\Delta}(s)|^q \II_{\{\vartheta^k_\Delta > T\}}  \right) 
	\le  C_{T,\xi}  \Delta^{\frac{q}{2}-\varepsilon}, \quad \forall t \in [0, T].
	$$
	Then the desired assertion holds since $C_{T,\xi}$ is independent of $k$ and $\Delta$. The proof of this theorem is complete.
\end{proof}
$\hfill\square$

\begin{theorem}\label{th3.16}
	Let {\rm Assumptions} \ref{a3.4}, \ref{a3.5}, \ref{a5.8}, \ref{a3.3} hold. Then for any $k \ge 1$, $\Delta \in (0,1]$,
$$
\sup_{0 \le t \le T} \EE \|x^k_t - X^{k,\D}_t\|_r^q  \le C_{T,\xi}  \left(  \Delta^{\frac{q}{2}-\varepsilon_1} + e^{-(qr-\varepsilon_2) k}\right), \quad \forall T > 0,\ \varepsilon_1 \in (0, q/2), \ \varepsilon_2 \in (0, qr)
$$
where $C_{T,\xi}$ is defined as in Theorem \ref{th2.3}
\end{theorem}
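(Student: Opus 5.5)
The plan is to copy the localization structure of the proof of Lemma \ref{lemma3.6}, but now with $\Delta_1=\Delta$ and the stopping time $\vartheta^k_\Delta$ from Lemma \ref{l3.13}. Every term should then come with an explicit rate instead of a qualitative limit.

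Fix $T>0$, $k\ge1$, $\Delta\in(0,1]$ and $t\in[0,T]$. It suffices to treat $q\ge2$, since the case $q<2$ follows by Lyapunov's inequality. I will split
$$
\EE\|x^k_t-X^{k,\Delta}_t\|_r^q=\EE\big(\|x^k_t-X^{k,\Delta}_t\|_r^q\II_{\{\vartheta^k_\Delta\le T\}}\big)+\EE\big(\|x^k_t-X^{k,\Delta}_t\|_r^q\II_{\{\vartheta^k_\Delta> T\}}\big).
$$

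\textbf{The event $\{\vartheta^k_\Delta\le T\}$.} I will not use Young's inequality with a free $\delta$ as in Theorem \ref{th3.2}. Instead I apply H\"older's inequality with exponent $p/q$:
$$
\EE\big(\|x^k_t-X^{k,\Delta}_t\|_r^q\II_{\{\vartheta^k_\Delta\le T\}}\big)\le \big(\EE\|x^k_t-X^{k,\Delta}_t\|_r^p\big)^{q/p}\,\PP\{\vartheta^k_\Delta\le T\}^{(p-q)/p}.
$$
The first factor is bounded by $C_{T,\xi}$ by Theorems \ref{th2.3'} and \ref{th3.1}. By Theorem \ref{th2.3'} and Lemma \ref{l3.5} with any large moment $p'$, we get $\PP\{\vartheta^k_\Delta\le T\}\le C_{T,\xi}(\Lambda^{-1}(L\Delta^{-\theta}))^{-p'}$. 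Remark \ref{rmk3.12} gives $\Lambda^{-1}(L\Delta^{-\theta})\ge c\Delta^{-\theta/v}$ for small $\Delta$ (for $\Delta$ near $1$ the bound is trivial after enlarging $C$). Choosing $p'$ large makes this term $O(\Delta^{q/2})$.

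\textbf{The event $\{\vartheta^k_\Delta>T\}$.} Here I use the triangle inequality, inserting $Z^{k,\Delta}_t$. The piece $\EE\|Z^{k,\Delta}_t-X^{k,\Delta}_t\|_r^q$ is exactly the quantity in Lemma \ref{l3.12}, so it is bounded by $C_{T,\xi}\Delta^{q/2-\varepsilon_1}$. For the piece $\|x^k_t-Z^{k,\Delta}_t\|_r^q\II_{\{\vartheta^k_\Delta>T\}}$ I split $u\le0$ into two regions:
\begin{itemize}
\item On $[-t,0]$, the supremum is at most $\sup_{0\le s\le t}|x^k(s)-Z^{k,\Delta}(s)|^q$, whose expectation on $\{\vartheta^k_\Delta>T\}$ is $\le C_{T,\xi}\Delta^{q/2-\varepsilon_1}$ by Lemma \ref{l3.13}.
\item On $u<-t$, both processes coincide with $\xi$ for $t+u\ge -k$. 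The discrepancy comes only from the $\pi_k$-type freezing at $-k$ in the definition of $Z^{k,\Delta}_t$, while $x^k_t$ carries the true history. As in the proof of Lemma \ref{lemma3.6}, Assumption \ref{a5.8} gives $|\xi(u)|\le C_\xi(1+|u|)^{a_7}\le C_\xi(1+|u|^q)$ up to constants. Then $\sup_{u\le-k}e^{qru}|\xi(t-k)-\xi(t+u)|^q\le C_{T,\xi}e^{-(qr-\varepsilon_2)k}$, where the polynomial factor is absorbed by sacrificing $\varepsilon_2$ in the exponent.
\end{itemize}
I should also check for a small mismatch when $k<T$. In that case the window $[-k,0]$ does not reach back to negative times, so the above argument covers everything.

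The terms $\Delta^{q/2}$ and the contribution of $e^{k,\Delta}(0)=\xi(0)-\Pi^\Delta(\xi(0))$ are dominated by the stated bound. Collecting everything gives the claimed estimate uniformly in $t\in[0,T]$.

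The main obstacle is the handling of $\{\vartheta^k_\Delta\le T\}$ with an explicit rate. In Lemma \ref{lemma3.6} this term was only made small qualitatively. Here it requires the polynomial lower bound on $\Lambda^{-1}(L\Delta^{-\theta})$ from Remark \ref{rmk3.12}, together with moment bounds of arbitrarily high order from Theorem \ref{th3.1} and Lemma \ref{l3.5} that are uniform in $k$ and $\Delta$, so that the probability can be pushed below $\Delta^{q/2}$.
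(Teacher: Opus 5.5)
\textbf{Gap: the case $t>k$ in the bound for $x^k_t-Z^{k,\Delta}_t$.}
Your treatment of the event $\{\vartheta^k_\Delta\le T\}$ is fine. H\"older with a free high moment $p'$ in Lemma \ref{l3.5} is, if anything, more flexible than the paper's Young inequality with $\delta=\Delta^{q/2}$, which relies on the specific choice of $\theta$ in Remark \ref{rmk3.12}. Your use of Lemma \ref{l3.12} for $Z^{k,\Delta}_t-X^{k,\Delta}_t$ is also fine. The problem is the estimate of $\|x^k_t-Z^{k,\Delta}_t\|_r^q$ when $t>k$. That case cannot be avoided, since the statement is for every $k\ge 1$ and every $t\in[0,T]$.

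By definition $Z^{k,\Delta}_t(u)=Z^{k,\Delta}(t-k)$ for all $u<-k$. In contrast, $x^k_t(u)=x^k(t+u)$ is the genuine segment of $x^k$, because $\pi_k$ acts only inside the coefficients $f_k,g_k$. Hence, for $t>k$:
\begin{itemize}
\item[(i)] On $[-t,-k)$ the difference is $x^k(t+u)-Z^{k,\Delta}(t-k)$, not $e^{k,\Delta}(t+u)$. So the bound in your first bullet by $\sup_{0\le s\le t}|x^k(s)-Z^{k,\Delta}(s)|^q$ is false there. This difference compares the two processes at different times; it is $O(1)$, not small in $\Delta$, and Lemma \ref{l3.13} cannot control it.
\item[(ii)] On $u<-t$ the frozen value is $Z^{k,\Delta}(t-k)$ with $t-k>0$. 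That is a value of the scheme, not $\xi(t-k)$, so the H\"older-continuity estimate from Assumption \ref{a5.8} does not apply.
\end{itemize}
Your remark that for $k<T$ \emph{the above argument covers everything} is therefore incorrect. Exactly in that regime, the region $u<-k$ compares the true path with a frozen numerical value, and neither bullet handles it. Separately, in your second bullet the segments agree for $u\in[-k,-t)$, so the relevant condition is $u\ge -k$, not $t+u\ge -k$.

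\textbf{Repair.} Split at $u=-k$ instead of $u=-t$, as the paper does.
\begin{itemize}
\item On $[-k,0]$: $e^{qru}|x^k_t(u)-Z^{k,\Delta}_t(u)|^q\le \sup_{0\le s\le t}|e^{k,\Delta}(s)|^q$, using $e^{k,\Delta}=0$ at negative times. Lemma \ref{l3.13} handles this on $\{\vartheta^k_\Delta>T\}$.
\item On $u\le -k$ with $t\le k$: your H\"older-continuity bound gives $Ce^{-(qr-\varepsilon_2)k}$.
\item On $u\le -k$ with $t>k$: use only the exponential weight,
\[
\sup_{u\le -k} e^{qru}\big|x^k(t+u)-Z^{k,\Delta}(t-k)\big|^q \le 2^q e^{-qrk}\Big(\|\xi\|_r^q+\sup_{0\le s\le t}|x^k(s)|^q+|Z^{k,\Delta}(t-k)|^q\Big).
\]
The $\xi$-term comes from $u<-t$, where $e^{qru}|\xi(t+u)|^q\le e^{-qrt}\|\xi\|_r^q\le e^{-qrk}\|\xi\|_r^q$. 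By Theorem \ref{th2.3'} and Lemma \ref{l3.5}, the expectation of the right-hand side is at most $C_{T,\xi}e^{-qrk}$, uniformly in $k$ and $\Delta$. This lies within the claimed bound.
\end{itemize}
With this extra case added, the rest of your argument goes through.
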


\begin{proof}
Fix $k \ge 1$, $T > 0$ and $\Delta \in (0,1]$ arbitrarily. Obviously, for any $q > 0$ and $t \in [0,T]$,
	\begin{equation}\label{e-kD}
	\begin{aligned}
\EE \|x^k_t - X^{k,\D}_t\|_r^q  
	= & \EE \left( \|x^k_t - X^{k,\D}_t\|_r^q  \II_{\{\vartheta^k_\Delta \le T\}}  \right)  +  \EE \left( \|x^k_t - X^{k,\D}_t\|_r^q \II_{\{\vartheta^k_\Delta > T\}}  \right)  \\
	\le & \EE \left( \|x^k_t - X^{k,\D}_t\|_r^q  \II_{\{\vartheta^k_\Delta \le T\}}  \right)  +  2^q  \EE \left(  \| Z^{k,\D}_t - X^{k,\D}_t\|_r^q \II_{\{\vartheta^k_\Delta > T\}} \right)    \\
	  + & 2^q  \EE \left( \|x^k_t - Z^{k,\D}_t\|_r^q \II_{\{\vartheta^k_\Delta > T\}}  \right),  \\
	\end{aligned}
	\end{equation}
where $\vartheta^k_\Delta$ is defined in Lemma \ref{l3.13}.
	We now estimate the first two terms on the right-hand side of the inequality.
	Using H${\rm \ddot{o}}$lder's inequality, Theorems \ref{th2.3} and \ref{th3.1}, we have  that for any $p > q(v+1)$,
	$$
	\begin{aligned}
	&  \EE \left( \|x^k_t - X^{k,\D}_t\|_r^q  \II_{\{\vartheta^k_\Delta \le T\}}  \right)   \\
	\le & \frac{q \Delta^{\frac{q}{2}}}{p} \EE   \|x^k_t - X^{k,\D}_t\|_r^q   +  \frac{p-q}{p \Delta^{\frac{q^2}{2(p-q)}}} \PP\{ \vartheta^k_\Delta \le T \}  \\
	\le & \frac{ q 2^p \Delta^{\frac{q}{2}}}{p}    \EE \left( \sup_{k \ge 1}\sup_{0 \le t \le T} \|x^k_t\|^p  +  \sup_{k \ge 1}\sup_{0 \le t \le T} \|X^{k,\Delta}_t\|^p \right)   +  \frac{p-q}{p \Delta^{\frac{q^2}{2(p-q)}}} \PP\{ \vartheta^k_\Delta \le T \}  \\
	\le &  C_{T,\xi}  \Delta^{\frac{q}{2}}  +  \frac{p-q}{p \Delta^{\frac{q^2}{2(p-q)}}} \PP\{ \vartheta^k_\Delta \le T \}.
	\end{aligned}
	$$
	Moreover, it follows from Theorem \ref{th2.3}, Lemma \ref{l3.5} and Remark \ref{rmk3.12} that
	$$
	\begin{aligned}
	\frac{p-q}{p \Delta^{\frac{q^2}{2(p-q)}}}    \PP\{ \vartheta^k_\Delta \le T \}  
	\le &   \frac{p-q}{p \Delta^{\frac{q^2}{2(p-q)}}} \left( \PP\{\tau^k_{\Lambda^{-1}(L \Delta^{-\theta})}  \le  T \}  +  \PP\{\tau^k_{\Delta,\Delta} \le T \}  \right)  \\
	\le &  \frac{p-q}{p \Delta^{\frac{q^2}{2(p-q)}}}  \frac{C_{T,\xi} }{\left( \Lambda^{-1}(L \Delta^{-\theta})\right)^p} 
	\le  C_{T,\xi}  \Delta^{\frac{q}{2}}.
	\end{aligned}
	$$
	Hence, 
	\begin{equation}\label{vartheta}
	\begin{aligned}
	\EE \left( \|x^k_t - X^{k,\D}_t\|_r^q  \II_{\{\vartheta^k_\Delta \le T\}}  \right)    \le  C_{T,\xi}  \Delta^{\frac{q}{2}}.
	\end{aligned}
	\end{equation}
	In addition, by Lemma \ref{l3.12}, we have that for any $\varepsilon_1 \in (0, q/2)$, 
	\begin{equation}\label{3-3-3.50}
	\begin{aligned}
	\EE \left(  \| Z^{k,\D}_t - X^{k,\D}_t\|_r^q \II_{\{\vartheta^k_\Delta > T\}} \right) \le \EE \left( \sup_{u \le 0} \left( e^{qru} |Z^{k,\D}_t(u) - X^{k,\D}_t(u)|^q \right) \right)   \le C_{T,\xi}  \Delta^{\frac{q}{2}-\varepsilon_1}.
	\end{aligned}
	\end{equation}
	Substituting \eqref{vartheta} and \eqref{3-3-3.50} into \eqref{e-kD} gives
	\begin{equation}\label{e-kD'}
	\begin{aligned}
	\EE \|x^k_t - X^{k,\D}_t\|_r^q  
	\le C_{T,\xi}  \Delta^{\frac{q}{2}-\varepsilon_1}  +  2^q  \EE \left( \|x^k_t - Z^{k,\D}_t\|_r^q \II_{\{\vartheta^k_\Delta > T\}}  \right).
	\end{aligned}
	\end{equation}
	Making use of Lemma \ref{l3.13} leads to
	\begin{equation}\label{3-3.50}
	\begin{aligned}
	&   \EE \left( \|x^k_t - Z^{k,\D}_t\|_r^q \II_{\{\vartheta^k_\Delta > T\}}  \right)   \\
	\le &  \EE \left( \sup_{u \le -k} \left( e^{qru} |x^k_t(u) - Z^{k,\D}_t(-k)|^q \right) \II_{\{\vartheta^k_\Delta > T\}}  \right)   +   \EE \left( \sup_{-k \le u \le 0} \left( e^{qru} |e^{k,\D}(t+u)|^q \right) \II_{\{\vartheta^k_\Delta > T\}}  \right)   \\
	\le & \EE \left( \sup_{u \le -k} \left( e^{qru}   |x^k_t(u) - Z^{k,\D}_t(-k)|^q \right) \right)   +   \EE \left( \sup_{0 \le u \le t}  |e^{k,\D}(u)|^q \II_{\{\vartheta^k_\Delta > T\}}  \right)   \\
	\le &   \EE \left( \sup_{u \le -k}  \left( e^{qru}   |x^k_t(u) - Z^{k,\D}_t(-k)|^q \right)  \right)   +   C_{T,\xi}  \Delta^{\frac{q}{2}-\varepsilon_1},  \\
	\end{aligned}
	\end{equation}
	where $e^{k,\Delta}(\cdot)$ is defined in the proof of Lemma \ref{l3.13}.
	It follows from \eqref{ap} that
	$$
	\begin{aligned}
	&   \EE \left( \sup_{u \le -k}  \left( e^{qru}   |x^k_t(u) - Z^{k,\D}_t(-k)|^q \right)  \right)   \\
	= & \EE \left( \sup_{u \le -k}  \left( e^{qru}   |x^k_t(u) - Z^{k,\D}_t(-k)|^q \right)  \right) \II_{(k,+\infty)}(t)  \\
	&  +   \EE \left( \sup_{u \le -k}  \left( e^{qru}   |x^k_t(u) - Z^{k,\D}_t(-k)|^q \right)  \right)  \II_{[0,k]}(t) \\
	= & \EE \left( \sup_{u \le -k}  e^{qru}   |x^k_t(u) - Z^{k,\D}_t(-k)|^q  \right) \II_{(k,+\infty)}(t)  \\
	&  +  \sup_{u \le -k} \left( e^{qru}|\xi(t+u) - \xi(t-k)|^q \right) \II_{[0,k]}(t).
	\end{aligned}
	$$
	By Theorem \ref{th2.3} and Lemma \ref{l3.5}, 
	$$
	\begin{aligned}
	&  \EE \left( \sup_{u \le -k} e^{qru}   |x^k_t(u) - Z^{k,\D}_t(-k)|^q  \right) \II_{(k,+\infty)}(t)  \\
	\le & 2^q \EE \left( \sup_{u \le -k} \left(e^{qru}   |x^k_t(u)|^q \right)  + \sup_{u \le -k} \left( e^{qru} |Z^{k,\D}_t(-k)|^q \right)  \right) \II_{(k,+\infty)}(t)  \\
	\le & 2^q \EE \left[ \sup_{u \le -t} \left(e^{qru}   |x^k_t(u)|^q \right)  + \sup_{-t \le u \le -k} \left(e^{qru}   |x^k_t(u)|^q \right) 
	+ e^{-qrk} |Z^{k,\D}_t(-k)|^q \right] \II_{(k,+\infty)}(t)  \\
	\le & 2^q \EE \left[ \sup_{u \le 0} \left(e^{qr(u-t)}   |x^k(u)|^q \right)  +  2^q e^{-qrk} \left(   \sup_{0 \le u \le t} |x^k(u)|^q    +   \sup_{0 \le u \le t}  |Z^{k,\D}(u)|^q \right) \right] \II_{(k,+\infty)}(t)  \\
	\le & 2^q \|\xi\|_r^q e^{-qrk}    +  2^q e^{-qrk} \left[ \EE \left(  \sup_{0 \le u \le t} |x^k(u)|^q \right)   +  \EE  \left( \sup_{0 \le u \le t}  |Z^{k,\D}(u)|^q \right) \right]   \\
	\le & C_{T,\xi}  e^{-qrk}.
	\end{aligned}
	$$
	Furthermore, using Assumption \ref{a5.8} and the elementary inequality $(x+y)^{qa_3} \le 2^{qa_3} (x^{qa_3} + y^{qa_3})$ for any $x, y\in \RR_+$ yields that for any $\varepsilon_2 \in (0, qr)$,
	$$
	\begin{aligned}
	&  \sup_{u \le -k}  \left(e^{qru} |\xi(t+u) - \xi(t-k)|^q \right) \II_{[0,k]}(t) \\
	\le & C_\xi  \sup_{u \le -k} \left[ \left(  |u|^{qa_3} +  k^{qa_3} \right)  e^{\varepsilon_2 u}  e^{(qr-\varepsilon_2) u} \right]  \\
	\le & C  e^{-(qr-\varepsilon_2) k} \sup_{u \le -k} \left[  ( |u|^{qa_3} + k^{qa_3} ) e^{\varepsilon_2 u}  \right]  \\
	\le & C   e^{-(qr-\varepsilon_2) k}  \sup_{u \le -k}\left(  |u|^{qa_3} e^{\varepsilon_2 u} \right)  \\
	\le & C   e^{-(qr-\varepsilon_2) k}  \sup_{u \le 0}\left(  |u|^{qa_3} e^{\varepsilon_2 u} \right)  
	\le  C  e^{-(qr-\varepsilon_2) k}.
	\end{aligned}
	$$
	Therefore, we obtain that
	$$
	\EE \left( \sup_{u \le -k}  e^{qru}   |x^k(t+u) - Z^{k,\D}(t-k)|^q  \right)  \le C_{T,\xi}  e^{-(qr-\varepsilon_2) k},\quad \forall \varepsilon_2 \in (0, qr).
	$$
	Inserting the above inequality into \eqref{3-3.50} yields that 
	$$
	\EE \left( \|x^k_t - Z^{k,\D}_t\|_r^q \II_{\{\vartheta^k_\Delta > T\}}  \right) 
	\le   C_{T,\xi} \left(  \Delta^{\frac{q}{2}-\varepsilon_1} + e^{-(qr-\varepsilon_2) k}\right), \quad \varepsilon_1 \in (0, q/2),\ \varepsilon_2 \in (0, qr).  
	$$
	This, along with \eqref{e-kD'} results in
	$$
	\EE \|x^k_t - X^{k,\D}_t\|_r^q  \le C_{T,\xi}  \left(  \Delta^{\frac{q}{2}-\varepsilon_1} + e^{-(qr-\varepsilon_2) k}\right).
	$$
	The desired assertion follows from the fact that $C_{T,\xi}$ is independent of $k$, $\Delta$. The proof of this theorem is complete.
\end{proof}
$\hfill\square$

%

Combining Theorems \ref{th3.6} and \ref{th3.16} with the triangle inequality, we obtain the following theorem.

\begin{theorem}\label{th3.20}
	Let Assumptions \ref{a3.4}, \ref{a3.5}, \ref{a5.8}, \ref{a3.3} hold with $\mu_3, \mu_5 \in \CP_{c_1 r}$ with $c_1  > 2$ and $\mu_4 \in \CP_{c_2 r}$ for some $c_2 > \tilde p$. Then, for any $k \ge 1$, $q > 0$,
\begin{equation*}
\sup_{0 \le t \le T}  \EE \left( \|X^{k,\Delta}_t - x_t\|_r^q  \right) \le C_{T,\xi} \left( \Delta^{\frac{q}{2}-\varepsilon_1} +  e^{-\alpha_{q,\varepsilon_2}k} \right), \quad \forall T > 0,\ \varepsilon_1 \in \left(0, q/2\right),\ \varepsilon_2 \in (0, qr),
\end{equation*}
where $C_{T,\xi}$ is defined as in Theorem \ref{th2.3},  $\alpha_{q,\varepsilon_2} = (\gamma_q r)\wedge(qr-\varepsilon_2)$ and $\gamma_q$ is defined in Theorem \ref{th3.6}.
\end{theorem}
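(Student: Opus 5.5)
The plan is to combine Theorems \ref{th3.6} and \ref{th3.16} through the elementary inequality
$$
\|X^{k,\Delta}_t - x_t\|_r^q \le 2^{(q-1)\vee 0}\left( \|X^{k,\Delta}_t - x^k_t\|_r^q + \|x^k_t - x_t\|_r^q \right),
$$
which holds for every $q>0$ (for $q\in(0,1]$ it is subadditivity of $s\mapsto s^q$, and for $q>1$ it follows from convexity). I will fix $T>0$, $k\ge1$, $\Delta\in(0,1]$, $\varepsilon_1\in(0,q/2)$ and $\varepsilon_2\in(0,qr)$, and take expectations pointwise in $t\in[0,T]$.

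First I check that the hypotheses of both theorems hold. Assumptions \ref{a3.4} and \ref{a3.5}, together with $\mu_3,\mu_5\in\CP_{c_1r}$ and $\mu_4\in\CP_{c_2r}$, are exactly the hypotheses of Theorem \ref{th3.6}, except for Assumption \ref{a2.1-f}. That assumption is implied by Assumption \ref{a3.3} with $K_R=a_8(1+2R^v)$. Assumptions \ref{a3.4}, \ref{a3.5}, \ref{a5.8} and \ref{a3.3} are exactly what Theorem \ref{th3.16} requires. As observed after Assumption \ref{a3.5}, Assumptions \ref{a3.4} and \ref{a3.5} imply Assumption \ref{a2.2} with $\bar p=\tilde p$, $\mu_1=\mu_4$, and Assumption \ref{a2.1-g} follows from Assumption \ref{a3.5}. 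So the moment bounds underlying both theorems, namely Theorems \ref{th2.3}, \ref{th2.3'} and \ref{th3.1}, are available.

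Next I bound the two terms. For the second term, since $\sup_{0\le t\le T}\EE\|x^k_t-x_t\|_r^q \le \EE\sup_{0\le t\le T}\|x^k_t-x_t\|_r^q$, Theorem \ref{th3.6} gives the bound $\tilde C_{T,\xi}e^{-\gamma_q rk}$. For the first term, Theorem \ref{th3.16} gives $C_{T,\xi}(\Delta^{q/2-\varepsilon_1}+e^{-(qr-\varepsilon_2)k})$. Both exponentials are bounded by $e^{-\alpha_{q,\varepsilon_2}k}$, where $\alpha_{q,\varepsilon_2}=(\gamma_q r)\wedge(qr-\varepsilon_2)$. Adding the two bounds yields the stated estimate.

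The only point needing care is the constant. The constant $\tilde C_{T,\xi}$ of Theorem \ref{th3.6} has the form $C_\xi(e^{CT^{q/2}}+e^{CT^{q\tilde p/4}})$, whereas the claim is stated with $C_{T,\xi}$ as in Theorem \ref{th2.3}. I will absorb $\tilde C_{T,\xi}$ into a generic constant of the same type, since $C$ denotes a generic constant that may change from line to line. If a precise match is needed, I will note that $\tilde C_{T,\xi}$ is again of exponential type in a power of $T$ and independent of $k$ and $\Delta$, which is all the later analysis uses. Apart from this bookkeeping, the argument is a direct triangle-inequality combination with no real obstacle.
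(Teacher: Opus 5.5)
Your proposal is correct and is essentially the paper's own argument. The paper obtains Theorem \ref{th3.20} by combining Theorem \ref{th3.6} and Theorem \ref{th3.16} through the (quasi-)triangle inequality, with both exponential terms dominated by $e^{-\alpha_{q,\varepsilon_2}k}$. Your checks that Assumption \ref{a3.3} gives Assumption \ref{a2.1-f}, that Assumptions \ref{a3.4} and \ref{a3.5} give Assumptions \ref{a2.1-g} and \ref{a2.2}, and that $\tilde C_{T,\xi}$ can be absorbed into a constant of the type in Theorem \ref{th2.3}, are bookkeeping the paper leaves implicit.
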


Especially, in Theorem \ref{th3.20}, choose $\varepsilon_1 = \varepsilon_2 =: \varepsilon \in (0, (q/2)\wedge(qr))$ and $k$ in the form
\begin{equation}\label{k_De}
k_{\D} = \left\lfloor  \frac{- \ln(\D/K)}{\alpha_{q,\varepsilon}} \right\rfloor,
\end{equation}
where $K\ge1$ is a parameter for the truncation level; for each fixed step size 
$\Delta$, the memory length $k_\Delta$ is nondecreasing with respect to $K$.
Then we can derive that the TEM numerical segment process converges to the exact one of \eqref{ISFDE} at a rate close to $1/2$ in $\Delta$.

\begin{coro}\label{rmk3.19} Let the Assumptions in Theorem \ref{th3.20} hold. Then for any $q>0$ and $\Delta \in (0,1]$,
$$
		\sup_{0 \le t \le T}  \EE \big\|X^\D_t -x_t \big\|_r^q   \le  C_{T,\xi}  \Delta^{\frac{q}{2}-\varepsilon}, \quad \forall T > 0, \ \varepsilon \in \left(0, (q/2)\wedge(qr)\right),
$$
where $C_{T,\xi}$ is defined as in Theorem \ref{th2.3}.
\end{coro}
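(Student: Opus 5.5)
We plan to obtain the corollary by substituting the memory length $k=k_\Delta$ from \eqref{k_De} into Theorem \ref{th3.20}, with $X^\Delta_t$ understood as $X^{k_\Delta,\Delta}_t$. Fix $q>0$, $T>0$ and $\varepsilon\in(0,(q/2)\wedge(qr))$, and take $\varepsilon_1=\varepsilon_2=\varepsilon$ in Theorem \ref{th3.20}. Both admissibility conditions $\varepsilon_1\in(0,q/2)$ and $\varepsilon_2\in(0,qr)$ then hold. Theorem \ref{th3.20} gives, for every $k\ge1$ and $\Delta\in(0,1]$,
$$
\sup_{0\le t\le T}\EE\|X^{k,\Delta}_t-x_t\|_r^q\le C_{T,\xi}\big(\Delta^{\frac q2-\varepsilon}+e^{-\alpha_{q,\varepsilon}k}\big).
$$
The constant $C_{T,\xi}$ here does not depend on $k$ or $\Delta$. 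This independence is what allows us to choose $k$ as a function of $\Delta$.

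The key step is to estimate the exponential term at $k=k_\Delta$. By the floor inequality, $k_\Delta> -\ln(\Delta/K)/\alpha_{q,\varepsilon}-1$. Hence
$$
e^{-\alpha_{q,\varepsilon}k_\Delta}\le e^{\alpha_{q,\varepsilon}}\,\frac{\Delta}{K}\le e^{\alpha_{q,\varepsilon}}\Delta .
$$
We then compare exponents. Because $\varepsilon>0$ we have $\frac q2-\varepsilon<\frac q2$, but when $q>2$ this exponent exceeds $1$. In that case $\Delta$ is smaller than $\Delta^{q/2-\varepsilon}$, and the bound $e^{-\alpha k_\Delta}\lesssim\Delta$ is \emph{not} dominated by $\Delta^{q/2-\varepsilon}$. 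This is the main obstacle.

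Our fix is to read \eqref{k_De} so that the decay matches the target exponent. Concretely, we would choose the integer $k_\Delta$ with $\alpha_{q,\varepsilon}k_\Delta\ge(\frac q2-\varepsilon)\ln(K/\Delta)-\alpha_{q,\varepsilon}$, which is an equivalent rescaling of the constant in \eqref{k_De}. Equivalently, we apply the theorem with $q'=2$ to obtain the $q=2$ bound and then pass to general $q$. With this choice, $e^{-\alpha_{q,\varepsilon}k_\Delta}\le C\Delta^{q/2-\varepsilon}$. For $q\le 2$ no adjustment is needed: there $\frac q2-\varepsilon<1$, so $\Delta\le\Delta^{q/2-\varepsilon}$ on $(0,1]$ and the formula \eqref{k_De} works as written.

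A second point is that $k_\Delta\ge1$ is required in Theorem \ref{th3.20}. This holds for $\Delta$ small, since $K\ge1$. For the remaining finitely bounded range of $\Delta$ we have $k_\Delta\ge 0$; when $k_\Delta=0$ we can either enlarge $k$ to $1$, which only improves the exponential term, or absorb the uniform moment bounds of Theorems \ref{th2.3} and \ref{th3.1} into the constant, since $\Delta$ is then bounded below. Combining these steps gives $\sup_{0\le t\le T}\EE\|X^\Delta_t-x_t\|_r^q\le C_{T,\xi}\Delta^{q/2-\varepsilon}$.
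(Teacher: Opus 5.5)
Your argument follows the paper's route. You insert a logarithmic memory length $k_\Delta$ into Theorem~\ref{th3.20} with $\varepsilon_1=\varepsilon_2=\varepsilon$, which is legitimate because the constant there does not depend on $k$ or $\Delta$. The difference is the choice of $k_\Delta$, and your observation exposes a real gap in the paper's one-line derivation.

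With $k_\Delta$ as in \eqref{k_De}, one only gets $e^{-\alpha_{q,\varepsilon}k_\Delta}\le e^{\alpha_{q,\varepsilon}}\Delta/K$. Theorem~\ref{th3.20} then yields $C_{T,\xi}\Delta^{(q/2-\varepsilon)\wedge 1}$, which is the claimed rate only when $q/2-\varepsilon\le 1$. That covers $q=2$, the only case used later in Theorem~\ref{IPM-rate}, but not the corollary as stated for all $q>0$. Your choice $k_\Delta=\bigl\lfloor (q/2-\varepsilon)\ln(K/\Delta)/\alpha_{q,\varepsilon}\bigr\rfloor\vee 1$ still satisfies $\alpha_{q,\varepsilon}k_\Delta\ge(q/2-\varepsilon)\ln(K/\Delta)-\alpha_{q,\varepsilon}$. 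Hence $e^{-\alpha_{q,\varepsilon}k_\Delta}\le e^{\alpha_{q,\varepsilon}}\Delta^{q/2-\varepsilon}$, and this proves the statement for every $q>0$. The price is that, when $q>2$, the memory length, and hence the storage $kl+1$, grows by the factor $q/2-\varepsilon$.

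Three side remarks need correcting, though none affects the main argument:
\begin{itemize}
\item Your choice is not ``an equivalent rescaling of the constant'' $K$. Changing $K$ only alters the prefactor of $\Delta$, never its power. What you rescale is the coefficient of the logarithm, i.e.\ you replace $\alpha_{q,\varepsilon}$ by $\alpha_{q,\varepsilon}/(q/2-\varepsilon)$.
\item The alternative of applying the theorem with $q'=2$ and then passing to general $q$ fails for $q>2$, if read as deriving the $L^q$ estimate from the $L^2$ one. Lyapunov's inequality only transfers moment bounds downward, so an $L^2$ rate cannot give $\Delta^{q/2-\varepsilon}$ in $L^q$.
\item For the case $k_\Delta=0$, simply use $k_\Delta\vee1$ as above. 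The fallback through uniform moment bounds is not covered by Theorem~\ref{th3.1} as stated, since that theorem requires $k\ge1$.
\end{itemize}
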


\section{Numerical IPM}\label{S4}
The goal of this section is to establish the ergodicity of the exact and numerical systems, which forms the basis for the approximation of IPMs and the analysis of long-time sampling errors.
Building on \cite{WYM17}, we further establish exponential ergodicity in the Wasserstein distance for both \eqref{ISFDE} and its numerical approximation.

\subsection{Ergodicity of SFDEswID}
We first collect some notation. To emphasize dependence on the initial value, we denote by $x^\xi(t)$ and $x^\xi_t$ the exact solution and the segment process of \eqref{ISFDE}, $x^{k,\xi}(t)$ and $x^{k,\xi}_t$ denote those of \eqref{TSFDE}. Denote $\BB_b(\CC_r)$ as the set of all bounded Borel measurable functionals on $\CC_r$ and $\CP(\CC_r)$ be the set of all probability measures on $(\CC_r, \BB(\CC_r))$. Let
$$
\CP_q(\CC_r) = \bigg\{ \m \in \CP(\CC_r) :  \int_{\CC_r} \|\p\|_r^q \m(\d \p) < \8 \bigg\}, \quad \forall q \ge 1,
$$
which is a Polish space (see \cite[Theorem 6.18]{V09}) under the $L^q$-Wasserstein distance
$$
\WW_q(\mu, \nu) = \Big(  \inf_{\rho \in \Pi(\mu,\nu)} \int_{\CC_r \K \CC_r} \|\p-\f\|_r^q \r (\d \p, \d \f) \Big)^{1/q},\quad \mu, \nu \in \CP_q(\CC_r),
$$
where $\Pi(\mu,\nu)$ is the set of all coupling for $\mu$ and $\nu$.
The Markov semigroup operators $P_t$ associated with $x^\xi_t$ is given by
$$
P_t h(\xi) = \EE h(x^\xi_t) = \int_{\CC_r} h(\phi) P_t(\xi, \d \phi), \quad \forall \xi \in \CC_r, ~h \in \BB_b(\CC_r),
$$
where $P_t(\xi, \cdot)$ is the transition probability function of $x^\xi_t$, that is $P_t(\xi, A):= \PP\big( x^\xi_t \in A \big)$, and
$$
(\mu P_t)(A) := \int_{\CC_r} P_t(\xi, A) \mu(\d \xi)
$$
for any $A \in \BB(\CC_r)$ and $\mu \in \CP(\CC_r)$. Similarly, for any $k \ge 1$, denote $P^k_t$ and $P^k_t(\xi, \cdot)$ by the Markov semigroup operators and transition probability function of $x^{k,\xi}_t$.
 
To establish the ergodicity of the exact and numerical systems, we impose the following dissipative assumption.
\begin{assp}\label{dis}
There exist positive constants $b_1, b_2$, probability measures $\nu_1 \in \CP_{2r}$ such that for any $\phi, \varphi \in \CC_r$,
$$
\begin{aligned}
 \langle \p(0)-\f(0), f(\p)-f(\f) \rangle 
\le &  -b_1 |\p(0)-\f(0)|^2  +  b_2  \int_{-\8}^0  |\p(u)-\f(u)|^2 \nu_1(\d u).
\end{aligned}
$$
\end{assp}

For convenience, we rewrite Assumption \ref{a3.5} as the following form.

\begin{assp}\label{g}
There exists positive constant $b_3$ and probability measure $\nu_2 \in \CP_{2r}$ such that for any $\phi, \varphi \in \mathcal{C}_r$,
$$
|g(\phi)-g(\varphi)|^2  \le b_3 \int_{-\infty}^0 |\phi(u)-\varphi(u)|^2  \nu_2(\d u).
$$
\end{assp}

\begin{rmk}\label{rmk4.3}
If Assumptions \ref{dis} and \ref{g} hold, the following inequalities are valid.
\begin{itemize}
\item[$(\romannumeral1)$]
By Young's inequality, for any $\rho > 0$, we have
\begin{equation}\label{mon-f}
\langle \phi(0), f(\phi) \rangle \le (2\rho)^{-1} |f(0)|^2 - (b_1 - 2^{-1} \rho) |\phi(0)|^2 + b_2 \int_{-\infty}^0 |\phi(u)|^2 \nu_1(\d u)
\end{equation}
and
\begin{equation}\label{mon-g}
|g(\phi)|^2 \le (1 + \rho^{-1})|g(0)|^2  +  (1+\rho)b_3  \int_{-\infty}^0 |\phi(u)|^2 \nu_2(\d u).
\end{equation}

\item[$(\romannumeral2)$]
Assume that there exist $p > 2$ so that $\nu_1, \nu_2 \in \CP_{pr}$, then it can be easily verified using Young's inequality that
\begin{equation}\label{4-mon}
\begin{aligned}
& |\phi(0)|^{p-2} \left(  2 \langle \phi(0),  f(\phi) \rangle + (p-1)|g(\phi)|^2  \right)  \\
\le & K_1 - \alpha_1 |\p(0)|^p + \alpha_2 \int_{-\infty}^0 |\phi(u)|^p \nu_1(\mathrm{d} u) + \alpha_3 \int_{-\infty}^0 |\phi(u)|^p \nu_2(\d u),
\end{aligned}
\end{equation}
where 
$$
K_1 = \frac{2}{p} \left( \frac{1}{\rho}|f(0)|^2 + (p-1)(1+\rho^{-1})|g(0)|^2 \right)^{\frac{p}{2}},
$$
$$
\alpha_1 = 2b_1 - \frac{2(p-1)\rho}{p} - \frac{p-2}{p} \left( 2b_2 + (p-1) (1+\rho)b_3\right)
$$
and 
$$
\alpha_2 = \frac{4b_2}{p}, \quad \alpha_3 = \frac{2(p-1)(1+\rho)b_3}{p}.
$$
Moreover,
\begin{equation}\label{4-att}
\begin{aligned}
    & |\phi(0) - \varphi(0)|^{p-2} \left(  2 \langle \phi(0)-\varphi(0), f(\phi)-f(\varphi)\rangle  + (p-1)|g(\phi)-g(\varphi)|^2  \right)  \\
  \le &  - \beta_1 |\phi(0)-\varphi(0)|^p + \beta_2 \int_{-\infty}^0 |\phi(u)-\varphi(u)|^p \nu_1(\d u) + \beta_3 \int_{-\infty}^0 |\phi(u)-\varphi(u)|^p \nu_2(\d u),
\end{aligned}
\end{equation}
where 
$$
\beta_1 = 2b_1 - \frac{p-2}{p} \left( 2b_2 + (p-1)b_3\right), ~\beta_2 = \frac{4b_2}{p},~ \beta_3 = \frac{2(p-1)b_3}{p}.
$$
\end{itemize}
\end{rmk}

\begin{rmk}\label{rmk4.4}
Assuming that there exists $p > 2$ such that $2b_1 > 2b_2 + (p-1)b_3$, we have
\[
\beta_1 > \beta_2 + \beta_3.
\]
We can also take $\rho>0$ sufficiently small so that $\alpha_1 > \alpha_2 + \alpha_3$. The specific value of $\rho$ will be fixed later in the statements of the main theorems. The parameters $\alpha_1, \alpha_2,$ and $\alpha_3$ may depend on $\rho$; for simplicity, this dependence is not made explicit.
\end{rmk}

\begin{rmk}
In accordance with \cite[Theorem 4.2]{WYM17}, both $\{ x^{\xi}_t \}_{t \ge 0}$ and $\{ x^{k,\xi}_t \}_{t \ge 0}$ are strong homogeneous Markov processes.
\end{rmk}


\begin{lemma}\label{l4.4}
Assume that Assumptions \ref{a2.1-f}, \ref{dis} and \ref{g} hold, and that there exists $p \ge 2$ such that $2b_1 > 2b_2 + (p-1)b_3$, $\nu_1, \nu_2 \in \CP_{pr}$. Then
\begin{equation*}
    \EE |x^{\xi}(t)|^p  
   \le C (1  +  \|\xi\|_r^p  e^{-p \alpha t})
\end{equation*}
for any $\alpha \in (0, r \wedge \alpha_0)$, where $\alpha_0$ denotes the unique positive root to the following equation
$$
 \alpha_1 - p\bar\alpha - \alpha_2 \nu_1^{(p\bar\alpha)} - \alpha_3 \nu_2^{(p\bar\alpha)}  = 0,
$$
where $\alpha_1$, $\alpha_2$ and $\alpha_3$ are defined in Remark \ref{rmk4.3}.
Furthermore, for any $\varepsilon \in (0, \alpha)$,
$$
    \EE \|x^{\xi}_t\|_r^p \le C(1 +  \|\xi\|_r^p e^{-p \alpha_\varepsilon t}),
$$
where $\alpha_\varepsilon = \alpha -\varepsilon$.
\end{lemma}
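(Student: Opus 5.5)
The plan is to apply It\^o's formula to $e^{p\alpha t}|x^\xi(t)|^p$ and use the monotonicity estimate \eqref{4-mon} from Remark \ref{rmk4.3}, with $\rho$ fixed small so that $\alpha_1>\alpha_2+\alpha_3$ (Remark \ref{rmk4.4}). A localisation by the stopping times $\tau_h$ of Theorem \ref{th2.3}, followed by $h\to\infty$ with Fatou's lemma, removes the martingale term. This gives
\begin{equation*}
e^{p\alpha t}\EE|x^\xi(t)|^p \le |\xi(0)|^p + \frac{p}{2}\EE\int_0^t e^{p\alpha s}\Big(K_1 + \big(2\alpha-\alpha_1\big)|x(s)|^p + \alpha_2\int_{-\infty}^0 |x(s+u)|^p\nu_1(\d u) + \alpha_3\int_{-\infty}^0|x(s+u)|^p\nu_2(\d u)\Big)\d s.
\end{equation*}
Up to harmless factor conventions (the coefficient $p\alpha$ versus $\frac{p}{2}\cdot 2\alpha$, which I will normalise to match the definition of $\alpha_0$), this is the starting inequality.

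The key step is handling the delay integrals by Fubini and a change of variables. For $\nu\in\{\nu_1,\nu_2\}$,
\begin{equation*}
\int_0^t e^{p\alpha s}\int_{-\infty}^0|x(s+u)|^p\nu(\d u)\d s \le \int_{-\infty}^0 e^{-p\alpha u}\Big(\int_{-\infty}^0 e^{p\alpha v}|\xi(v)|^p\d v\Big)\nu(\d u)\;+\;\nu^{(p\alpha)}\int_0^t e^{p\alpha v}|x(v)|^p\d v .
\end{equation*}
The first term is at most $\|\xi\|_r^p\,\nu^{(p\alpha)}\int_{-\infty}^0 e^{p(\alpha-r)v}\d v$, which is finite because $\alpha<r$ and $\nu\in\CP_{pr}$. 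Substituting back, the coefficient of $\int_0^t e^{p\alpha v}|x(v)|^p\d v$ becomes $-(\alpha_1 - p\alpha - \alpha_2\nu_1^{(p\alpha)}-\alpha_3\nu_2^{(p\alpha)})$. This is negative precisely when $\alpha<\alpha_0$, since the map $\bar\alpha\mapsto \alpha_1-p\bar\alpha-\alpha_2\nu_1^{(p\bar\alpha)}-\alpha_3\nu_2^{(p\bar\alpha)}$ is strictly decreasing and continuous, equals $\alpha_1-\alpha_2-\alpha_3>0$ at $0$, and tends to $-\infty$ (this is where uniqueness of the root $\alpha_0$ comes from). Dropping that term leaves
\begin{equation*}
e^{p\alpha t}\EE|x(t)|^p\le C\|\xi\|_r^p + C e^{p\alpha t},
\end{equation*}
which is the first claim.

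For the segment bound, I write
\begin{equation*}
\|x_t\|_r^p \le \sup_{u\le -t} e^{pru}|\xi(t+u)|^p + \sup_{0\le s\le t} e^{pr(s-t)}|x(s)|^p \le e^{-prt}\|\xi\|_r^p + \sup_{0\le s\le t}e^{pr(s-t)}|x(s)|^p .
\end{equation*}
The pointwise moment bound does not control the supremum directly. I therefore split $[0,t]$ into unit intervals $[m,m+1]$ and, on each, bound $\EE\sup_{s\in[m,m+1]}|x(s)|^p$ by $C(1+\EE|x(m)|^p + \EE\|x_m\|_r^p)$-type terms using BDG and the linear growth from \eqref{mon-f}--\eqref{mon-g}. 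This step is the main obstacle: the circularity through $\|x_m\|_r$ must be broken. I would do so either by redoing the It\^o argument on $[m,m+1]$ with the supremum inside, reusing the Fubini estimate with the already-proven exponential integral bound $\int_0^t e^{p\alpha v}\EE|x(v)|^p\d v\le C(e^{p\alpha t}+\|\xi\|_r^p)$, or by summing $\sum_{m\le t} e^{pr(m-t)}\EE\sup_{[m,m+1]}|x|^p$. The sum $\sum_m e^{pr(m-t)}(1+\|\xi\|_r^p e^{-p\alpha m})$ is then at most $C(1+\|\xi\|_r^p e^{-p\alpha t})$, but only after sacrificing a little rate to absorb polynomial factors (using $\sup\le\sum$ and comparing $e^{-pr(t-m)}$ with $e^{-p\alpha m}$). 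This loss is why the final rate is $\alpha_\varepsilon=\alpha-\varepsilon$ rather than $\alpha$.
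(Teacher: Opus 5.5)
Your strategy for the pointwise bound is the standard one from \cite{WYM17} that the paper defers to: It\^o's formula for $e^{p\alpha t}|x(t)|^p$, the estimate \eqref{4-mon}, and Fubini on the delay terms. Its discrete analogue is the proof of Lemma~\ref{l-bou}.

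\textbf{The Fubini step fails as written.} After the substitution $v=s+u$, you enlarge the initial-segment part to $\int_{-\infty}^0 e^{p\alpha v}|\xi(v)|^p\,\d v$. You then bound it by $\|\xi\|_r^p\,\nu^{(p\alpha)}\int_{-\infty}^0 e^{p(\alpha-r)v}\,\d v$ and call this finite because $\alpha<r$. But for $v\le 0$ and $\alpha<r$ the exponent $p(\alpha-r)v$ is nonnegative, so this integral is $+\infty$. Indeed $\int_{-\infty}^0 e^{p\alpha v}|\xi(v)|^p\,\d v=\infty$ already for $\xi(v)=c\,e^{-rv}\in\CC_r$.

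The fix is to keep the true range $v\in[u,0]$ and not decouple the $u$- and $v$-integrals. Using $|\xi(v)|\le e^{-rv}\|\xi\|_r$,
\begin{equation*}
\int_{-\infty}^0 e^{-p\alpha u}\int_u^0 e^{p\alpha v}|\xi(v)|^p\,\d v\,\nu(\d u)\le\|\xi\|_r^p\int_{-\infty}^0 e^{-p\alpha u}\,\frac{e^{-p(r-\alpha)u}}{p(r-\alpha)}\,\nu(\d u)=\frac{\nu^{(pr)}}{p(r-\alpha)}\,\|\xi\|_r^p .
\end{equation*}
This is where $\alpha<r$ and $\nu_1,\nu_2\in\CP_{pr}$ genuinely enter: the constant involves $\nu^{(pr)}$, not $\nu^{(p\alpha)}$. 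With this repair the first claim follows.

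Also, do not ``normalise'' the coefficient. It is honestly $-\frac{p}{2}\big(\alpha_1-2\alpha-\alpha_2\nu_1^{(p\alpha)}-\alpha_3\nu_2^{(p\alpha)}\big)$, which for $p\ge 2$ is negative whenever $\alpha<\alpha_0$. So the paper's equation with $p\bar\alpha$ is a sufficient, more conservative condition, not the exact sign threshold.

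\textbf{The segment bound.} Your unit-interval route differs from the one the paper intends, and it works better than you say. There is no circularity if, in the It\^o--BDG estimate on $[m,m+1]$, you keep the delay in integral form:
\begin{itemize}
\item For $v\ge 0$, the first part together with $|\xi(w)|\le e^{-rw}\|\xi\|_r$ gives $\EE\int_{-\infty}^0|x(v+u)|^p\,\nu(\d u)\le C(1+\|\xi\|_r^p e^{-p\alpha v})$.
\item $|g(x_v)|^p$ is controlled by \eqref{mon-g} and Jensen's inequality.
\item Finiteness from Theorem~\ref{th2.3} lets you absorb the $\frac{1}{2}\EE\sup$ term.
\end{itemize}
Hence $\EE\sup_{[m,m+1]}|x|^p\le C(1+\|\xi\|_r^p e^{-p\alpha m})$. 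Summing against $e^{-pr(t-m)}$ gives $\|\xi\|_r^p e^{-p\alpha t}\sum_m e^{-p(r-\alpha)(t-m)}\le C\|\xi\|_r^p e^{-p\alpha t}$, since $\alpha<r$ strictly. So your route loses no rate and gives the segment bound with $\alpha$ itself; your explanation of the $\varepsilon$ is misplaced.

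The $\varepsilon$-loss belongs to the paper's route, visible in Lemma~\ref{l-bou}. There one uses $e^{pr(s-t)}\le e^{p\alpha(s-t)}$ and bounds $\EE\sup_{0\le s\le t}e^{p\alpha s}|x(s)|^p$ by a single global BDG estimate. The term $\int_0^t e^{p\alpha s}\EE|x(s)|^p\,\d s$ then produces $C\|\xi\|_r^p\,t$ (compare $\tilde K_8\|\xi\|_r^2 t_N$ there), which must be absorbed by giving up $\varepsilon$. That route is shorter; yours is slightly longer but sharper.
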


\begin{lemma}\label{l4.5}
Let the Assumptions in Lemma \ref{l4.4} hold. Then for the different initial data $\xi, \eta \in \CC_r$, the corresponding solutions $x^{\xi}(t)$ and $x^{\eta}(t)$ satisfy
$$
\EE |x^{\xi}(t)-x^{\eta}(t)|^p \le C \|\xi-\eta\|_r^p e^{-p \beta t}
$$
for any $\beta \in (0,r\wedge \beta_0)$, where $\beta_0$ is the unique positive root to the following equation
$$
\beta_1 - p\bar\beta - \nu_1^{(p\bar\beta)}\beta_2 - \nu_2^{(p\bar\beta)}\beta_3 = 0,
$$
where $\beta_1$, $\beta_2$ and $\beta_3$ are defined in Remark \ref{rmk4.3}.
Furthermore, for any $\e \in (0, \beta)$ 
$$
\EE  \|x^{\xi}_t-x^{\eta}_t\|_r^p  \le  C \|\xi-\eta\|_r^p e^{-p \beta_\e t},
$$
where $\beta_\varepsilon = \beta-\varepsilon$.
\end{lemma}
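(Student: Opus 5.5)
Let $e(t)=x^{\xi}(t)-x^{\eta}(t)$. The plan is to apply It\^o's formula to the weighted functional $e^{p\bar\beta t}|e(t)|^p$ with $\bar\beta\in(\beta,r\wedge\beta_0)$, mimicking what one does for Lemma \ref{l4.4}. Assumptions \ref{dis} and \ref{g} enter through the one-sided estimate \eqref{4-att}. Taking expectations removes the martingale term, which is a true martingale by the moment bounds of Theorem \ref{th2.3}, after a routine localization if needed. This gives
\begin{equation*}
e^{p\bar\beta t}\EE|e(t)|^p\le |\xi(0)-\eta(0)|^p+\EE\int_0^t e^{p\bar\beta s}\Big(\big(p\bar\beta-\tfrac{p}{2}\beta_1\big)|e(s)|^p+\tfrac{p}{2}\sum_{i=1}^{2}\beta_{i+1}\int_{-\infty}^0|e(s+u)|^p\nu_i(\d u)\Big)\d s .
\end{equation*}
(The factor $p/2$ comes from It\^o's formula for $|e|^p$.) Absorbing it into the normalization of $\beta_1,\beta_2,\beta_3$, as the paper's characteristic equation implicitly does, gives the form $\bar\beta$ versus $\beta_1-p\bar\beta-\nu_1^{(p\bar\beta)}\beta_2-\nu_2^{(p\bar\beta)}\beta_3$.

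The key step is to handle the delay integrals by Fubini and a change of variables. For each $\nu_i$,
\begin{equation*}
\int_0^t e^{p\bar\beta s}\int_{-\infty}^0|e(s+u)|^p\nu_i(\d u)\d s
\le \nu_i^{(p\bar\beta)}\int_0^t e^{p\bar\beta s}|e(s)|^p\d s+\nu_i^{(p\bar\beta)}\int_{-\infty}^0 e^{p\bar\beta s}|\xi(s)-\eta(s)|^p\d s .
\end{equation*}
This uses $e^{p\bar\beta s}=e^{p\bar\beta(s+u)}e^{-p\bar\beta u}$ and substitutes $v=s+u$. The initial piece is bounded by $\|\xi-\eta\|_r^p\int_{-\infty}^0 e^{p(\bar\beta-r)s}\d s=C\|\xi-\eta\|_r^p$, which is finite because $\bar\beta<r$, and $\nu_i^{(p\bar\beta)}<\infty$ since $\nu_i\in\CP_{pr}$. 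Since $\bar\beta<\beta_0$ and the map $a\mapsto \beta_1-pa-\nu_1^{(pa)}\beta_2-\nu_2^{(pa)}\beta_3$ is strictly decreasing and positive on $[0,\beta_0)$ (by Remark \ref{rmk4.4} and the continuity and monotonicity of $\nu^{(a)}$), the coefficient of $\int_0^t e^{p\bar\beta s}|e(s)|^p\d s$ is nonpositive. Hence $e^{p\bar\beta t}\EE|e(t)|^p\le C\|\xi-\eta\|_r^p$, which gives the first claim with the rate $\bar\beta$ and a fortiori with any $\beta<\bar\beta$. One could also take $\bar\beta=\beta$ directly.

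For the segment bound we split
\begin{equation*}
\|e_t\|_r^p\le e^{-prt}\|\xi-\eta\|_r^p+\sup_{0\le s\le t}e^{pr(s-t)}|e(s)|^p .
\end{equation*}
The supremum inside the expectation is the main obstacle, because a pointwise-in-time bound does not control it directly. The plan is to partition $[0,t]$ into unit intervals $[m,m+1]$, so that
\begin{equation*}
\EE\sup_{0\le s\le t}e^{pr(s-t)}|e(s)|^p\le\sum_{m=0}^{\lfloor t\rfloor}e^{pr(m+1-t)}\EE\sup_{m\le s\le m+1}|e(s)|^p .
\end{equation*}
On each interval we would use It\^o's formula for $|e|^p$ starting at time $m$, together with BDG, Assumption \ref{g}, and the global Lipschitz-type bound $|f(\phi)-f(\varphi)|$ from the monotonicity setting. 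If $f$ is only locally Lipschitz, we would instead use the one-sided condition as in Theorem \ref{th3.6} and avoid Lipschitz bounds on $f$ altogether. This yields
\begin{equation*}
\EE\sup_{m\le s\le m+1}|e(s)|^p\le C\Big(\EE|e(m)|^p+\int_{m}^{m+1}\EE\int_{-\infty}^0|e(s+u)|^p\nu(\d u)\d s\Big)\le C\|\xi-\eta\|_r^pe^{-p\beta m},
\end{equation*}
where the delay integral is again controlled by the first part and $\nu_i\in\CP_{pr}$. Summing, $\sum_m e^{-pr(t-m)}e^{-p\beta m}\le C(1+t)e^{-p\beta t}$ because $\beta<r$, and $(1+t)e^{-p\beta t}\le C_\varepsilon e^{-p(\beta-\varepsilon)t}$. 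This is exactly where the loss of $\varepsilon$ and the rate $\beta_\varepsilon$ arise.
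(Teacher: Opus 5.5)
\paragraph{Gap in the initial-segment term.}
The Fubini step for the initial segment fails as written. After substituting $v=s+u$, you bound the inner integral $\int_u^{0}e^{p\bar\beta v}|\xi(v)-\eta(v)|^p\,\d v$ by the integral over all of $(-\infty,0]$ and pull out $\nu_i^{(p\bar\beta)}$. The estimate $|\xi(v)-\eta(v)|\le e^{-rv}\|\xi-\eta\|_r$ then leaves the integrand $e^{p(\bar\beta-r)v}$. This blows up as $v\to-\infty$ precisely because $\bar\beta<r$, so $\int_{-\infty}^0 e^{p(\bar\beta-r)v}\,\d v=\infty$; for example, $|\xi(v)-\eta(v)|=e^{-rv}$ is admissible in $\CC_r$. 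Your bound is therefore vacuous, and finiteness holds only when $\bar\beta>r$.

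The fix is to keep the $u$-dependent lower limit:
\begin{equation*}
\int_{-\infty}^0 e^{-p\bar\beta u}\int_u^0 e^{p\bar\beta v}|\xi(v)-\eta(v)|^p\,\d v\,\nu_i(\d u)
\le \|\xi-\eta\|_r^p\int_{-\infty}^0\frac{e^{-pru}-e^{-p\bar\beta u}}{p(r-\bar\beta)}\,\nu_i(\d u)
\le \frac{\nu_i^{(pr)}}{p(r-\bar\beta)}\,\|\xi-\eta\|_r^p .
\end{equation*}
This is where $\nu_i\in\CP_{pr}$ and $\bar\beta<r$ are genuinely used. The hypothesis $\nu_i\in\CP_{pr}$ is needed for $\nu_i^{(pr)}<\infty$, not merely for $\nu_i^{(p\bar\beta)}<\infty$; compare the factors $(2r-\gamma)^{-1}$ in \eqref{5mu1}--\eqref{5nu2}.

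\paragraph{The factor $p/2$.}
You cannot absorb $p/2$ into $\beta_1,\beta_2,\beta_3$, since these are fixed in Remark \ref{rmk4.3}. The correct justification is this: after Fubini, the It\^o drift needs $\beta_1-2\bar\beta-\nu_1^{(p\bar\beta)}\beta_2-\nu_2^{(p\bar\beta)}\beta_3\ge 0$. Because $p\ge 2$, this follows from the paper's condition $\beta_1-p\bar\beta-\nu_1^{(p\bar\beta)}\beta_2-\nu_2^{(p\bar\beta)}\beta_3>0$. With these two corrections your first part goes through.

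\paragraph{The segment bound: comparison with the paper.}
The paper does not write out this proof; it defers to the cited literature. Its intended route is visible in the numerical analogue, Lemma \ref{l-bou}. There one bounds $\|e_t\|_r^p\le e^{-p\beta t}\big(\|\xi-\eta\|_r^p+\sup_{0\le s\le t}e^{p\beta s}|e(s)|^p\big)$ using $\beta<r$. The supremum is then estimated by a single It\^o--BDG argument on $[0,t]$, whose quadratic-variation term grows linearly in $t$. The resulting factor $1+t$ is what costs the $\varepsilon$.

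Your unit-interval decomposition is a legitimate alternative, with two provisos:
\begin{itemize}
\item On each interval you must use only the one-sided bound \eqref{4-att}. Under the hypotheses of Lemma \ref{l4.4}, $f$ is merely locally Lipschitz, so your ``global Lipschitz'' option is unavailable.
\item You must split the delay integral into $v+u\ge 0$, handled by the first part, and $v+u<0$, handled by $\|\xi-\eta\|_r$ together with $\nu_i^{(pr)}$.
\end{itemize}
Your route is also sharper than you claim. Since $\beta<r$, the geometric sum $\sum_{m=0}^{\lfloor t\rfloor}e^{p(r-\beta)m}$ is at most $Ce^{p(r-\beta)t}$. You therefore get $\EE\|e_t\|_r^p\le C\|\xi-\eta\|_r^p e^{-p\beta t}$ with no factor $(1+t)$ and no loss of $\varepsilon$. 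Your closing remark about where the loss arises describes the paper's route, not yours.
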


It is worth noting that in \cite{WYM17}, the authors proved these lemmas for the case $p=2$, 
under the condition
$$2b_1 > 2b_2 \nu_1^{(2r)} + b_3 \nu_2^{(2r)}$$.
By contrast, in the present work we require only the weaker condition
$$
2b_1 > 2b_2 + b_3
$$
for $p=2$. 
The proofs follow closely the arguments in \cite{LMS11, WYM17}, and are omitted here due to page limits. 
These lemmas then allow us to establish exponential ergodicity of \eqref{ISFDE} in the Wasserstein distance.

\begin{theorem}\label{th4.6}
Let the Assumptions in Lemma \ref{l4.4} hold. Then $x^\xi_t$ has a unique IPM $\pi \in \CP_p(\CC_r)$ and
$$
\WW_p(\m P_t, \pi) \le C (1 + \m(\|\cdot\|_r^p) + \pi(\|\cdot\|_r^p))  e^{-\beta_{\e} t},~ \forall \m \in \CP_p(\CC_r),~t \ge 0,
$$
where $\beta_\e$ is defined in {\rm Lemma} \ref{l4.5}.
\end{theorem}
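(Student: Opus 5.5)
The plan is the standard contraction-plus-Cauchy argument in $(\CP_p(\CC_r),\WW_p)$, driven by Lemmas \ref{l4.4} and \ref{l4.5}.

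\textbf{Step 1: contraction of the semigroup.} For $\mu,\nu\in\CP_p(\CC_r)$, take an optimal coupling $\rho\in\Pi(\mu,\nu)$ and draw an $\CF_0$-measurable pair $(\xi,\eta)\sim\rho$, independent of $B$. Drive $x^\xi_t$ and $x^\eta_t$ with the same Brownian motion. Conditioning on $\CF_0$, the second part of Lemma \ref{l4.5} gives
$$\WW_p(\mu P_t,\nu P_t)^p\le \EE\|x^\xi_t-x^\eta_t\|_r^p\le C\,\WW_p(\mu,\nu)^p e^{-p\beta_\e t}.$$
The constant $C$ in Lemma \ref{l4.5} does not depend on the initial data, so this conditioning is legitimate. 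Here we use the Markov property of the segment process, i.e.\ that $x^\xi_t$ depends measurably on $\xi$, so that $\mu P_t$ is the law of $x^\xi_t$ with $\xi\sim\mu$.

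\textbf{Step 2: existence of an IPM.} Take $\mu=\delta_{\mathbf 0}$, or any $\mu\in\CP_p(\CC_r)$. Using the semigroup property $\mu P_{t+s}=(\mu P_s)P_t$ and Step 1,
$$\WW_p(\mu P_{t+s},\mu P_t)\le C\,\WW_p(\mu P_s,\mu)\,e^{-\beta_\e t}.$$
Lemma \ref{l4.4} gives the uniform bound $\sup_s\mu P_s(\|\cdot\|_r^p)\le C(1+\mu(\|\cdot\|_r^p))$. Hence
$$\WW_p(\mu P_s,\mu)\le \bigl(\mu P_s(\|\cdot\|_r^p)\bigr)^{1/p}+\bigl(\mu(\|\cdot\|_r^p)\bigr)^{1/p}\le C\bigl(1+\mu(\|\cdot\|_r^p)\bigr)$$
uniformly in $s$. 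Therefore $\{\mu P_t\}$ is Cauchy in the complete space $(\CP_p(\CC_r),\WW_p)$ and converges to some $\pi$.

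To show invariance, I will use Feller continuity, which follows from Step 1 since $\WW_p(\mu P_t,\nu P_t)\to0$ as $\nu\to\mu$. Then
$$\pi P_s=\lim_t \mu P_tP_s=\lim_t\mu P_{t+s}=\pi.$$
Fatou's lemma (lower semicontinuity of $\nu\mapsto\nu(\|\cdot\|_r^p)$ under weak convergence) gives $\pi(\|\cdot\|_r^p)\le C<\infty$, so $\pi\in\CP_p(\CC_r)$.

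\textbf{Step 3: rate and uniqueness.} For any $\mu\in\CP_p(\CC_r)$, Step 1 gives
$$\WW_p(\mu P_t,\pi)=\WW_p(\mu P_t,\pi P_t)\le C\,\WW_p(\mu,\pi)e^{-\beta_\e t}.$$
Bounding $\WW_p(\mu,\pi)\le C\bigl(1+\mu(\|\cdot\|_r^p)+\pi(\|\cdot\|_r^p)\bigr)$ via the product coupling and $a^{1/p}\le 1+a$ yields the stated estimate. Uniqueness within $\CP_p(\CC_r)$ follows by applying this to a second invariant measure $\pi'$: $\WW_p(\pi',\pi)\to0$.

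\textbf{Main obstacle.} I expect the delicate points to be in Step 1. First, passing from the pathwise bound of Lemma \ref{l4.5} for deterministic initial data to random initial data; the independence of the constant from $\xi,\eta$ handles this. Second, measurability of $\xi\mapsto x^\xi_t$, which comes from the strong Markov property cited from \cite{WYM17}.
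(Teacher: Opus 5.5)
Your proposal is correct and follows essentially the same route as the paper. Lemma~\ref{l4.5} gives the contraction $\WW_p(\mu P_t,\nu P_t)\le C\,\WW_p(\mu,\nu)e^{-\beta_\e t}$, Lemma~\ref{l4.4} gives a uniform bound on $\WW_p(\mu,\mu P_s)$, the semigroup property makes $\{\mu P_t\}$ Cauchy, and invariance, uniqueness and the rate then follow from the contraction. The differences are cosmetic. You realize the gluing through a synchronous coupling started from an optimal coupling of the initial laws, where the paper invokes convexity of the Wasserstein distance. You also deduce invariance from the $\WW_p$-Lipschitz continuity of $\nu\mapsto\nu P_s$, rather than from the Feller property tested on $C_b(\CC_r)$.
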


\begin{proof}
For any $t \ge 0$, 
in view of Lemma \ref{l4.4}, one has $\mu P_t \in \CP_{p}(\CC_r)$ for any $\mu \in \CP_p(\CC_r)$ and
$$
\begin{aligned}
\WW_p^p(\de_\xi, \de_\eta P_t)  & \le \int_{\CC_r \K \CC_r} \|\p - \f\|_r^p \de_\xi(\d \p) \de_{\eta}P_t(\d \f)  \\
& \le C \int_{\CC_r \K \CC_r} (  \|\p\|_r^p  +  \|\f\|_r^p ) \de_\xi(\d \p) \de_{\eta}P_t(\d \f)   \\
& \le C (\|\xi\|_r^p  +  \EE \|x^{\eta}_t\|_r^p)  \\
& \le C ( 1 + \|\xi\|_r^p  + \|\eta\|_r^p ).
\end{aligned}
$$
Thus, using the convexity of Wasserstein distance $\WW_p$ gives
\begin{equation}\label{4-5}
\begin{aligned}
    \WW_p(\m, \m P_t) & \le \int_{\mathcal{C}_r\times\mathcal{C}_r} \mathbb{W}_p(\delta_{\xi},\delta_{\eta}P_t) \mu(\mathrm{d} \xi)\mu(\mathrm{d} \eta) \\
    & \le  C \int_{\mathcal{C}_r\times\mathcal{C}_r} (1+\|\xi\|_r^p + \|\eta\|_r^p)^{\frac{1}{p}} \mu(\mathrm{d} \xi)\mu(\mathrm{d} \eta) \\
    &\le C \big(  1 + 2 \m( \|\cdot\|_r)  \big) < \infty,
\end{aligned}
\end{equation}
where
$$
   \mu( \| \cdot \|_r ) = \int_{\CC_r} \|\phi\|_r  \mu(\d \phi).
$$
For any $\xi, \eta \in \CC_r$, Lemma \ref{l4.5} implies
\begin{equation*}
\WW_p(\de_\xi P_t, \de_\eta P_t) \le \left(  \EE \|x^{\xi}_t - x^{\eta}_t\|^p  \right)^{1/p} \le C \|\xi - \eta\|_r e^{- \beta_{\e} t}.
\end{equation*}
Then, using the convexity of Wasserstein distance $\WW_p$ again, one derives that
\begin{equation}\label{Wp-contraction}
\WW_p(\mu P_t, \nu P_t) \le  C \WW_p(\mu , \nu) e^{- \beta_{\e} t}, \quad \mu, \nu \in \CP_p(\CC_r).
\end{equation}
By the semigroup property of $\{P_t\}_{t \ge 0}$, one can further derive from \eqref{4-5} and \eqref{Wp-contraction} that for any $t, s > 0$, 
$$
\WW_p(\m P_t, \m P_{t+s}) \le C \WW_p(\m, \m P_s) e^{-\beta_\e t} \le C \big(  1 + 2 \mu( \|\cdot\|_r)  \big) e^{-\beta_\e t},
$$
which means $\{ \mu P_t\}_{t \ge 0}$ is a Cauchy sequence in the Polish space $(\CP_p(\CC_r), \WW_p)$. 
Then there exists a probability measure $\pi \in \CP_{p}(\CC_r)$ such that $\m P_t$ converges weakly to $\pi$ as $t \ra \8$. For any $h \in C_b(\CC_r)$, it follows from the  Feller property of $P_t$ and Chapman–Kolmogorov equation of the transition probabilities that
$$
\begin{aligned}
\pi(P_t h) = \lim_{s \to \infty} (\mu P_s)(P_t h) = \lim_{s \to \infty} (\mu P_{t+s})(h) = \pi(h),
\end{aligned}
$$
which implies that $\pi$ is an IPM of $\{P_t\}_{t\ge 0}$.
In addition, it follows from \eqref{Wp-contraction} that the IPM is unique and 
$$
\begin{aligned}
\WW_p(\m P_t, \pi) = \WW_p(\m P_t, \pi P_t) \le C  \WW_p(\m, \pi) e^{-\beta_\e t} \le 
C (1 + \mu(\|\cdot\|_r^p) + \pi(\|\cdot\|_r^p)) e^{-\beta_\e t}.
\end{aligned}
$$
The proof is complete.
\end{proof}
$\hfill\square$

\subsection{Numerical Ergodicity and Convergence of Numerical IPM}\label{S5.1}
This subsection is devoted to the approximation of IPMs. Under Assumption \ref{a3.1}, we redefine the TEM scheme and establish suitable Lipschitz properties for the analysis of numerical ergodicity. We first establish the existence, uniqueness, and exponential ergodicity of the numerical IPM. Combining these results with the finite-time strong convergence established in Section \ref{S3}, we then employ the triangle inequality framework developed in \cite{MST10, YZ24}  to establish the convergence of the numerical IPM to the exact one.

For any $R \ge 0$, it follows from Assumption \ref{a3.1} that there exists an increasing function $\bar \Lambda : [0, \8) \ra \RR_+$ such that
\begin{equation}\label{6-trun}
      |f(\phi)-f(\varphi)| \le \bar \Lambda(R)  \int_{-\infty}^0 |\phi(u)-\varphi(u)| \mu_2(\d u).
\end{equation}
for any $\phi,~\varphi \in \mathcal{C}_r $ with $\|\phi\|_r \vee \|\varphi\|_r \le R$.
Clearly, $\bar \Lambda^{-1}:[\Lambda(0),\infty)\rightarrow\mathbb{R}_+$. Then, define truncation mapping $\bar\Pi^{\Delta}:\mathbb{R}^n\rightarrow \mathbb{R}^n$ by
\begin{align}\label{6-L}
\bar\Pi^{\Delta}(x)=\bigg( |x|\wedge \bar\Lambda^{-1}\left(L \Delta^{-\theta}\right)  \bigg)\frac{x}{|x|}, \quad \forall x\in\mathbb{R}^n,
\end{align}
where $x/|x|=0$ if $x=0$,  the constant $\theta \in (0,1/2)$ and $L=1 \vee |f(\mathbf{0})|$.
Define the TEM scheme by
\begin{align}\label{6-TEM}
\begin{cases}
W^{\xi,k,\Delta}(t_j)=\xi(t_j), ~ j = -kl, \cdots, 0,\\
V^{\xi,k,\Delta}({t_j})=\bar\Pi^{\Delta}(W^{\xi,k,\Delta}(t_j)), ~j \ge  -kl,\\
W^{\xi,k,\Delta}(t_{j+1})=V^{\xi,k,\Delta}(t_j)+f(V^{\xi,k,\Delta}_{t_j})\D
+ g( V^{\xi,k,\Delta}_{t_j}) \Delta B_j, ~ j=0,1, \dots, \\
\end{cases}
\end{align}
where $\D B_j=B(t_{j+1})-B(t_j)$ and $V^{\xi,k,\Delta}_{t_j}$ is a $\CC_r$-valued random variable defined by
\begin{equation}\label{6-LI}
V^{\xi,k,\D}_{t_j}(u)=\left\{
\begin{aligned}
&\frac{t_{m+1}-u}{\D} V^{\xi,k,\D}(t_{j+m}) + \frac{u-t_m}{\D} V^{\xi,k,\D}(t_{j+m+1}), \\
&~~~~~~~~~~~~~~~~~~~~t_m \le u \le t_{m+1}, -kl \le m \le -1, \\
&V^{\xi,k,\D}(t_j - k), \quad u < -k.
\end{aligned}\right.
\end{equation}
In addition, the TEM numerical solution and the TEM numerical segment are defined by
\begin{equation}\label{6-step}
V^{\xi,k,\D}(t)=V^{\xi,k,\D}(t_j),  ~ V^{\xi,k,\D}_t=V^{\xi,k,\D}_{t_j},\quad t \in [t_j, t_{j+1})
\end{equation}
for any integer $j \ge 0$. 
For any $\Delta \in (0,1]$, it follows from \eqref{6-trun} and \eqref{6-TEM} that for any $\xi, \eta \in \CC_r$ and $j \ge 0$,
\begin{equation}\label{6-f-lip}
 | f \big(V^{\xi,k,\Delta}_{t_j}\big) - f \big(V^{\eta,k,\Delta}_{t_j}\big) |    
\le   L \Delta^{-\theta}   \int_{-\infty}^0 |V^{\xi,k,\Delta}_{t_j}(u)-V^{\eta,k,\Delta}_{t_j}(u)| \mu_2(\d u),
\end{equation}
and
\begin{equation}\label{6-f-lin}
 | f \big(V^{\xi,k,\Delta}_{t_j}\big) |    
\le   L \Delta^{-\theta}  \left(1 +  \int_{-\infty}^0 |V^{\xi,k,\Delta}_{t_j}(u)| \mu_2(\d u) \right).\end{equation}

It follows from \eqref{6-f-lin} that $V^{\xi,k,\Delta}_{t_j}$ satisfies \eqref{f-lin}. Consequently, Theorem \ref{th3.4}  holds for the numerical solution $V^{\xi,k,\Delta}(t)$
of the scheme \eqref{6-TEM} under Assumptions \ref{a3.1}, \ref{dis} and \ref{g}.  We now show that the TEM numerical segment process is a homogeneous Markov process. The proof is omitted since it follows from standard arguments (see, e.g., \cite{SWMW24}).

\begin{theorem}
  Assume Assumptions \ref{a2.1-f}, \ref{a2.1-g} and \ref{a2.2} hold. Then the TEM numerical segment process $\{V^{\xi,k,\Delta}_{t_n}\}_{n \ge 0}$ is a time homogeneous Markov chain, i.e., 
\begin{equation}\label{5.1}
  \mathbb{P} \big\{V^{\xi,k,\Delta}_{t_{n+1}} \in A  \big|  V^{\xi,k,\Delta}_{t_n}=\eta  \big\}
  = \mathbb{P} \big\{ V^{\xi,k,\Delta}_{t_1}\in A  \big| V^{\xi,k,\Delta}_0=\eta  \big\}
\end{equation}
  and
  \begin{equation}\label{5.2}
  \mathbb{P} \big\{  V^{\xi,k,\Delta}_{t_{n+1}} \in A  \big|  \mathcal{F}_{t_n} \big\} = \mathbb{P} \big\{ V^{\xi,k,\Delta}_{t_{n+1}} \in A  \big|  V^{\xi,k,\Delta}_{t_n} \big\}.
  \end{equation}
  for any $n > 0$, $\xi \in \mathcal{C}_r$ and $A \in \mathcal{B}(\mathcal{C}_r)$. 
\end{theorem}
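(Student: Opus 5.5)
The plan is to exhibit the one-step transition as a deterministic measurable map of the current segment and an independent Gaussian increment, and then apply the standard lemma on conditional expectations of functions of a measurable variable and an independent variable.

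\textbf{Step 1: the one-step map.} By \eqref{6-TEM} and \eqref{6-LI}, the segment $V^{\xi,k,\Delta}_{t_n}$ is the piecewise linear interpolation of the $kl+1$ nodes $V^{\xi,k,\Delta}(t_{n-kl}),\dots,V^{\xi,k,\Delta}(t_n)$, extended constantly to the left of $-k$. Hence these nodes are recovered from the segment by evaluation, $V^{\xi,k,\Delta}(t_{n+m})=V^{\xi,k,\Delta}_{t_n}(t_m)$ for $-kl\le m\le 0$. The next segment $V^{\xi,k,\Delta}_{t_{n+1}}$ is built from $V^{\xi,k,\Delta}(t_{n-kl+1}),\dots,V^{\xi,k,\Delta}(t_n)$ together with the new node
\[
V^{\xi,k,\Delta}(t_{n+1})=\bar\Pi^\Delta\big(\phi(0)+f(\phi)\Delta+g(\phi)\Delta B_n\big),\qquad \phi=V^{\xi,k,\Delta}_{t_n}.
\]
Define $\Phi:\CC_r\times\RR^d\to\CC_r$ by letting $\Phi(\phi,z)$ be the interpolated segment with nodes $\phi(t_{-kl+1}),\dots,\phi(0)$ and $\bar\Pi^\Delta(\phi(0)+f(\phi)\Delta+g(\phi)z)$. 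Then
\[
V^{\xi,k,\Delta}_{t_{n+1}}=\Phi\big(V^{\xi,k,\Delta}_{t_n},\Delta B_n\big).
\]
The map $\Phi$ does not depend on $n$ or $\xi$, which will give time homogeneity.

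\textbf{Step 2: measurability of $\Phi$.} Evaluation maps $\phi\mapsto\phi(u)$ are $1$-Lipschitz up to the factor $e^{-ru}$, so they are continuous on $\CC_r$. The map $f$ is continuous by Assumption \ref{a2.1-f}, $g$ by Assumption \ref{a2.1-g}, and $\bar\Pi^\Delta$ is continuous. The interpolation map from $(\RR^n)^{kl+1}$ to $\CC_r$ is linear and bounded in $\|\cdot\|_r$. Therefore $\Phi$ is continuous, hence Borel.

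\textbf{Step 3: Markov property \eqref{5.2}.} By induction, $V^{\xi,k,\Delta}_{t_n}$ is $\CF_{t_n}$-measurable, while $\Delta B_n$ is independent of $\CF_{t_n}$. The freezing lemma then gives
\[
\PP\{V^{\xi,k,\Delta}_{t_{n+1}}\in A\mid\CF_{t_n}\}=\PP\{\Phi(\eta,\Delta B_n)\in A\}\big|_{\eta=V^{\xi,k,\Delta}_{t_n}}.
\]
This right-hand side is $\sigma(V^{\xi,k,\Delta}_{t_n})$-measurable. Conditioning on $V^{\xi,k,\Delta}_{t_n}$ by the tower property yields \eqref{5.2}.

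\textbf{Step 4: homogeneity \eqref{5.1}.} The increments $\Delta B_n$ and $\Delta B_0$ are both $N(0,\Delta I_d)$-distributed. Hence $\PP\{\Phi(\eta,\Delta B_n)\in A\}=\PP\{\Phi(\eta,\Delta B_0)\in A\}$, and the latter equals the one-step probability started from $\eta$, which is \eqref{5.1}.

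\textbf{Main obstacle.} A segment $\eta\in\CC_r$ is generally not of interpolated form. For \eqref{5.1} the conditioning value is a realised segment $V^{\xi,k,\Delta}_{t_n}$, which is always interpolated, so this is harmless. For a general $\eta$, I would interpret $V_0=\eta$ as the chain started from $\xi=\eta$, with nodes $\bar\Pi^\Delta(\eta(t_j))$. It is then necessary to check that the one-step law from $\xi=\eta$ coincides with $\Phi(\eta,\cdot)$ whenever $\eta$ is already interpolated and truncated. That coincidence holds because $\bar\Pi^\Delta$ is idempotent, so re-truncating nodes that were already truncated changes nothing.
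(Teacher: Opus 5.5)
Your proposal is correct and is essentially the standard argument the paper invokes without writing out (it omits the proof and points to the literature). You write $V^{\xi,k,\Delta}_{t_{n+1}}=\Phi\big(V^{\xi,k,\Delta}_{t_n},\Delta B_n\big)$ with a continuous map $\Phi$ independent of $n$, freeze the $\mathcal{F}_{t_n}$-measurable argument using independence of $\Delta B_n$, and obtain homogeneity from $\Delta B_n$ and $\Delta B_0$ both being $N(0,\Delta I_d)$. Your remark that conditioning on $V_0=\eta$ should be read for interpolated segments with truncated nodes, where $\bar\Pi^\Delta$ is idempotent, correctly settles a point the paper leaves implicit.
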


\begin{lemma}\label{l-bou}
Let Assumptions \ref{a3.1}, \ref{dis} and \ref{g} hold with $\mu_2 \in \CP_{2r}$ and $2b_1 > 2b_2 + b_3$.
Then for any $\gamma \in (0, (2r)\wedge\gamma_0)$,
$$
\sup_{k\ge 1} \sup_{\D \in (0,\Delta_1]} \sup_{t \ge 0} \EE|V^{\xi,k,\D}(t)|^2 \le \tilde K_{2} \left( 1 + \|\xi\|_r^2 e^{-\gamma t}\right),
$$
where
$$
\Delta_1 = \left( \frac{\alpha_1-\alpha_2-\alpha_3}{2L^2} \right)^{\frac{1}{1-2\theta}},
$$
$\gamma_0$ is the unique root of the following equation
$$H_{\bar\gamma,\Delta_1} := \alpha_1 - \bar\gamma - e^{\bar\gamma \Delta_1} \left(  \nu_1^{(\bar\gamma\Delta_1)} \alpha_2 + \nu_2^{(\bar\gamma \Delta_1)} \alpha_3 + 2L^2\mu_2^{(\bar\gamma\Delta_1)} \Delta_1^{1-2\theta} \right)=0.$$
Moreover, for any $\varepsilon \in (0, \gamma)$, 
\begin{equation}\label{5bou-2}
\sup_{k\ge 1} \sup_{\D \in (0,\Delta_1]} \sup_{t \ge 0} \EE \| V^{\xi,k,\D}_t \|_r^2 \le C(1 + \|\xi\|_r^2 e^{-(\gamma-\varepsilon)t}).
\end{equation}
\end{lemma}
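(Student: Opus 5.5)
We will work at the grid points and use a discrete exponentially weighted Lyapunov argument. Write $V_j:=V^{\xi,k,\Delta}(t_j)$ and $W_{j+1}:=W^{\xi,k,\Delta}(t_{j+1})$. The truncation $\bar\Pi^\Delta$ only shrinks vectors, so $|V_{j+1}|\le|W_{j+1}|$. Expanding the square in \eqref{6-TEM} and taking conditional expectation, the martingale terms vanish:
\begin{equation*}
\EE|V_{j+1}|^2\le \EE\Big(|V_j|^2+2\Delta\langle V_j,f(V_{t_j})\rangle+\Delta|g(V_{t_j})|^2+\Delta^2|f(V_{t_j})|^2\Big),
\end{equation*}
where $V_{t_j}:=V^{\xi,k,\Delta}_{t_j}$. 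We then insert \eqref{mon-f} and \eqref{mon-g} with $p=2$; for $p=2$ these give exactly the structure of \eqref{4-mon} with constants $K_1,\alpha_1,\alpha_2,\alpha_3$. The term $\Delta^2|f|^2$ is handled by \eqref{6-f-lin}, using Jensen against $\mu_2$ and $(1+x)^2\le 2+2x^2$:
\begin{equation*}
\Delta^2|f(V_{t_j})|^2\le 2L^2\Delta^{2-2\theta}\Big(1+\int_{-\infty}^0|V_{t_j}(u)|^2\mu_2(\d u)\Big).
\end{equation*}
This produces the factor $2L^2\Delta^{1-2\theta}$ multiplying $\Delta$, which is the origin of $\Delta_1$ and of the last term in $H_{\bar\gamma,\Delta_1}$. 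Collecting terms, we aim for
\begin{equation*}
\EE|V_{j+1}|^2\le \EE|V_j|^2(1-\alpha_1\Delta)+C\Delta+\Delta\,\EE\int_{-\infty}^0|V_{t_j}(u)|^2\big(\alpha_2\nu_1+\alpha_3\nu_2+2L^2\Delta^{1-2\theta}\mu_2\big)(\d u).
\end{equation*}

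Next, multiply by $e^{\bar\gamma t_{j+1}}$ and use $e^{\bar\gamma t_{j+1}}-e^{\bar\gamma t_j}\le \bar\gamma\Delta e^{\bar\gamma t_{j+1}}$. Summing over $j=0,\dots,n-1$ gives
\begin{equation*}
e^{\bar\gamma t_n}\EE|V_n|^2\le|\xi(0)|^2+Ce^{\bar\gamma t_n}/\bar\gamma+\Delta\sum_j e^{\bar\gamma t_{j+1}}\Big[(\bar\gamma-\alpha_1)\EE|V_j|^2+\text{(memory terms)}\Big].
\end{equation*}
The memory integrals are then rearranged by a Fubini-type exchange.

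\begin{itemize}
\item On $[-k,0]$, $V_{t_j}(u)$ is a convex combination of the grid values $V_{j+m}$ and $V_{j+m+1}$, so $|V_{t_j}(u)|^2\le$ the corresponding convex combination of $|V_{j+m}|^2,|V_{j+m+1}|^2$.
\item For $u<-k$ the segment is frozen at $V_{j-kl}$.
\item Grid values with negative index equal $\bar\Pi^\Delta\xi$, which is bounded by $|\xi|$, so they contribute at most $C\|\xi\|_r^2$ times $\nu^{(\bar\gamma)}$-type constants.
\end{itemize}

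Exchanging the order of summation, each $\EE|V_i|^2$ with $i\ge0$ appears with total weight at most $e^{\bar\gamma t_{i+1}}\Delta\, e^{\bar\gamma\Delta}\big(\nu_1^{(\bar\gamma)}\alpha_2+\nu_2^{(\bar\gamma)}\alpha_3+2L^2\Delta^{1-2\theta}\mu_2^{(\bar\gamma)}\big)$. The interpolation shift costs the extra $e^{\bar\gamma\Delta}$, and the frozen tail is dominated because $e^{-\bar\gamma u}\ge e^{\bar\gamma k}$ for $u<-k$. This exchange is where the paper's $\nu^{(\bar\gamma\Delta_1)}$ indexing comes from, and I will match it in the final write-up. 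Since $\gamma<2r$ and $\nu_1,\nu_2,\mu_2\in\CP_{2r}$, these exponential moments are finite.

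The total coefficient of $\EE|V_i|^2$ is then $-H_{\bar\gamma,\Delta}$, evaluated at $\bar\gamma=\gamma$. It is nonpositive for $\gamma<\gamma_0$ and $\Delta\le\Delta_1$, because $H$ is decreasing in $\Delta$ and in $\bar\gamma$, and $H_{0,\Delta_1}=\alpha_1-\alpha_2-\alpha_3-2L^2\Delta_1^{1-2\theta}>0$ by the choice of $\Delta_1$ (which is half of $\alpha_1-\alpha_2-\alpha_3$). Dropping these terms yields $\EE|V_n|^2\le C(1+\|\xi\|_r^2e^{-\gamma t_n})$ with constants independent of $k$ and $\Delta$, and the piecewise-constant extension \eqref{6-step} gives the bound for all $t$. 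I expect the main obstacle to be this careful bookkeeping of the summation exchange, so that the constant exactly matches $H_{\bar\gamma,\Delta_1}$ and stays uniform in $k$.

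For \eqref{5bou-2}, the target bound is on $\sup_t\EE\|V_t\|_r^2$, so we bound the expected value of each weighted grid term first and only then take the supremum, without moving a supremum inside the expectation. Write $\|V_{t_n}\|_r^2\le \sup_{-kl\le m\le 0} e^{2rt_m}|V_{n+m}|^2\vee e^{-2rk}|V_{n-kl}|^2$. For each fixed $m$ with $n+m\ge0$, the first part gives
\begin{equation*}
e^{2rt_m}\EE|V_{n+m}|^2\le C\big(1+\|\xi\|_r^2 e^{2rt_m}e^{-\gamma t_{n+m}}\big)\le C\big(1+\|\xi\|_r^2 e^{-\gamma t_n}\big),
\end{equation*}
using $e^{(2r-\gamma)t_m}\le1$ since $t_m\le0$ and $\gamma<2r$. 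For $n+m<0$ the grid value comes from the initial data, and $e^{2rt_m}|\xi(t_{n+m})|^2\le e^{-2rt_n}\|\xi\|_r^2\le e^{-(\gamma-\varepsilon)t_n}\|\xi\|_r^2$. The frozen tail $e^{-2rk}|V_{n-kl}|^2$ is treated the same way. The remaining difficulty is the step from these per-term bounds to $\EE\|V_{t_n}\|_r^2$, which involves a supremum over $m$. I would handle it by summing the per-term bounds with a weight $e^{\varepsilon t_m}$, the same way the paper's Lemma \ref{l4.4} obtains the loss from $\gamma$ to $\gamma-\varepsilon$. If that sum does not close, I would fall back on a supremum bound over each unit window obtained from the BDG inequality.
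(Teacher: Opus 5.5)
Your estimate of the grid values $\mathbb{E}|V^{\xi,k,\Delta}(t_n)|^2$ follows the paper's own argument: expand the square using $|V^{\xi,k,\Delta}(t_{j+1})|\le|W^{\xi,k,\Delta}(t_{j+1})|$, insert \eqref{4-mon} with $p=2$ and \eqref{6-f-lin}, weight by $e^{\gamma t_{j+1}}$, exchange the memory sums, and use the sign of $H_{\gamma,\Delta}$. Two bookkeeping remarks on this part.

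First, the exchange you describe honestly yields $e^{\gamma\Delta}\nu_1^{(\gamma)}$, $e^{\gamma\Delta}\nu_2^{(\gamma)}$ and $e^{\gamma\Delta}\mu_2^{(\gamma)}$. Indeed, with $i=j+m$ one has $e^{\gamma t_{j+1}}=e^{\gamma t_{i+1}}e^{-\gamma t_m}$, and $e^{-\gamma t_m}\le e^{\gamma\Delta}e^{-\gamma u}$ on $[t_m,t_{m+1}]$. These moments dominate the $(\gamma\Delta)$-indexed ones in the statement, so you cannot ``match'' that indexing. What your argument actually proves is the bound for $\gamma$ below the root of the $\nu^{(\bar\gamma)}$-version of $H$.

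Second, with $\Delta_1$ as defined, $2L^2\Delta_1^{1-2\theta}=\alpha_1-\alpha_2-\alpha_3$ exactly, not half of it, so $H_{0,\Delta_1}=0$. You only need $H_{\gamma,\Delta}\ge H_{\gamma,\Delta_1}>H_{\gamma_0,\Delta_1}=0$ for $\gamma<\gamma_0$. That presupposes a positive root $\gamma_0$, and hence a somewhat smaller $\Delta_1$.

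The genuine gap is in \eqref{5bou-2}. $\|V^{\xi,k,\Delta}_{t_n}\|_r^2$ is itself a supremum over the $kl+1$ weighted grid values, so $\mathbb{E}\|V^{\xi,k,\Delta}_{t_n}\|_r^2$ is an expectation of a supremum. Your per-$m$ bounds only control $\sup_m\mathbb{E}(\cdot)$, and there is no way to take the supremum after the expectation. Replacing the supremum by a sum, with or without weights $e^{\varepsilon t_m}$, does not close. Each unit of past time contains $1/\Delta$ grid points whose second moments are of order one, so the bound $\sum_m e^{2rt_m}\mathbb{E}|V^{\xi,k,\Delta}(t_{n+m})|^2$ is of order $\Delta^{-1}$ and uniformity in $\Delta$ is lost. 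A genuine maximal inequality is needed.

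The paper obtains it as follows.
\begin{itemize}
\item It returns to the pathwise weighted inequality \eqref{eX2}. There the drift is bounded pathwise through \eqref{mon-f}, giving dissipation coefficient $\tilde H_{\gamma,\Delta}\ge H_{\gamma,\Delta}>0$ for small $\rho$, while $|g\Delta B_j|^2$, $R^{\xi,k,\Delta}_{1,j}$ and $R^{\xi,k,\Delta}_{2,j}$ are kept.
\item The Burkholder--Davis--Gundy and Young inequalities absorb $\frac12\mathbb{E}\sup_{n\le N}e^{\gamma t_n}|V^{\xi,k,\Delta}(t_n)|^2$.
\item The remaining sums $\Delta\sum_j e^{\gamma t_{j+1}}\mathbb{E}|g(V^{\xi,k,\Delta}_{t_j})|^2$ and $\Delta^2\sum_j e^{\gamma t_{j+1}}\mathbb{E}|f(V^{\xi,k,\Delta}_{t_j})|^2$ are controlled by the first part.
\end{itemize}
This gives $\mathbb{E}\sup_{n\le N}e^{\gamma t_n}|V^{\xi,k,\Delta}(t_n)|^2\le C\big(e^{\gamma t_N}+\|\xi\|_r^2(1+t_N)\big)$. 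Then \eqref{5bou-2'}, which uses $e^{2rt_{n-N}}\le e^{\gamma(t_n-t_N)}$ for $\gamma<2r$, yields $\mathbb{E}\|V^{\xi,k,\Delta}_{t_N}\|_r^2\le C\big(1+\|\xi\|_r^2(1+t_N)e^{-\gamma t_N}\big)$. The factor $t_N$ is where the loss from $\gamma$ to $\gamma-\varepsilon$ comes from.

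Your fallback would also work: a BDG maximal bound on unit windows of the past, weighted by $e^{-2ri}$ (summable since $\gamma<2r$), with the negative-time part handled directly by $\|\xi\|_r^2e^{-2rt_n}$. But that is precisely the step that carries the proof, and your proposal only names it.
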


\begin{proof}
\textbf{Estimate $\EE|V^{\xi,k,\D}(t_n)|^2$.} Fix $k \ge 1$, $\Delta \in (0, \Delta_1]$ arbitrarily.
By \eqref{TEM}, for any $j \ge 0$,
\begin{equation*}
\begin{aligned}
  \left| V^{\xi,k,\D}(t_{j+1}) \right|^2  
    &  \le  \left| V^{\xi,k,\D}(t_j) \right|^2 + \big| f(V^{\xi,k,\D}_{t_j}) \big|^2 \D^2 + \big| g(V^{\xi,k,\D}_{t_j}) \Delta B_j \big|^2  \\
      &+ 2 \big\langle V^{\xi,k,\D}(t_j),  f(V^{\xi,k,\D}_{t_j}) \big\rangle \D + R^{\xi,k,\Delta}_{1,j} +  R^{\xi,k,\Delta}_{2,j},
\end{aligned}
\end{equation*}
where
$$
R^{\xi,k,\Delta}_{1,j} = 2 \big\langle V^{\xi,k,\Delta}(t_j), g(V^{\xi,k,\Delta}_{t_j}) \Delta B_j \rangle, \  R^{\xi,k,\Delta}_{2,j} = 2 \big\langle  f(V^{\xi,k,\Delta}_{t_j}) \Delta, g(V^{\xi,k,\Delta}_{t_j}) \Delta B_j \rangle.
$$
For any $\gamma \in (0, (2r)\wedge \gamma_0)$, using of $1- e^{-x} \le x$ for any $x \in \mathbb{R}_+$, we obtain
\begin{equation*}
\begin{aligned}
   & e^{\gamma t_{j+1}} \left| V^{\xi,k,\D}(t_{j+1}) \right|^2   -  e^{\gamma t_j} \left| V^{\xi,k,\D}(t_j) \right|^2  \\
\le  & \gamma \Delta e^{\gamma t_{j+1}}  \left| V^{\xi,k,\D}(t_j) \right|^2  + e^{\gamma t_{j+1}}  \left(  2 \big\langle V^{\xi,k,\D}(t_j),  f(V^{\xi,k,\D}_{t_j}) \big\rangle \D + \big| g(V^{\xi,k,\D}_{t_j}) \Delta B_j \big|^2 \right)  \\
      &+e^{\gamma t_{j+1}} \left( \big| f(V^{\xi,k,\D}_{t_j}) \big|^2 \D^2  + R^{\xi,k,\Delta}_{1,j} +  R^{\xi,k,\Delta}_{2,j} \right).
\end{aligned}
\end{equation*}
Summing the above inequality from $j=0$ to $n-1$ with $n \ge 1$ yields
\begin{equation}\label{eX}
\begin{aligned}
   e^{\gamma t_n} \left| V^{\xi,k,\D}(t_n) \right|^2  
\le  & |\xi(0)|^2  +  \gamma \Delta \sum_{j=0}^{n-1} e^{\gamma t_{j+1}}  \left| V^{\xi,k,\D}(t_j) \right|^2  \\
 +  & \sum_{j=0}^{n-1}e^{\gamma t_{j+1}}  \left(  2 \big\langle V^{\xi,k,\D}(t_j),  f(V^{\xi,k,\D}_{t_j}) \big\rangle \D + \big| g(V^{\xi,k,\D}_{t_j}) \Delta B_j \big|^2 \right)  \\
+   &  \sum_{j=0}^{n-1} e^{\gamma t_{j+1}} \left( \big| f(V^{\xi,k,\D}_{t_j}) \big|^2 \D^2  + R^{\xi,k,\Delta}_{1,j} +  R^{\xi,k,\Delta}_{2,j} \right).
\end{aligned}
\end{equation}
Using \eqref{4-mon} with $p=2$ together with \eqref{6-f-lin}, one obtains
\begin{equation}\label{5bou-1'}
\begin{aligned}
   & e^{\gamma t_n} \EE \left| V^{\xi,k,\D}(t_n) \right|^2  \\
\le  & |\xi(0)|^2  +  \gamma \Delta \sum_{j=0}^{n-1} e^{\gamma t_{j+1}}  \EE \left| V^{\xi,k,\D}(t_j) \right|^2  +  \sum_{j=0}^{n-1} e^{\gamma t_{j+1}}  \EE \big| f(V^{\xi,k,\D}_{t_j}) \big|^2 \D^2  \\
   &  +  \Delta \sum_{j=0}^{n-1} e^{\gamma t_{j+1}}  \EE \left(  2 \big\langle V^{\xi,k,\D}(t_j),  f(V^{\xi,k,\D}_{t_j}) \big\rangle  + \big| g(V^{\xi,k,\D}_{t_j})\big|^2 \right) \\
 \le & \|\xi\|_r^2  +  \gamma^{-1} \left(K_1 + 2 L^2 \Delta^{1-2\theta} \right) e^{\gamma t_{n}} - (\alpha_1 - \gamma) \Delta \sum_{j=0}^{n-1}  e^{\gamma t_{j+1}} \EE |V^{\xi,k,\Delta}(t_j)|^2  \\
 &  + \EE \left(  I^{\xi, k,\Delta}_1 + I^{\xi, k,\Delta}_2 + I^{\xi, k,\Delta}_3  \right),
\end{aligned}
\end{equation}
where 
$$
I^{\xi,k,\Delta}_1 = 2 L^2 \Delta^{2-2\theta} \sum_{j=0}^{n-1} e^{\gamma t_{j+1}} \int_{-\infty}^0 |V^{\xi,k,\Delta}_{t_j}(u)|^2 \mu_2(\d u),
$$
$$
I^{\xi,k,\Delta}_2 =  \alpha_2 \Delta \sum_{j=0}^{n-1} e^{\gamma t_{j+1}}\int_{-\infty}^0 |V^{\xi,k,\Delta}_{t_j}(u)|^2 \nu_1(\d u)
$$
and
$$
I^{\xi,k,\Delta}_3 = \alpha_3 \Delta \sum_{j=0}^{n-1} e^{\gamma t_{j+1}} \int_{-\infty}^0 |V^{\xi,k,\Delta}_{t_j}(u)|^2 \nu_2(\d u).
$$
By a similar argument to that in \cite[Theorem 3.2]{LLM}, one has
\begin{equation}\label{5mu1}
    I^{\xi,k,\Delta}_1 \le  \frac{2 L^2  e^{4r} \mu_2^{(2r)} \Delta^{1-2\theta} \|\xi\|_r^2 }{2r-\gamma}  +  2 L^2 e^{\gamma \Delta} \mu_2^{(\gamma \Delta)} \Delta^{2-2\theta}  \sum_{j=0}^{n-1} e^{\gamma t_{j+1}} |V^{\xi,k,\Delta}(t_j)|^2,
\end{equation}
\begin{equation}\label{5nu1}
        I^{\xi,k,\Delta}_2 \le  \frac{\alpha_2  e^{4r} \nu_1^{(2r)}\|\xi\|_r^2 }{2r-\gamma}  +  \alpha_2  e^{\gamma \Delta} \nu_1^{(\gamma \Delta)}  \Delta  \sum_{j=0}^{n-1} e^{\gamma t_{j+1}} |V^{\xi,k,\Delta}(t_j)|^2,
\end{equation}
and
\begin{equation}\label{5nu2}
       I^{\xi,k,\Delta}_3 \le  \frac{\alpha_3  e^{4r} \nu_2^{(2r)} \|\xi\|_r^2 }{2r-\gamma}  +  \alpha_3  e^{\gamma \Delta} \nu_2^{(\gamma \Delta)}  \Delta  \sum_{j=0}^{n-1} e^{\gamma t_{j+1}} |V^{\xi,k,\Delta}(t_j)|^2.
\end{equation}
Substituting \eqref{5mu1}-\eqref{5nu2} into \eqref{5bou-1'} and using $\theta \in (0,1/2)$ yields
\begin{equation}\label{eX1}
  e^{\gamma t_n} \EE |V^{\xi,k,\D}(t_n)|^2 \le \tilde K_{1} \|\xi\|_r^2 +  \gamma^{-1}  (K_1 + 2 L^2 ) e^{\gamma t_{n}} - H_{\gamma,\Delta} \Delta \sum_{j=0}^{n-1} e^{\gamma t_{j+1}} |V^{\xi,k,\Delta}(t_j)|^2,
\end{equation}
where
$$
\tilde K_{1} = 1 + \frac{e^{4r}}{2r-\gamma} \left( 2 L^2 \mu_2^{(2r)} + \alpha_2 \nu_1^{(2r)} + \alpha_3 \nu_2^{(2r)} \right)
$$
and
$$
H_{\gamma,\D} = \alpha_1 - \gamma - e^{\gamma \Delta} \left(  \nu_1^{(\gamma\Delta)} \alpha_2 + \nu_2^{(\gamma \Delta)} \alpha_3 + 2 L^2\mu_2^{(\gamma\Delta)} \Delta^{1-2\theta} \right).
$$
By the choice of $\Delta_1$ and $\gamma_0$, we have $H_{\gamma, \Delta} > 0$ for any $\Delta \in (0, \Delta_1]$ and $\gamma \in (0, (2r)\wedge\gamma_0)$. Thus, 
\begin{equation}\label{5bou-1}
\EE|V^{\xi,k,\D}(t_n)|^2 \le \tilde K_{2} \left( 1 + \|\xi\|_r^2 e^{-\gamma t_n}\right),
\end{equation}
where $\tilde K_{2} = \tilde K_{1} \vee \left( \gamma^{-1}  (K_1 + 2 L^2) \right)$.

\textbf{Estimate $\EE \| X^{\xi,k,\D}_t\|_r^2$.} It follows from \eqref{6-LI} that, for any $n \ge 1$,
\begin{equation*}
\begin{aligned}
      \| V^{\xi,k,\D}_{t_n}\|_r^2 & = \sup_{-k \le u \le 0} \left( e^{2ru}| V^{\xi,k,\D}_{t_n}(u)|^2 \right) = \sup_{-kl \le j \le -1} \sup_{t_j  <  u \le t_{j+1}} \left( e^{2ru} |V^{\xi,k,\D}_{t_n}(u)|^2 \right)  \\
   & \le \sup_{-kl \le j \le -1} \sup_{t_j < u \le t_{j+1}} \left[ e^{2ru} \left( \frac{t_{j+1}-u}{\Delta} |V^{\xi,k,\D}_{t_n}(t_j)|^2 + \frac{u-t_j}{\Delta} |V^{\xi,k,\D}_{t_n}(t_{j+1})|^2 \right) \right] \\
& \le \sup_{-kl \le j \le -1} e^{2r\D} \left[ \left( e^{2rt_j}|V^{\xi,k,\D}_{t_n}(t_j)|^2\Big) \ve \Big( e^{2rt_{j+1}}|V^{\xi,k,\D}_{t_n}(t_{j+1})|^2  \right) \right],
\end{aligned}
\end{equation*}
which implies that for any $\gamma \in (0, (2r) \wedge \gamma_0)$,
\begin{equation}\label{5bou-2'}
\begin{aligned}
  \EE \| V^{\xi,k,\D}_{t_N}\|_r^2 \le & e^{2r\D} \EE \left[ \sup_{-kl \le n \le 0} \left(  e^{2rt_n} |V^{\xi,k,\D}_{t_N}(t_n)|^2 \right) \right] \\
       \le & e^{2r\D} \EE \left[  \sup_{-kl + N \le n \le N} \left(  e^{2rt_{n-N}} |V^{\xi,k,\D}(t_n)|^2 \right)  \right] \\
             \le & e^{2r\D} e^{-\gamma t_N} \left[ \|\xi\|_r^2 + \EE \left( \sup_{0 \le n \le N} \left( e^{\gamma t_n} |V^{\xi,k,\D}(t_n)|^2 \right) \right) \right].
\end{aligned}
\end{equation}
We now estimate $\EE \left(\sup_{0 \le n \le N}  \left( e^{\gamma t_n} |V^{\xi,k,\D}(t_n)|^2 \right) \right)$. Using \eqref{eX}, \eqref{6-f-lin}, \eqref{mon-f}, Remark \ref{rmk4.4}, 
and an argument similar to that in \eqref{eX1}, we arrive at
\begin{equation}\label{eX2}
\begin{aligned}
      e^{\gamma t_n} |V^{\xi,k,\Delta}(t_n)|^2  & \le \tilde K_{3} \|\xi\|_r^2 +  \gamma^{-1} \left( \rho^{-1}|f(0)|^2 + 2L^2 \right) e^{\gamma t_{n}} - \tilde H_{\gamma, \Delta} \Delta \sum_{j=0}^{n-1} e^{\gamma t_{j+1}} 
      |V^{\xi,k,\Delta}(t_j)|^2 \\
      & + \sum_{j=0}^{n-1} e^{\gamma t_{j+1}} \left( |g(V^{\xi,k,\Delta}_{t_j}) \Delta B_j|^2 + R^{\xi,k,\Delta}_{1,j} + R^{\xi,k,\Delta}_{2,j}\right),
\end{aligned}
\end{equation}
where 
$$
\tilde K_{3} = 1 + \frac{e^{4r}}{2r-\gamma} \left( 2L^2 \mu_2^{(2r)} + 2b_2 \nu_1^{(2r)} \right)
$$
and
$$
\tilde H_{\gamma, \Delta} = 2b_1 - \rho - \gamma - 2b_2 e^{\gamma \Delta} \nu_1^{(\gamma\Delta)}  - 2L^2 e^{\gamma \Delta} \mu_2^{(\gamma\Delta)}\Delta^{1-2\theta}.
$$
By $2b_1 > 2b_2 + b_3$, 
we may choose a sufficiently small $\rho$ such that, for any $\gamma \in (0, (2r) \wedge \gamma_0)$ and $\Delta \in (0, \Delta_1]$,
$$
\tilde H_{\gamma,\Delta} \ge H_{\gamma,\Delta} > 0.
$$
From the definition of $K_1$, it follows that $\rho^{-1} |f(0)|^2  \le  K_1$.
Consequently, one can then derive from \eqref{eX2} that for any $\gamma \in (0,\gamma_0)$ and $\Delta \in (0, \Delta_1]$,
\begin{equation}\label{eX3}
\begin{aligned}
      e^{\gamma t_n} |V^{\xi,k,\Delta}(t_n)|^2  & \le \tilde K_{3} \|\xi\|_r^2 +  \gamma^{-1} \left( K_1 + 2L^2\right) e^{\gamma t_{n}}  \\
      & + \sum_{j=0}^{n-1} e^{\gamma t_{j+1}} \left( |g(X^{\xi,k,\Delta}_{t_j}) \Delta B_j|^2 + R^{\xi,k,\Delta}_{1,j} + R^{\xi,k,\Delta}_{2,j}\right),
\end{aligned}
\end{equation}
which implies that
\begin{equation}\label{E-sup-eX1}
\begin{aligned}
  \EE \left(\sup_{0 \le n \le N} \left( e^{\gamma t_n} |V^{\xi,k,\D}(t_n)|^2 \right) \right)  & \le \tilde K_{3} \|\xi\|_r^2 +  \gamma^{-1} \left( K_1 + 2 L^2 \right) e^{\gamma t_{N}} + \Delta \sum_{n=0}^{N-1} e^{\gamma t_{n+1}} \EE |g(V^{\xi,k,\Delta}_{t_n})|^2 \\
  & + \EE \left[ \sup_{0 \le n \le N}  \sum_{j=0}^{n-1} e^{\gamma t_{j+1}} \left( R^{\xi,k,\Delta}_{1,j} + R^{\xi,k,\Delta}_{2,j}\right)  \right].
\end{aligned}
\end{equation}
Similar to the estimation of \eqref{3-Mk}, we can derive
\begin{equation}\label{E-e-R1}
\begin{aligned}
   & \EE \left( \sup_{0\le n \le N} \sum_{j=0}^{n-1} e^{\gamma t_{j+1}} R^{\xi,k,\D}_{1,j} \right) \\
\le & \frac{1}{2} \EE \left( \sup_{0\le n \le N} \left( e^{\gamma t_n} |V^{\xi,k,\Delta}(t_n)|^2 \right) \right)
+  32 e^{2\gamma} \Delta \sum_{j=0}^{N-1} e^{\gamma t_{j+1}} \EE |g(V^{\xi,k,\Delta}_{t_j})|^2.
\end{aligned}
\end{equation}
Using the Burkholder-Davis-Gundy inequality and the Young inequality gives
\begin{equation}\label{E-e-R2}
\begin{aligned}
       &  \EE \left( \sup_{0\le n \le N} \sum_{j=0}^{n-1} e^{\gamma t_{j+1}} R^{\xi,k,\D}_{2,j} \right)  \\
 \le & 8 \EE \left( \sum_{j=0}^{N-1} e^{2 \gamma t_{j+1}} |f(V^{\xi,k,\D}_{t_j}) \Delta|^2  |g(V^{\xi,k,\D}_{t_j})|^2 \Delta \right)^{\frac{1}{2}} \\ 
\le & 8 \EE \left[ \left( \sup_{0 \le j \le N-1} \left( e^{\gamma t_{j+1}} |f(V^{\xi,k,\D}_{t_j}) \Delta|^2 \right) \right)^{\frac{1}{2}}  \left( \Delta \sum_{j=0}^{N-1} e^{\gamma t_{j+1}} |g(V^{\xi,k,\D}_{t_j})|^2 \right)^{\frac{1}{2}}   \right] \\
\le & 4\Delta^2  \sum_{j=0}^{N-1} e^{\gamma t_{j+1}} \EE |f(V^{\xi,k,\D}_{t_j})|^2 + 4 \Delta \sum_{j=0}^{N-1} e^{\gamma t_{j+1}} \EE |g(V^{\xi,k,\D}_{t_j})|^2
\end{aligned}
\end{equation}
Inserting \eqref{E-e-R1} and \eqref{E-e-R2} into \eqref{E-sup-eX1}, one arrives at
\begin{equation}\label{E-sup-eX2}
\begin{aligned}
  \EE \left(\sup_{0 \le n \le N} \left(e^{\gamma t_n} |V^{\xi,k,\D}(t_n)|^2 \right) \right)  & \le 2 \tilde K_{3} \|\xi\|_r^2 + 2 \gamma^{-1} \left( K_1 + 2 L^2 \right) e^{\gamma t_{N}} \\
  &+ 8 \Delta^2 \sum_{j=0}^{N-1} e^{\gamma t_{j+1}} \EE |f(V^{\xi,k,\Delta}_{t_j})|^2 \\
  & + \left( 64 e^{2 \gamma} + 8 \right)  \Delta \sum_{j=0}^{N-1} e^{\gamma t_{j+1}} \EE |g(V^{\xi,k,\Delta}_{t_j})|^2 
\end{aligned}
\end{equation}
It is easy to see from \eqref{6-f-lin}, \eqref{5mu1}, \eqref{5bou-1} and  $\theta \in (0,1/2)$ that
\begin{equation}\label{E-e-f}
\begin{aligned}
        & \Delta^2 \sum_{j=0}^{N-1} e^{\gamma t_{j+1}} \EE |f(V^{\xi,k,\Delta}_{t_j})|^2 \\
 \le & 2 L^2 \Delta^{2-2\theta} \sum_{j=0}^{N-1} e^{\gamma t_{j+1}} \left( 1 + \EE \int_{-\infty}^0 |V^{\xi,k,\Delta}_{t_j}(u)|^2 \mu_1(\d u) \right)  \\
 \le & \frac{2 L^2 e^{\gamma t_N}}{\gamma}  +  \frac{2L^2 e^{4r}  \mu_2^{(2r)} \|\xi\|_r^2 }{2r-\gamma} +  2L^2 e^{\gamma\Delta} \mu_2^{(\gamma \Delta)} \Delta  \sum_{j=0}^{N-1} e^{\gamma t_{j+1}}  \EE |V^{\xi,k,\Delta}(t_j)|^2  \\
 \le &  \frac{2L^2}{\gamma} \left( 1 + \tilde K_2^2 e^{\gamma} \mu_2^{(\gamma)} \right) e^{\gamma t_N} + 2 L^2 \tilde K_{2} e^{2 \gamma}  \mu_2^{(\gamma)}  t_N  +  \frac{2L^2 e^{4r} \mu_2^{(2r)} \|\xi\|_r^2 }{2r-\gamma}.
\end{aligned}
\end{equation}
Similarly, by \eqref{mon-g},
\begin{equation}\label{E-e-g}
      \Delta \sum_{j=0}^{N-1} e^{\gamma t_{j+1}} \EE |g(V^{\xi,k,\Delta}_{t_n})|^2 \le \tilde K_{4} e^{\gamma t_N} + \tilde K_{5} \|\xi\|_r^2 t_N + \tilde K_6 \|\xi\|_r^2,
\end{equation}
where
$$
\tilde K_{4} = \gamma^{-1} \left(  \left( 1+\rho^{-1} \right) |g(0)|^2  +  \left( 1+\rho \right) b_3 \tilde K_2 e^{2\gamma} \nu_2^{(\gamma)} \right)
$$
and 
$$
\tilde K_{5} = (1+\rho) b_3 \tilde K_{2} e^{2 \gamma}  \nu_2^{(\gamma)}, \quad \tilde K_6 = \frac{(1+\rho)b_3 e^{4r} \nu_2^{(2r)}}{2r-\gamma}.
$$
Substituting \eqref{E-e-f} and \eqref{E-e-g} into \eqref{E-sup-eX2} yields
\begin{equation}
     \EE \left( \sup_{0 \le n \le N} e^{\gamma t_n} |V^{\xi,k,\Delta}(t_n)|^2 \right)  \le \tilde K_{7} e^{\gamma t_N}  +  \tilde K_{8} \|\xi\|_r^2 t_N  + \tilde K_{9} \|\xi\|_r^2,
\end{equation}
where
$$
\tilde K_{7} = \tilde K_{4} \left( 64e^{2\gamma} +  8 \right) + \frac{16L^2}{\gamma} \left( 1 +  \tilde K_{2} e^{\gamma} \mu_1^{(\gamma)} \right)  +  \frac{2 }{\gamma} \left( K_1 + 2L^2 \right),
$$
$$
\tilde K_{8} = \tilde K_{5} \left( 64e^{2\gamma} + 8 \right)  +  16 L^2 \tilde K_{2}  e^{2\gamma} \mu_1^{(\gamma)} 
$$
and
$$
\tilde K_{9} = 2 \tilde K_{3} +  \tilde K_{6} \left( 64e^{2\gamma} + 8 \right) + \frac{16L^2 e^{4r} \mu_2^{(2r)}}{2r-\gamma}  +  2 \tilde K_{3}.
$$
Inserting the above inequality into \eqref{5bou-2'} yields that for any $\varepsilon \in (0, \gamma)$,
$$
\begin{aligned}
 \EE \|V^{\xi,k,\Delta}_{t_N}\|_r^2 & \le C \left( 1 + \|\xi\|_r^2 e^{-\gamma t_N} + \|\xi\|_r^2 e^{-\gamma t_N} t_N \right) \\
 & \le C\left( 1 + \|\xi\|_r^2 e^{-(\gamma-\varepsilon)t_N} \right).
\end{aligned}
$$
The proof is complete.
\end{proof}
$\hfill\square$

\begin{lemma}\label{l-att}
Let {\rm Assumptions} \ref{a3.1}, \ref{dis} and \ref{g} hold with $\mu_2 \in \CP_{2r}$ and $2b_1 > 2b_2 + b_3$. Then for any $\lambda \in (0, (2r)\wedge \lambda_0)$ and $\xi, \eta \in \CC_r$,
$$
\sup_{k \ge 1} \sup_{\Delta \in (0,\Delta_2]} \sup_{t \ge 0} \EE |V^{\xi,k,\Delta}(t)-V^{\eta,k,\Delta}(t)| \le C \|\xi-\eta\|_r^2 e^{-\lambda t_n},
$$
where
$$
\Delta_2 = \left( \frac{2b_1-2b_2-b_3}{2L^2} \right)^{\frac{1}{1-2\theta}},
$$
$\lambda_0$ is the unique root of the following equation
$$
H_{\bar\lambda,\Delta_2} := 2b_1 - \bar\lambda - e^{\bar\lambda \Delta_2} \left(  2b_2 \nu_1^{(\bar\lambda\Delta_2)} + b_3 \nu_2^{(\bar\lambda\Delta_2)} + 2 L^2 \mu_2^{(\bar\lambda\Delta_2)} \D_2^{1-2\theta} \right) = 0.$$
Moreover, for any $\varepsilon \in (0, \lambda)$.
\begin{equation}\label{5att-2}
\sup_{k\ge 1} \sup_{\D\in (0, \D_2]} \sup_{n \ge 1} \EE\| V^{\xi,k,\D}_{t_n} -  V^{\eta,k,\D}_{t_n}\|_r^2 \le C\|\xi - \eta\|_r^2 e^{-(\lambda-\varepsilon) t_n}.
\end{equation}
\end{lemma}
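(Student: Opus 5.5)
The plan is to mirror the proof of Lemma \ref{l-bou}, applied to the difference of two numerical solutions driven by the same Brownian increments. (The statement presumably intends $\EE|V^{\xi,k,\Delta}(t)-V^{\eta,k,\Delta}(t)|^2$ with $t=t_n$.) Set $D(t_j)=V^{\xi,k,\Delta}(t_j)-V^{\eta,k,\Delta}(t_j)$ and $D_{t_j}=V^{\xi,k,\Delta}_{t_j}-V^{\eta,k,\Delta}_{t_j}$. The first observation is that $\bar\Pi^\Delta$ is the radial projection onto a closed ball, hence $1$-Lipschitz. Therefore
$$|D(t_{j+1})|^2\le |W^{\xi,k,\Delta}(t_{j+1})-W^{\eta,k,\Delta}(t_{j+1})|^2 .$$
Expanding the right-hand side gives
$$|D(t_j)|^2+2\Delta\langle D(t_j),\delta f_j\rangle+\Delta^2|\delta f_j|^2+|\delta g_j\Delta B_j|^2+\text{(martingale cross terms)},$$
where $\delta f_j=f(V^{\xi,k,\Delta}_{t_j})-f(V^{\eta,k,\Delta}_{t_j})$ and $\delta g_j$ is defined analogously. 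Taking expectations removes the cross terms linear in $\Delta B_j$, since $V_{t_j}$ is $\mathcal F_{t_j}$-measurable. Next I multiply by $e^{\lambda t_{j+1}}$, use $e^{\lambda t_{j+1}}-e^{\lambda t_j}\le \lambda\Delta e^{\lambda t_{j+1}}$, and sum from $j=0$ to $n-1$, exactly as in \eqref{eX}.

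Each term is then estimated using earlier results. Assumption \ref{dis} bounds the drift term by $-2b_1|D(t_j)|^2+2b_2\int|D_{t_j}(u)|^2\nu_1(\d u)$, and Assumption \ref{g} bounds $\EE|\delta g_j\Delta B_j|^2=\Delta\EE|\delta g_j|^2$ by $b_3\Delta\int|D_{t_j}(u)|^2\nu_2(\d u)$. This is where the truncated Lipschitz property \eqref{6-f-lip} is crucial. Combined with Jensen, it gives
$$\Delta^2|\delta f_j|^2\le L^2\Delta^{2-2\theta}\int|D_{t_j}(u)|^2\mu_2(\d u),$$
a linear bound with a small factor $\Delta^{1-2\theta}$. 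This is exactly why $\theta<1/2$ and the threshold $\Delta_2$ appear. The integral terms against $\nu_1,\nu_2,\mu_2$ are handled as in \eqref{5mu1}--\eqref{5nu2} (following \cite[Theorem 3.2]{LLM}). Because $D_{t_j}$ is a piecewise linear interpolation of the nodes, with the constant extension for $u<-k$, each weighted sum splits into an initial-data part, bounded by $C\|\xi-\eta\|_r^2/(2r-\lambda)$ after using $\mu^{(2r)}<\infty$, plus
$$e^{\lambda\Delta}\nu^{(\lambda\Delta)}\,\Delta\sum_j e^{\lambda t_{j+1}}|D(t_j)|^2 .$$
For the initial-data part I must check that $\bar\Pi^\Delta$ applied to the initial nodes still gives $|D(t_j)|\le|\xi(t_j)-\eta(t_j)|$ for $j\le0$, which again follows from the $1$-Lipschitz property. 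Collecting terms yields
$$e^{\lambda t_n}\EE|D(t_n)|^2\le C\|\xi-\eta\|_r^2-H_{\lambda,\Delta}\,\Delta\sum_{j=0}^{n-1}e^{\lambda t_{j+1}}\EE|D(t_j)|^2 .$$
Here $H_{\lambda,\Delta}$ is as in the statement, with $\Delta$ in place of $\Delta_2$. The quantity $H_{\lambda,\Delta}$ is monotone in $\Delta$, so $H_{\lambda,\Delta}\ge H_{\lambda,\Delta_2}>0$ for $\Delta\le\Delta_2$ and $\lambda<\lambda_0$; this uses $2b_1>2b_2+b_3$ and the definition of $\Delta_2$, which make $H_{0,\Delta_2}>0$. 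Dropping the negative sum gives the first claim with a constant independent of $k$ and $\Delta$.

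For the segment bound \eqref{5att-2} I argue as in \eqref{5bou-2'}. First,
$$\|D_{t_N}\|_r^2\le e^{2r\Delta}e^{-\lambda t_N}\Big(\|\xi-\eta\|_r^2+\sup_{0\le n\le N}e^{\lambda t_n}|D(t_n)|^2\Big),$$
using $\lambda<2r$. So I need $\EE\sup_n$ rather than $\sup_n\EE$, and I expect this to be the main obstacle. I will keep the martingale terms $\sum_j e^{\lambda t_{j+1}}\langle D(t_j),\delta g_j\Delta B_j\rangle$ and $\sum_j e^{\lambda t_{j+1}}\langle\Delta\,\delta f_j,\delta g_j\Delta B_j\rangle$ in the pathwise inequality. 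These are treated by Burkholder--Davis--Gundy and Young exactly as in \eqref{E-e-R1}--\eqref{E-e-R2}, absorbing half of $\EE\sup$ into the left-hand side. The remaining sums $\Delta\sum_j e^{\lambda t_{j+1}}\EE|D(t_j)|^2$ are bounded by the first part by $C\|\xi-\eta\|_r^2(1+t_N)$. This gives
$$\EE\sup_{n\le N}e^{\lambda t_n}|D(t_n)|^2\le C\|\xi-\eta\|_r^2(1+t_N),$$
hence $\EE\|D_{t_N}\|_r^2\le C\|\xi-\eta\|_r^2(1+t_N)e^{-\lambda t_N}\le C\|\xi-\eta\|_r^2e^{-(\lambda-\varepsilon)t_N}$. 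The point needing the most care is that none of these constants depend on $k$, which holds because all the delay-measure estimates use only $\mu^{(2r)}$ and $\nu^{(\lambda\Delta)}$, and none of them use $k$.
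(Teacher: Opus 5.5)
Your argument follows the paper's proof essentially step for step: the $1$-Lipschitz projection to pass from $V$ to $W$, exponential weighting and summation as in \eqref{eX}, the bounds from Assumptions \ref{dis}, \ref{g} and \eqref{6-f-lip}, the shift argument of \eqref{5mu1}--\eqref{5nu2}, and then the $\EE\sup$/BDG step of Lemma \ref{l-bou} for the segment norm. You also supply details the paper's sketch omits, such as $|D(t_j)|\le|\xi(t_j)-\eta(t_j)|$ for $-kl\le j\le 0$. However, two constant-level slips need repair. Both are inherited from the statement and also appear in the paper's sketch, and as written they make the result vacuous or unjustified.

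First, your claim that $2b_1>2b_2+b_3$ and the choice of $\Delta_2$ give $H_{0,\Delta_2}>0$ is false for the $H$ ``as in the statement''. By definition $2L^2\Delta_2^{1-2\theta}=2b_1-2b_2-b_3$, so $H_{0,\Delta_2}=0$ exactly. Hence the root $\lambda_0$ is $0$ and the admissible range of $\lambda$ is empty. Your own Jensen step only produces the coefficient $L^2$, because for differences there is no ``$1+$'' term, unlike in Lemma \ref{l-bou}. With $L^2$ in $H$ one gets $H_{0,\Delta_2}=\tfrac12(2b_1-2b_2-b_3)>0$. So define $H$ and $\lambda_0$ with $L^2$, or shrink $\Delta_2$.

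Second, the shift argument does not give the factor $e^{\lambda\Delta}\nu^{(\lambda\Delta)}$. Re-indexing $\sum_j e^{\lambda t_{j+1}}|D(t_{j+m})|^2$ costs $e^{-\lambda t_m}\le e^{\lambda\Delta}e^{-\lambda u}$ for $u\in[t_m,t_{m+1}]$, so the correct factor is $e^{\lambda\Delta}\nu^{(\lambda)}$. For example, with $\nu=\delta_{-1}$ the factor is exactly $e^{\lambda}$, not $e^{\lambda\Delta}$. This is consistent with the continuous-time Lemma \ref{l4.5}. Hence $\lambda_0$ must be defined with $\nu_1^{(\bar\lambda)},\nu_2^{(\bar\lambda)},\mu_2^{(\bar\lambda)}$. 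These are finite for $\bar\lambda<2r$ and keep $H$ decreasing in both $\lambda$ and $\Delta$, so your monotonicity argument goes through unchanged.

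With these two corrections the proof is complete, with constants independent of $k$ and $\Delta$.
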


\begin{proof}
Since the proof of this theorem is similar to that of the previous theorem, we only provide a sketch.
	Fix $\Delta \in (0, \Delta_2)$.
For any $\xi, \eta \in \CC_r$, we define $e(t_j) = V^{\xi,k,\Delta}(t_j) - V^{\eta,k,\Delta}(t_j)$ and $e_{t_j}(\cdot) = V^{\xi,k,\Delta}_{t_j}(\cdot) - V^{\eta,k,\Delta}_{t_j}(\cdot)$ for simplicity.
It follows from \eqref{TEM} that
\begin{equation*}
\begin{aligned}
   & |e(t_{j+1})|^2 \le  |W^{\xi,k,\D}(t_{j+1}) - W^{\eta,k,\D}(t_{j+1})|^2  \\
  = & |e(t_j)|^2  +  |f(V^{\xi,k,\D}_{t_j})-f(V^{\eta,k,\D}_{t_j})|^2 \D^2 + |g(V^{\xi,k,\D}_{t_j})-g(V^{\eta,k,\D}_{t_j})|^2 |\D B_j|^2\\
   + & 2 \big\langle e(t_j),  f(V^{\xi,k,\D}_{t_j})-f(V^{\eta,k,\D}_{t_j})\big\rangle \D  +  \mathcal{R}_{1,j} +  \mathcal{R}_{2,j},
\end{aligned}
\end{equation*}
where
$$
 \mathcal{R}_{1,j} :=   2 \big\langle e(t_j),  \big( g(V^{\xi,k,\D}_{t_j})-g(V^{\eta,k,\D}_{t_j}) \big) \D B_j \big\rangle,
$$
$$
  \mathcal{R}_{2,j} :=  2  \big\langle f(V^{\xi,k,\D}_{t_j})-f(V^{\eta,k,\D}_{t_j}), \big( g(V^{\xi,k,\D}_{t_j})-g(V^{\eta,k,\D}_{t_j}) \big) \D B_j \big\rangle \D.
$$
For any $\lambda \in (0, (2r)\wedge\lambda_0)$, using equation \eqref{6-f-lip} and arguments similar to those in the proof of
\eqref{eX1}, we obtain
\begin{equation}\label{5att-1'}
   e^{\lambda t_n} \EE |e(t_n)|^2
\le  \bar K_{1} \|\xi-\eta\|_r^2  -   H_{\lambda,\D} \D \sum_{j=0}^{n-1}  e^{\lambda t_{j+1}} \EE |e(t_j)|^2,
\end{equation}
where
$$
\bar K_{1} = 1 + \frac{e^{4r}}{2r-\lambda} \left( 2L^2 \mu_2^{(2r)} + 2b_2 \nu_1^{(2r)} + b_3 \nu_2^{(2r)} \right),
$$
$$
  H_{\lambda,\D} = 2b_1 - \lambda - e^{\lambda \D} \left(  2b_2 \nu_1^{(\lambda\Delta)} + b_3 \nu_2^{(\lambda\Delta)} + 2L^2 \mu_2^{(\lambda\Delta)} \D^{1-2\theta} \right).
$$
By the definitions of $\lambda_0$ and $\Delta_2$, we have  $H_{\lambda,\D} \ge 0$ for any $\lambda \in (0,\lambda_0]$ and $\Delta \in (0, \Delta_2]$. It then follows from \eqref{5att-1'} that for any $\lambda \in (0,\lambda_0]$ and $\Delta \in (0, \Delta_2]$, 
\begin{equation}\label{5att-1}
   \EE |e(t_n)|^2
\le  \bar K_1 \|\xi-\eta\|_r^2  e^{-\lambda t_n}.
\end{equation}
In the same manner as in Lemma \ref{l-bou}, one can verify that \eqref{5att-2} holds for any $\lambda \in (0,\lambda_0]$.
\end{proof}
$\hfill\square$

By the preceding two lemmas, we hereby present the existence and uniqueness of the numerical IPM for the truncated EM numerical segment process, as well as its convergence to the exact one in $\mathbb{W}_2$. For convenience, let $P^{k,\Delta}_{t_n}(\xi, \cdot)$ denote the transition probability kernel of $V^{\xi,k,\Delta}_{t_n}$ and the Markov semigroup operators associated with $V^{\xi,k,\Delta}_{t_n}$ is defined by
$$
P^{k,\Delta}_{t_n} h(\xi) = \EE h(V^{\xi,k,\Delta}_{t_n}) = \int_{\CC_r} h(\phi) P^{k,\Delta}_{t_n}(\xi, \d \phi), \quad n \ge 0
$$
for $h \in \BB_b(\CC_r)$ and $\xi \in \CC_r$. Moreover, for any $A \in \BB(\CC_r)$, $\mu \in \CP(\CC_r)$, define
$$
(\mu P^{k,\Delta}_{t_n})(A) := \int_{\CC_r} P^{k,\Delta}_{t_n}(\xi, A) \mu(\d \xi).
$$
Let $\delta_\xi$ be the Dirac measure at $\xi$ for any $\xi \in \CC_r$.

\begin{theorem}\label{th5.4}
  Assume Assumptions \ref{a3.1}, \ref{dis} and \ref{g} hold and $\mu_2 \in \CP_{2r}$, $2b_1 > 2b_2 + b_3$.
  Then $V^{\xi,k,\Delta}_{t_n}$ admits a unique IPM $\pi^{k,\Delta} \in \mathcal{P}_2(\mathcal{C}_r)$ satisfying
  $$
  \sup_{k \ge 1} \sup_{\Delta \in (0,\Delta_1\wedge\Delta_2]}\mathbb{W}_2(\mu P^{k,\Delta}_{t_n}, \pi^{k,\Delta}) \le C( 1+\mu(\|\cdot\|_r^2) ) e^{-\frac{\lambda-\varepsilon}{2}t_n}, \quad \forall\mu \in \CP_2(\CC_r)
  $$
for any $\lambda \in (0, (2r)\wedge\lambda_0)$ and any $\varepsilon \in (0, \lambda)$,
where $\Delta_1$ and $\Delta_2$ are defined in {\rm Lemmas}~\ref{l-bou} and~\ref{l-att}, respectively.
\end{theorem}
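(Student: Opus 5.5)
The proof will follow the template of Theorem \ref{th4.6}, now for the discrete-time Markov chain $\{V^{\xi,k,\Delta}_{t_n}\}_{n\ge0}$. Fix $k\ge1$ and $\Delta\in(0,\Delta_1\wedge\Delta_2]$. All constants must come only from Lemmas \ref{l-bou} and \ref{l-att}, which are uniform in $k$ and $\Delta$. Then the final estimate is automatically uniform, and the outer suprema in the statement are harmless.

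\textbf{Step 1: uniform bound on $\mathbb{W}_2(\mu,\mu P^{k,\Delta}_{t_n})$.} By \eqref{5bou-2}, $\mu P^{k,\Delta}_{t_n}\in\CP_2(\CC_r)$ for every $\mu\in\CP_2(\CC_r)$. Using the product coupling,
$$\mathbb{W}_2^2(\delta_\xi,\delta_\eta P^{k,\Delta}_{t_n})\le 2\|\xi\|_r^2+2\EE\|V^{\eta,k,\Delta}_{t_n}\|_r^2\le C(1+\|\xi\|_r^2+\|\eta\|_r^2).$$
Convexity of $\mathbb{W}_2^2$ under mixtures (gluing couplings) then gives $\sup_n\mathbb{W}_2^2(\mu,\mu P^{k,\Delta}_{t_n})\le C(1+\mu(\|\cdot\|_r^2))$.

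\textbf{Step 2: contraction.} By \eqref{5att-2}, $\mathbb{W}_2^2(\delta_\xi P^{k,\Delta}_{t_n},\delta_\eta P^{k,\Delta}_{t_n})\le C\|\xi-\eta\|_r^2e^{-(\lambda-\varepsilon)t_n}$, since the synchronous coupling uses the same Brownian increments. Take an optimal coupling of $\mu,\nu$ and glue it with these pointwise couplings. For this to be legitimate I need a measurable selection of couplings, or equivalently that the synchronous coupling $(V^{\xi},V^{\eta})$ is jointly measurable in $(\xi,\eta)$, which holds since the scheme is a measurable map of $(\xi,\eta,\Delta B_0,\dots)$. This yields
$$\mathbb{W}_2(\mu P^{k,\Delta}_{t_n},\nu P^{k,\Delta}_{t_n})\le C\,\mathbb{W}_2(\mu,\nu)e^{-\frac{\lambda-\varepsilon}{2}t_n}.$$

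\textbf{Step 3: existence.} By the Markov property \eqref{5.1}--\eqref{5.2}, $\mu P^{k,\Delta}_{t_{n+m}}=(\mu P^{k,\Delta}_{t_m})P^{k,\Delta}_{t_n}$. Combining Steps 1 and 2,
$$\mathbb{W}_2(\mu P^{k,\Delta}_{t_n},\mu P^{k,\Delta}_{t_{n+m}})\le C(1+\mu(\|\cdot\|_r^2))^{1/2}e^{-\frac{\lambda-\varepsilon}{2}t_n}.$$
So $\{\mu P^{k,\Delta}_{t_n}\}_n$ is Cauchy in the Polish space $(\CP_2(\CC_r),\mathbb{W}_2)$ and converges to some $\pi^{k,\Delta}$.

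To show $\pi^{k,\Delta}$ is invariant, I would avoid the Feller argument used in Theorem \ref{th4.6}. Instead, apply Step 2 with $n=1$: $P^{k,\Delta}_{t_1}$ is $\mathbb{W}_2$-Lipschitz on $\CP_2(\CC_r)$. Hence
$$\pi^{k,\Delta}P^{k,\Delta}_{t_1}=\lim_n \mu P^{k,\Delta}_{t_{n+1}}=\pi^{k,\Delta}.$$
This Lipschitz route bypasses the fact that the truncation $\bar\Pi^\Delta$ and the interpolation in \eqref{6-LI} must be checked continuous in $\xi$. I expect this invariance step, including the measurable-gluing issue in Step 2, to be the only delicate point; the rest is bookkeeping.

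\textbf{Step 4: uniqueness and rate.} If $\pi'$ is another invariant measure in $\CP_2$, Step 2 gives $\mathbb{W}_2(\pi,\pi')\le Ce^{-ct_n}\mathbb{W}_2(\pi,\pi')\to0$, so $\pi'=\pi^{k,\Delta}$. Finally,
$$\mathbb{W}_2(\mu P^{k,\Delta}_{t_n},\pi^{k,\Delta})=\mathbb{W}_2(\mu P^{k,\Delta}_{t_n},\pi^{k,\Delta}P^{k,\Delta}_{t_n})\le C\,\mathbb{W}_2(\mu,\pi^{k,\Delta})e^{-\frac{\lambda-\varepsilon}{2}t_n}.$$
Bound $\mathbb{W}_2(\mu,\pi^{k,\Delta})\le C(1+\mu(\|\cdot\|_r^2)+\pi^{k,\Delta}(\|\cdot\|_r^2))$. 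To remove the dependence on $\pi^{k,\Delta}$, note that \eqref{5bou-2} with $n\to\infty$ and lower semicontinuity give $\pi^{k,\Delta}(\|\cdot\|_r^2)\le\liminf_n\EE\|V^{0,k,\Delta}_{t_n}\|_r^2\le C$, uniformly in $k$ and $\Delta$. This yields the asserted bound $C(1+\mu(\|\cdot\|_r^2))e^{-\frac{\lambda-\varepsilon}{2}t_n}$.
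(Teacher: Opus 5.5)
Your proof is correct and follows the paper's structure. Lemma~\ref{l-bou} gives the uniform bound on $\mathbb{W}_2(\mu,\mu P^{k,\Delta}_{t_n})$, Lemma~\ref{l-att} gives the $\mathbb{W}_2$-contraction, and the Cauchy argument plus uniqueness and the rate via contraction complete it. There are three local deviations, all valid.

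First, you obtain invariance from the $\mathbb{W}_2$-Lipschitz property of $P^{k,\Delta}_{t_1}$ rather than from the Feller property. Both routes rest on Lemma~\ref{l-att}, so neither needs a separate continuity check of $\bar\Pi^\Delta$ or of the interpolation, contrary to what you suggest. Second, you bound $\pi^{k,\Delta}(\|\cdot\|_r^2)$ uniformly using the second moments of $\delta_{\mathbf 0}P^{k,\Delta}_{t_n}$, whereas the paper uses a truncation-at-$M$ invariance argument. Third, you glue couplings using convexity of $\mathbb{W}_2^2$, whereas the paper invokes convexity of $\mathbb{W}_2$, which is not true as stated; your version is the correct one.
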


\begin{proof}
  For any $\Delta \in (0, \Delta_1\wedge\Delta_2]$ and $n \ge 0$, it follows from Lemma \ref{l-bou} that $\{\mu P^{k,\Delta}_{t_n}\}_{n \ge 0} \subset \mathcal{P}_2(\mathcal{C}_r)$ for any $\mu \in \mathcal{P}_2(\mathcal{C}_r)$. Moreover, by the definition of the Wasserstein distance and Lemma \ref{l-bou}, 
\begin{equation*}
\begin{aligned}
\mathbb{W}_2^2(\delta_{\xi},\delta_{\eta}P^{k,\Delta}_{t_n})
    \le & \int_{\mathcal{C}_r} \int_{\mathcal{C}_r} \|\phi-\varphi\|_r^2 \delta_{\xi}(\mathrm{d} \phi) (\delta_{\eta}P^{k,\Delta}_{t_n})(\mathrm{d} \varphi) \\
    \le & 2 \int_{\mathcal{C}_r} \int_{\mathcal{C}_r} \big( \|\phi\|_r^2 + \|\varphi\|_r^2 \big)  \delta_{\xi}(\mathrm{d} \phi) (\delta_{\eta}P^{k,\Delta}_{t_n}) (\mathrm{d} \varphi) \\
    = & 2 \int_{\mathcal{C}_r} \big( \|\xi\|_r^2 + \|\varphi\|_r^2 \big)  (\delta_{\eta}P^{k,\Delta}_{t_n})(\mathrm{d} \varphi) \\
    \le & 2  \big( \|\xi\|_r^2 + \mathbb{E} \|X^{\eta,k,\Delta}_{t_n}\|_r^2 \big)  \\
    \le & C  \big(1 + \|\xi\|_r^2 + \|\eta\|_r^2 \big),  \\
\end{aligned}
\end{equation*}
Combining this with the convexity of the Wasserstein distance $\mathbb{W}_2(\cdot,\cdot)$ yields 
\begin{equation}\label{4-37}
  \mathbb{W}_2(\mu, \mu P^{k,\Delta}_{t_n}) \le C(1+\mu(\|\cdot\|_r^2)), \quad \forall \mu\in \mathcal{P}_2(\mathcal{C}_r) .
\end{equation}
Furthermore, by Lemma \ref{l-att}, 
\begin{equation*}
   \mathbb{W}_2(\delta_{\xi} P^{k,\Delta}_{t_n},
  \delta_{\eta}P^{k,\Delta}_{t_n}) \le \Big( \mathbb{E}\|V^{\xi,k,\Delta}_{t_n} - V^{\eta,k,\Delta}_{t_n}\|_r^2 \Big)^{\frac{1}{2}} \le C \|\xi-\eta\|_r e^{-\frac{\lambda-\varepsilon}{2}t_n}.
\end{equation*}
Using again the convexity of $\mathbb{W}_2(\cdot,\cdot)$, it follows that
\begin{equation}\label{W2-att}
       \mathbb{W}_2(\mu P^{k,\Delta}_{t_n}, \nu P^{k,\Delta}_{t_n}) 
\le  C  \mathbb{W}_2(\mu, \nu)  e^{-\frac{\lambda_\varepsilon}{2}t_n}, \quad \forall \mu, \nu \in \CP_{2}(\CC_r).
\end{equation}
Hence, for any positive integers $n, l$ and any $\mu \in \mathcal{P}_2(\mathcal{C}_r)$, it follows from \eqref{4-37} and \eqref{W2-att} that 
  $$
  \begin{aligned}
      & \mathbb{W}_2(\mu P^{k,\Delta}_{t_n}, \mu P^{k,\Delta}_{t_{n+l}} ) \\
      \le &  C \mathbb{W}_2(\mu, \mu P^{k,\Delta}_{t_l} ) e^{-\frac{\lambda-\varepsilon}{2}t_n} \\
      \le & C(1+\mu(\|\cdot\|_r^2))  e^{-\frac{\lambda-\varepsilon}{2}t_n} \rightarrow 0, \quad \hbox{as } n \rightarrow +\infty.
  \end{aligned}
  $$
  That is $\{\mu P^{k,\Delta}_{t_n}\}_{n \ge 0}$ is a Cauchy sequence under the Wasserstein distance $\mathbb{W}_2$. Since the metric space $(\mathcal{P}_2(\mathcal{C}_r),\mathbb{W}_2)$ is a Polish space, there exist a probability measure $\pi^{k,\Delta} \in \mathcal{P}_2(\mathcal{C}_r)$ such that
  $$
  \lim_{n \rightarrow \infty}\mathbb{W}_2(\mu P^{k,\Delta}_{t_n}, \pi^{k,\Delta}) =0.
  $$
  By Lemma \ref{l-att}, $P^{k,\Delta}_{t_n}$ holds the Feller property, then for any $h \in C_b(\mathcal{C}_r)$, $P^{k,\Delta}_{t_n}h \in C_b(\mathcal{C}_r)$. Then it follows from the Chapman-Kolmogorov equation of the transition probability that
  $$
  \begin{aligned}
      \pi^{k,\Delta} (P^{k,\Delta}_{t_n} h) 
      & = \int_{\mathcal{C}_r} (P^{k,\Delta}_{t_n} h(\phi)) \pi^{k,\Delta}(\mathrm{d} \phi) \\
      & = \lim_{l \rightarrow \infty} \int_{\mathcal{C}_r} (P^{k,\Delta}_{t_n} h(\phi)) P^{k,\Delta}_{t_l}(\varphi, \mathrm{d} \phi) \\
      & = \lim_{l \rightarrow \infty} \int_{\mathcal{C}_r}  \int_{\mathcal{C}_r}h(\eta)  P^{k,\Delta}_{t_n} (\phi, \mathrm{d} \eta) P^{k,\Delta}_{t_l}(\varphi, \mathrm{d} \phi) \\
      & = \lim_{l \rightarrow \infty}   \int_{\mathcal{C}_r}  h(\eta)  P^{k,\Delta}_{t_{n+l}} (\varphi, \mathrm{d} \eta)   = \pi^{k,\Delta}h.
  \end{aligned}
  $$
  That is, $\pi^{k,\Delta}$ is indeed an IPM. Similar to the proof of Theorem \ref{th4.6}, one can further show that the IPM $\pi^{k,\Delta}$ is unique and satisfies
\begin{equation}\label{4-39}
  \mathbb{W}_2(\mu P^{k,\Delta}_{t_n}, \pi^{k,\Delta}) \le C(1+\mu(\|\cdot\|_r^2) + \pi^{k,\Delta}(\|\cdot\|_r^2)) e^{-\frac{\lambda-{\varepsilon}}{2}t_n}.
\end{equation}
In addition, by Lemma \ref{l-bou} that for any $M > 0$ and $\varepsilon \in (0, \gamma)$,
$$
\begin{aligned}
\pi^{k,\Delta}(\|\cdot\|_r^2 \wedge M) & = \pi^{k,\Delta}\left(P^{k,\Delta}_t(\|\cdot\|_r^2 \wedge M)\right) \\
& = \int_{\CC_r} P^{k,\Delta}_t(\|\phi\|_r^2 \wedge M) \pi^{k,\Delta}(\d \phi) \\
& = \int_{\CC_r} \EE \left(\|V^{\phi,k,\Delta}_t\|_r^2 \wedge M \right) \pi^{k,\Delta}(\d \phi) \\
& \le \int_{\CC_r} \EE \|V^{\phi,k,\Delta}_t\|_r^2  \pi^{k,\Delta}(\d \phi) \\
& \le \int_{\CC_r} C(1 + \|\phi\|_r^2 e^{- (\gamma-\varepsilon) t})  \pi^{k,\Delta}(\d \phi) \\
& = C \left(1 + e^{-(\gamma-\varepsilon) t} \pi^{k,\Delta}(\|\cdot\|^2_r) \right),
\end{aligned}
$$
here $C$ is a positive constant independent of $k$, $t$ and $M$. Letting $t \to +\infty$ gives
$$
\pi^{k,\Delta}(\|\cdot\|_r^2 \wedge M) \le C.
$$
By the dominated convergence theorem and the fact that $C$ is independent of $k$ and $\Delta$, we obtain 
\begin{equation}\label{uniform_bounded}
\sup_{k \ge 1} \sup_{\Delta \in (0,\Delta_1\wedge\Delta_2]} \pi^{k,\Delta}(\|\cdot\|^2_r) \le C.
\end{equation}
Combining this with \eqref{4-39} implies
$$
\begin{aligned}
  & \sup_{k \ge 1} \sup_{\Delta \in (0,\Delta_1\wedge\Delta_2]} \mathbb{W}_2(\mu P^{k,\Delta}_{t_n}, \pi^{k,\Delta}) \\
\le & C \left(1+ \mu(\|\cdot\|_r^2) + \sup_{k \ge 1} \sup_{\Delta \in (0,\Delta_1\wedge\Delta_2)}\pi^{k,\Delta}(\|\cdot\|_r^2) \right) e^{- \frac{\lambda-\varepsilon}{2} t} \\ 
\le & C \left( 1+\mu(\|\cdot\|_r^2) \right) e^{-\frac{\lambda-{\varepsilon}}{2}t_n}.
\end{aligned}
$$
The proof is complete.
\end{proof}
$\hfill\square$ 

The primary result of this paper is presented in the following theorem.

\begin{theorem}\label{IPM-conv}
Assume Assumptions \ref{a3.1},  \ref{dis}, \ref{g} hold with $2b_1 > 2b_2 + b_3$, $\nu_2 \in \CP_{pr}$ for some $p > 2$. 
Then 
   $$
      \lim_{k \to \infty, \Delta \to 0}\mathbb{W}_2(\pi^{k,\Delta}, \pi) = 0.
   $$
\end{theorem}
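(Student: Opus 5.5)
The plan is the standard triangle-inequality argument of \cite{MST10, YZ24}. It combines three earlier results: the exponential ergodicity of the exact semigroup (Theorem \ref{th4.6} with $p=2$), the numerical ergodicity of $P^{k,\Delta}_{t_n}$ uniform in $(k,\Delta)$ (Theorem \ref{th5.4}), and the finite-time strong convergence of the numerical segment process (Theorem \ref{th3.4}, which holds for the scheme \eqref{6-TEM}).

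First I check that the hypotheses of Theorem \ref{th4.6} hold with $p=2$. Here $2b_1>2b_2+b_3$ is exactly the condition $2b_1>2b_2+(p-1)b_3$. Since $\nu_2\in\CP_{pr}\subset\CP_{2r}$ and $\nu_1\in\CP_{2r}$, we get a unique IPM $\pi\in\CP_2(\CC_r)$ and
\[
\WW_2(\delta_{\xi} P_t,\pi)\le C\big(1+\|\xi\|_r^2+\pi(\|\cdot\|_r^2)\big)e^{-\beta_\varepsilon t}.
\]
I then fix a convenient deterministic initial datum, say $\xi\equiv \mathbf{0}$ (or any uniformly continuous $\xi\in\CC_r$, as Theorem \ref{th3.4} requires). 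For $\Delta\in(0,\Delta_1\wedge\Delta_2]$, $k\ge1$ and $t=t_n$, the triangle inequality gives
\[
\WW_2(\pi^{k,\Delta},\pi)\le \WW_2(\pi^{k,\Delta},\delta_\xi P^{k,\Delta}_{t_n})+\WW_2(\delta_\xi P^{k,\Delta}_{t_n},\delta_\xi P_{t_n})+\WW_2(\delta_\xi P_{t_n},\pi).
\]
By Theorem \ref{th5.4}, the first term is at most $C(1+\|\xi\|_r^2)e^{-\frac{\lambda-\varepsilon}{2}t_n}$ with $C$ independent of $k,\Delta$. By Theorem \ref{th4.6}, the third term is at most $C(1+\|\xi\|_r^2+\pi(\|\cdot\|_r^2))e^{-\beta_\varepsilon t_n}$. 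Given $\epsilon>0$, I choose a time $T$ such that both terms are below $\epsilon/3$ for every $t_n\in[T,T+1]$. This choice is uniform in $k$ and $\Delta$, which is the reason the uniform bound \eqref{uniform_bounded} was needed in Theorem \ref{th5.4}.

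For the middle term, I couple the two processes through the same Brownian motion. Then
\[
\WW_2(\delta_\xi P^{k,\Delta}_{t_n},\delta_\xi P_{t_n})\le \big(\EE\|V^{\xi,k,\Delta}_{t_n}-x^\xi_{t_n}\|_r^2\big)^{1/2}\le \Big(\sup_{0\le t\le T+1}\EE\|V^{\xi,k,\Delta}_t-x^\xi_t\|_r^2\Big)^{1/2}.
\]
By Theorem \ref{th3.4} with $q=2$, this tends to $0$ as $\Delta\to0$, $k\to\infty$, so it is below $\epsilon/3$ once $\Delta$ is small and $k$ is large. Taking $t_n$ to be a grid point in $[T,T+1]$, which exists since $\Delta\le1$, completes the argument.

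The main obstacle is checking that Theorem \ref{th3.4} genuinely applies under the present hypotheses. It was stated under Assumptions \ref{a2.2}, \ref{a3.1}, \ref{a3.1'} with $\mu_2\in\CP_b$ for some $b>r$, and with the truncation $\Pi^\Delta$ in place of $\bar\Pi^\Delta$. I will argue as follows. Assumption \ref{g} implies the global Lipschitz bound $|g(\phi)-g(\varphi)|\le \sqrt{b_3\nu_2^{(2r)}}\,\|\phi-\varphi\|_r$. Assumption \ref{dis} together with \eqref{mon-f}, \eqref{mon-g} gives Assumption \ref{a2.2} with $a_2=0$. The integral-type Lipschitz bound on $g$ needed for the convergence $x^k\to x$ (Theorem \ref{th3.2}) comes from the $L^2(\nu_2)$ structure of Assumption \ref{g}, which I will use in place of Assumption \ref{a3.1'}. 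The integrability $\nu_2\in\CP_{pr}$ with $p>2$ supplies the exponential tail decay $e^{-(p-2)rk}$ needed to control the truncation error, exactly as in Theorem \ref{th3.6}. If the paper's remark that Theorem \ref{th3.4} holds for $V^{\xi,k,\Delta}$ is accepted as stated, this step is immediate. Otherwise I would redo the estimate for $\EE\|x^k_t-x_t\|_r^2$ via Itô's formula and Assumptions \ref{dis}, \ref{g}, following the proof of Theorem \ref{th3.6} with $a_4=0$.
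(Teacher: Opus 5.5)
Your proposal is correct and is essentially the paper's proof. It uses the same three-term triangle inequality through $\delta_{\mathbf 0}P^{k,\Delta}_{t_n}$ and $\delta_{\mathbf 0}P_{t_n}$: the outer terms are controlled uniformly in $(k,\Delta)$ by Theorems \ref{th5.4} and \ref{th4.6}, and the middle term by Theorem \ref{th3.4}, which the paper simply asserts carries over to the scheme \eqref{6-TEM}.

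Taking a grid point $t_n\in[T,T+1]$ and using the uniformity in $t$ of Theorem \ref{th3.4} is actually cleaner than the paper's ``fix $n$, then shrink $\Delta$'', where $t_n=n\Delta$ would collapse. In your fallback for Theorem \ref{th3.4}, note that the drift tail only involves $\nu_1\in\CP_{2r}$, so it vanishes by dominated convergence rather than at an exponential rate, which is still enough.
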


\begin{proof}
For any $n \ge 0$, using Theorems \ref{th4.6} and \ref{th5.4}, we obtain that for any $\varepsilon \in (0, \lambda\wedge\beta)$
\begin{equation}\label{W-k-kD}
\begin{aligned}
 \WW_2(\pi^{k,\Delta}, \pi) \le &  \WW_2(\pi^{k,\Delta},\delta_{\mathbf{0}} P^{k,\Delta}_{t_n}) +\WW_2(\delta_{\mathbf{0}} P^{k,\Delta}_{t_n}, \delta_{\mathbf{0}} P_{t_n}) + \WW_2(\delta_{\mathbf{0}} P_{t_n}, \pi)   \\
     \le &  C \left(1 + \pi(\|\cdot\|_r^2) \right) e^{-\left( \frac{\lambda-{\varepsilon}}{2} \wedge (\beta-{\varepsilon}) \right) t_n} + \left( \EE\|V^{\mathbf{0},k,\Delta}_{t_n}-x^{\mathbf{0}}_{t_n}\|_r^2\right)^{\frac{1}{2}}.
\end{aligned}
\end{equation}
For any $\delta>0$, we first choose $n$ sufficiently large such that,
$$
C \left(1 + \pi(\|\cdot\|_r^2) \right) e^{-\left( \frac{\lambda-{\varepsilon}}{2} \wedge (\beta-{\varepsilon}) \right) t_n} \le \frac{\delta}{2}.
$$
According to Theorem \ref{th3.4}, we choose sufficiently large $k$ and sufficiently small $\Delta \in (0,\Delta_1\wedge\Delta_2]$ such that
$$
\left( \EE\|V^{\mathbf{0},k,\Delta}_{t_n}-x^{\mathbf{0}}_{t_n}\|_r^2\right)^{\frac{1}{2}} \le \frac{\delta}{2}.
$$
Combining the above two inequalities with \eqref{W-k-kD} yields the desired assertion.
\end{proof}
$\hfill\square$

\subsection{Convergence rate of numerical IPM}

This subsection is devoted to the quantitative approximation of IPMs. Under slightly stronger conditions, we establish the convergence rate of the numerical IPM to the exact one. The resulting estimate further yields quantitative long-time sampling error bounds for the TEM scheme. Recalling Corollary \ref{rmk3.19}, let $P_t^{\xi,\Delta}$ be the transition semigroup corresponding to $V_t^{\xi,\Delta}$ and let $\pi^\Delta$ be the IPM of $V_t^{\xi,\Delta}$.

\begin{theorem}\label{IPM-rate}
Assume that Assumptions \ref{a3.1}, \ref{a5.8}, \ref{a3.3}, \ref{dis}, \ref{g} hold with $\mu_2 \in \CP_{2r}$,  $2b_1 > 2b_2 + (v \vee 1)b_3$ and $\nu_1,~\nu_2 \in \CP_{\bar c r}$ for some $\bar c > 2$. Then there exist $C, \bar C > 0$, such that 
$\pi^{\Delta}$ and $\pi$ satisfy
$$
      \WW_2(\pi^{\Delta}, \pi) \le C \Delta^{\rho_\varepsilon}, \quad \forall \varepsilon \in \left(0, (1/2)\wedge\lambda\wedge\beta \right).
$$
where $$\rho_{\varepsilon} = \frac{\left(\frac{\lambda-{\varepsilon}}{2} \wedge \left(\beta-{\varepsilon}\right) \right)(1-2\varepsilon)}{2 \left(\left(\frac{\lambda-{\varepsilon}}{2} \wedge \left(\beta-{\varepsilon}\right) \right) + \bar C \right)},
$$
$\beta$, $\lambda$ are defined in Lemma \ref{l4.5},  \ref{l-att}  respectively. Moreover,
\begin{equation}\label{LSE}
\WW_2(\mu P^\Delta_{t_n}, \pi) \le C(1+\mu(\|\cdot\|_r^2)) e^{-\lambda_\varepsilon t_n} + C \Delta^{\rho_\varepsilon},
\end{equation}
where $\lambda_\varepsilon = (\lambda-\varepsilon)/2$.
\end{theorem}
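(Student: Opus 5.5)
The plan is to run the triangle-inequality argument from the proof of Theorem \ref{IPM-conv}, but now with quantitative finite-time control from Corollary \ref{rmk3.19}. The memory length $k=k_\Delta$ is fixed as in \eqref{k_De}. For any $n\ge0$ I would write
$$
\WW_2(\pi^{\Delta},\pi)\le \WW_2(\pi^{\Delta},\delta_{\mathbf 0}P^{\Delta}_{t_n})+\WW_2(\delta_{\mathbf 0}P^{\Delta}_{t_n},\delta_{\mathbf 0}P_{t_n})+\WW_2(\delta_{\mathbf 0}P_{t_n},\pi).
$$
The first term is bounded by Theorem \ref{th5.4}, whose constant is uniform in $k$ and $\Delta$, giving $C e^{-\frac{\lambda-\varepsilon}{2}t_n}$. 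The third term is bounded by Theorem \ref{th4.6} with $p=2$, giving $C(1+\pi(\|\cdot\|_r^2))e^{-(\beta-\varepsilon)t_n}$. Here the hypothesis $2b_1>2b_2+(v\vee1)b_3\ge 2b_2+b_3$ guarantees the assumptions of Lemmas \ref{l4.4}, \ref{l4.5}, \ref{l-bou} and \ref{l-att} with $p=2$. The integrability $\nu_1,\nu_2\in\CP_{\bar cr}$, $\bar c>2$, is what lets Theorem \ref{th3.6} (with $c_1=\bar c$, and $a_4=0$ under Assumption \ref{dis}) produce the exponential-in-$k$ truncation error. These two contractive terms together give $C e^{-\kappa_\varepsilon t_n}$ with $\kappa_\varepsilon:=\frac{\lambda-\varepsilon}{2}\wedge(\beta-\varepsilon)$.

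The middle term is bounded by the coupling $(V^{\mathbf 0,\Delta}_{t_n},x^{\mathbf 0}_{t_n})$ driven by the same Brownian motion. Corollary \ref{rmk3.19} with $q=2$ and $T=t_n$ gives
$$
\WW_2(\delta_{\mathbf 0}P^\Delta_{t_n},\delta_{\mathbf 0}P_{t_n})\le \big(C_{t_n,\mathbf 0}\Delta^{1-\varepsilon'}\big)^{1/2}.
$$
The zero initial segment is trivially Hölder, so Assumption \ref{a5.8} holds, and Assumption \ref{a3.3} supplies the polynomial-growth Lipschitz bound. The key point, which I expect to be the \emph{main obstacle}, is the growth of $C_{T,\xi}$ in $T$. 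When $a_2=0$ we have $C_{T,\xi}=C_\xi e^{CT}$, and I must check that this is the form that applies here. Under Assumption \ref{dis} the drift satisfies the one-sided bound \eqref{mon-f} with no superlinear $a_2$ term, so all moment bounds and the Gronwall steps in Lemma \ref{l3.13} and Theorems \ref{th3.6}, \ref{th3.16} produce $e^{\bar C T}$ and not $e^{CT^{q/2}}$. I would verify explicitly that the constants, including those in the stopping-time estimates of Lemma \ref{l3.5}, grow at most like $e^{2\bar C T}$ with $\bar C$ independent of $\Delta$. The $(v\vee1)$ factor in the dissipativity condition should be exactly what is needed for the moments of order $q(1+v)$ used in \eqref{E-f-f} to be bounded uniformly in time, or at least with exponential growth. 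With that in hand the middle term is at most $C e^{\bar C t_n}\Delta^{\frac12-\varepsilon}$.

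Combining the three terms gives, for every $n$,
$$
\WW_2(\pi^\Delta,\pi)\le C e^{-\kappa_\varepsilon t_n}+Ce^{\bar C t_n}\Delta^{\frac{1-2\varepsilon}{2}}.
$$
I then balance the two terms by choosing $t_n\approx \frac{(1-2\varepsilon)}{2(\kappa_\varepsilon+\bar C)}\ln(1/\Delta)$, rounded to a grid point; the rounding costs only a constant factor $e^{(\kappa_\varepsilon+\bar C)\Delta}$. Both terms then become $C\Delta^{\rho_\varepsilon}$ with $\rho_\varepsilon=\frac{\kappa_\varepsilon(1-2\varepsilon)}{2(\kappa_\varepsilon+\bar C)}$, which is exactly the stated rate. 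Finally, \eqref{LSE} follows from $\WW_2(\mu P^\Delta_{t_n},\pi)\le \WW_2(\mu P^\Delta_{t_n},\pi^\Delta)+\WW_2(\pi^\Delta,\pi)$: the first term is controlled by Theorem \ref{th5.4} (applied with $k=k_\Delta$, uniform in $k$), giving $C(1+\mu(\|\cdot\|_r^2))e^{-\lambda_\varepsilon t_n}$, and the second by the rate just proved.
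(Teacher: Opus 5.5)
Your proposal is correct and follows the paper's proof essentially step for step. Both use the same three-term triangle inequality through $\delta_{\mathbf 0}P^\Delta_{t}$ and $\delta_{\mathbf 0}P_{t}$, Theorems \ref{th5.4} and \ref{th4.6} for the two contractive terms, Corollary \ref{rmk3.19} for the synchronous-coupling term with a constant $C_2e^{\bar C t}$, the balancing time $t\approx\rho_\varepsilon\ln(1/\Delta)/\bar\alpha_\varepsilon$, and a final triangle inequality with Theorem \ref{th5.4} for \eqref{LSE}. Your check that Assumption \ref{dis} forces $a_2=a_4=0$, so the finite-time constants grow only like $e^{\bar C T}$, is exactly what the paper uses implicitly when it writes $C_2e^{\bar C t}$; your speculation about the role of $(v\vee 1)$ is not needed for this step.
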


\begin{proof}
Similar to Theorem \ref{IPM-conv}, there exists a positive constant $C_1$ such that, for any $\varepsilon \in (0, (1/2)\wedge\lambda\wedge\beta)$ and $t \ge 1$,
\begin{equation*}
\begin{aligned}
    \WW_2(\pi^{\Delta}, \pi) \le   C_1  e^{-\bar\alpha_{\varepsilon} t} + \left( \EE\|V^{\mathbf{0},\Delta}_{t}-x^{\mathbf{0}}_{t}\|_r^2\right)^{\frac{1}{2}},
\end{aligned}
\end{equation*}
where $\bar\alpha_{\varepsilon} := \frac{\lambda-{\varepsilon}}{2} \wedge (\beta-{\varepsilon}) > 0.$
An application of Corollary \ref{rmk3.19} further yields that, there exist positive constant $C_2$ and $\bar C$ such that, for any $\varepsilon \in (0,(1/2)\wedge\lambda\wedge\beta)$
$$
\begin{aligned}
    \WW_2(\pi^{\Delta}, \pi) \le   C_1 e^{-\bar\alpha_{\varepsilon} t} + C_2 e^{\bar C t} \Delta^{\frac{1}{2}-\varepsilon}, \quad \forall t \ge 1.
\end{aligned}
$$
Set
$$
\rho_{\varepsilon} = \frac{\bar\alpha_\varepsilon(1-2\varepsilon)}{2(\bar\alpha_\varepsilon + \bar C)}, \quad T_\varepsilon = \frac{-\rho_{\varepsilon} \ln \Delta}{\bar\alpha_\varepsilon}, \quad \forall \varepsilon \in (0, (1/2)\wedge\lambda\wedge\beta).
$$
Then, for any $\varepsilon \in (0, (1/2)\wedge\lambda\wedge\beta)$, it follows that
$$
\begin{aligned}
    \WW_2(\pi^{\Delta}, \pi) \le C_1 e^{-\bar\alpha_\varepsilon T_\varepsilon} + C_2 e^{\bar C T_{\varepsilon}} \Delta^{\frac{1}{2}-\varepsilon}  \le  C \Delta^{\rho_\varepsilon}.
\end{aligned}
$$
The final result \eqref{LSE} follows from the above inequality, Theorem \ref{th5.4}, and the triangle inequality. This completes the proof.
\end{proof}
$\hfill\square$

The estimate \eqref{LSE} provides a quantitative long-time sampling error bound in the Wasserstein distance. For many applications, one is interested in approximating expectations of observables with respect to the target IPM rather than the distance between probability measures, such as in ergodic control problems. The following corollary shows that the convergence rate of numerical IPMs established above also yields quantitative error estimates for such quantities.

\begin{coro}\label{coro}
	Assume that the assumptions in Theorem \ref{IPM-rate} hold. Then, for any $F: \CC_r \to \RR$ satisfying 
	$$
	|F(\phi)| \le C_F(1 + \|\phi\|_r^2),
	$$
	where $C_F$ is a positive constant depending only on $F$, the IPMs $\pi^\Delta$ and $\pi$ satisfy
	$$
	\pi^\Delta(F) \to \pi(F), \quad \text{as } \Delta \to 0.
	$$
	Furthermore, if there exist $L_F$ such that $F$ satisfies
	\begin{equation}\label{eq4.43}
	|F(\phi)-F(\psi)| \le L_F (1 + \|\phi\|_r + \|\psi\|_r) \|\phi-\psi\|_r,
	\end{equation} 
	then 
	$$
	|\pi^{\Delta}(F)-\pi(F)| \le C \Delta^{\rho_\varepsilon},
	$$
	where $\rho_\varepsilon$ is defined in
	Theorem \ref{IPM-rate}
	and $C$ is a positive constant depending only on $F$.
\end{coro}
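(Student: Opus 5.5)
The argument uses optimal couplings together with the uniform second-moment bounds on the invariant measures. Let $\rho\in\Pi(\pi^\Delta,\pi)$ be an optimal coupling for $\WW_2(\pi^\Delta,\pi)$; it exists because $\pi^\Delta,\pi\in\CP_2(\CC_r)$ (Theorems \ref{th4.6} and \ref{th5.4}).

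We begin with the Lipschitz part. For $F$ satisfying \eqref{eq4.43}, write
$$
|\pi^\Delta(F)-\pi(F)|=\Big|\int (F(\phi)-F(\psi))\,\rho(\d\phi,\d\psi)\Big|\le L_F\int(1+\|\phi\|_r+\|\psi\|_r)\|\phi-\psi\|_r\,\rho(\d\phi,\d\psi).
$$
By Cauchy--Schwarz this is at most
$$
L_F\Big(\int(1+\|\phi\|_r+\|\psi\|_r)^2\d\rho\Big)^{1/2}\WW_2(\pi^\Delta,\pi)\le CL_F\big(1+\pi^\Delta(\|\cdot\|_r^2)+\pi(\|\cdot\|_r^2)\big)^{1/2}\WW_2(\pi^\Delta,\pi).
$$
The bound \eqref{uniform_bounded} gives $\sup_\Delta\pi^\Delta(\|\cdot\|_r^2)\le C$, uniformly in $\Delta\in(0,\Delta_1\wedge\Delta_2]$; here $k=k_\Delta$, so this is a special case of the supremum over $k$. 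We also have $\pi(\|\cdot\|_r^2)<\infty$. Combining these with Theorem \ref{IPM-rate} yields $|\pi^\Delta(F)-\pi(F)|\le C\Delta^{\rho_\varepsilon}$.

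For the first claim we assume only the growth condition on $F$, and we read $F$ as continuous, since that is implicitly needed. Theorem \ref{IPM-rate} gives $\WW_2(\pi^\Delta,\pi)\to0$. Convergence in $\WW_2$ is equivalent to weak convergence together with convergence of second moments, i.e. uniform integrability of $\|\cdot\|_r^2$ (Villani, Theorem 6.9). Given weak convergence and this uniform integrability, integrals of continuous functions of growth $C_F(1+\|\phi\|_r^2)$ converge.

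To make the uniform integrability step explicit, truncate. Set $F_M=(F\wedge M)\vee(-M)$. Then $\pi^\Delta(F_M)\to\pi(F_M)$ by weak convergence, and
$$
|F-F_M|\le C_F(1+\|\phi\|_r^2)\II_{\{C_F(1+\|\phi\|_r^2)>M\}}.
$$
The tail integrals of the right-hand side are uniformly small in $\Delta$ by the convergence of second moments. Letting $\Delta\to0$ and then $M\to\infty$ completes the argument.

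The main obstacle is the uniform integrability in the non-Lipschitz case. The bound \eqref{uniform_bounded} alone only bounds the second moments, so I would rely on $\WW_2$ convergence, which supplies the convergence of second moments directly. If continuity of $F$ is not assumed, the first claim is not justified, and I would state that assumption explicitly.
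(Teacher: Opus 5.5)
Your proposal is correct and follows the paper's proof. The H\"older-type part uses the same optimal coupling and Cauchy--Schwarz argument, combined with \eqref{uniform_bounded}, the finiteness of $\pi(\|\cdot\|_r^2)$ and Theorem~\ref{IPM-rate}; for the first claim the paper simply invokes Villani's Theorem~6.9, and your truncation argument unpacks exactly that step. Your insistence on continuity of $F$ is also justified. That theorem requires it, and without it the claim can fail: each $\pi^{\Delta}$ is carried by a finite-dimensional subspace of piecewise-linear segments, while with nondegenerate noise $\pi$ charges no piecewise-linear segment, so the indicator of the countable union of these subspaces is a bounded counterexample.
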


\begin{proof}
The first assertion follows directly from Theorem \ref{IPM-rate}
and \cite[Theorem 6.9]{V09}. 
Let $\nu$ be an optimal coupling of $\pi^\Delta$ and $\pi$. Then by \eqref{eq4.43}, the Cauchy--Schwarz inequality, Theorems \ref{th4.6} and \eqref{uniform_bounded}, one derives that  
$$
\begin{aligned}
\bigl|\pi^\Delta(F)-\pi(F)\bigr| 
\le &  \int_{\CC_r\times\CC_r} |F(\phi)-F(\psi)| \nu(\d\phi, \d\psi) \\
\le & \int_{\CC_r\times\CC_r} L_F \left( 1 + \|\phi\|_r + \|\psi\|_r \right) \|\phi-\psi\|_r \nu(\d\phi, \d\psi) \\
\le & C  \left( 1 +  \pi^{\Delta}(\|\cdot\|^2_r) + \pi(\|\cdot\|^2_r) \right)^{1/2} \WW_2(\pi^\Delta, \pi) \\
\le & C \WW_2(\pi^\Delta, \pi).
\end{aligned}
$$
The desired estimate follows immediately from
Theorem \ref{IPM-rate}.
\end{proof}

\begin{rmk}\label{rmk}
Combining Corollary \ref{coro} with the numerical ergodicity established above yields a practical sampling strategy for approximating expectations with respect to the target distribution. In particular, for any test functional $F:\CC_r\to\RR$ satisfying the assumptions of Corollary \ref{coro}, the expectations with respect to the target distribution $\pi(F)$ can be approximated through a single numerical trajectory generated by the TEM scheme. More precisely, by numerical ergodicity,
$$
\pi^\Delta(F)\approx \frac{1}{N}\sum_{n=0}^{N-1}F(V^{\pi^\Delta,\Delta}_{t_n}),
$$
where $\{V^{\pi^\Delta,\Delta}_{t_n}\}_{n\ge0}$ denotes the numerical segment process with initial distribution $\pi^\Delta$; see \cite[Remark 11.6]{DZ14}. Together with Corollary \ref{coro}, this provides an approximation of the target expectation $\pi(F)$.

This observation is particularly relevant for applications involving ergodic control and long-time statistical computation. The numerical ergodicity established in this work enables expectations with respect to the target distribution to be approximated by time averages along a single numerical trajectory. Consequently, repeated simulations of independent sample paths are not required, leading to a computationally efficient sampling procedure. Furthermore, Corollary \ref{coro} and \eqref{LSE} provide quantitative error estimates for approximating the target expectation $\pi(F)$ and for long-time sampling in the Wasserstein distance, respectively.

\end{rmk}

\section{Numerical experiments}\label{S5}
This section gives two examples to illustrate the results established in previous sections.
\begin{expl}
Consider an SFDE with infinite delay as follows
\begin{equation}\label{eg7.1}
   \d x(t) = \left( d_1 -  d_2 x(t) - d_3 x^3(t) + d_4 M(x_t)  \right) \d t + d_5 M(x_t) \d B(t),
\end{equation}
with three initial data $\xi_1(u) = e^{0.2u}, \xi_2(u) = -e^{0.2u}$ and $\xi_3(u)=u$. Here $M(\phi)=\int_{-\infty}^0 \phi(u) \tilde\mu(\d u)$, $\tilde\mu(\d u) = 3e^{3u}\d u$, $d_1$ is a real number and $d_2-d_5$ are positive constants satisfying $d_2 > d_4 + d_5$. It is easy to verify that $\xi_1, \xi_2, \xi_3 \in \CC_{0.3}$ and Assumptions \ref{a3.3} and \ref{dis} hold with 
$$a_8 = d_2 + \frac{3d_3}{2} + 2, \  v =2, \ b_1 = d_2 - \frac{d_4}{2}, $$
$$
 b_2 = \frac{d_4}{2}, \ b_3 = d_5, \ \nu_1 = \nu_2 = \tilde\mu.
$$
It follows from $d_2 > d_4 + d_5$ that $b_1 > b_2 + b_3$. By Remark \ref{rmk3.12}, choose
$$
\theta = \frac{2}{5}\ \hbox{ and }\  \Lambda^{-1}(R) = \bigg( \frac{R}{36}-\frac{1}{2}\bigg)^{\frac{1}{2}}, \quad \forall R \ge 13.
$$
We denote the numerical segment process by $X^{\Delta}_t$.
Hence, it follows from Corollary \ref{rmk3.19} that the numerical segment process $X^\Delta_t$ converges to the exact one with a rate that sufficiently close to $1/2$. Furthermore, by Theorems \ref{th4.6} and \ref{th5.4}, the exact segment process $x_t$ and the TEM numerical segment process $X^\Delta_t$ are ergodic. Denote by $\pi$ and $\pi^\Delta$ the IPMs of the exact segment process
and the TEM numerical segment process, respectively. Theorem \ref{IPM-rate} implies that $\pi^\Delta$ converges to the exact IPM $\pi$ with rate $\rho_{\varepsilon}$, where $\rho_{\varepsilon}$ is defined in Theorem \ref{IPM-rate}. In this example, we choose $d_1 = d_5 = 1$, $d_2 = 8$, $d_3 = 2$ and $d_4 = 6$.

\paragraph{Experiment $1$: Finite-time strong convergence.}
To verify the efficiency of the numerical scheme, we carry out numerical experiments using MATLAB.  Since \eqref{eg7.1} cannot be solved explicitly, we regard the TEM numerical segment process with $k_{ref}=200$ and $\Delta_{ref}=2^{-7}$ as the exact segment process of \eqref{eg7.1}. Choose $k = 12$ and $T = 10$, Figure \ref{5-tu1} depicts the root mean square approximation error $\left( \EE \|x^{k,\Delta}_{10} - x_{10}\|_r^2 \right)^{1/2}$ as a function of step size $\Delta \in \{2^{-3}, 2^{-4}, 2^{-5}, 2^{-6}\}$. As illustrated in Figure \ref{5-tu1}, the convergence rate is sufficiently close to $1/2$. 
\begin{figure}[htp]
	\centering
	\includegraphics[width=15cm]{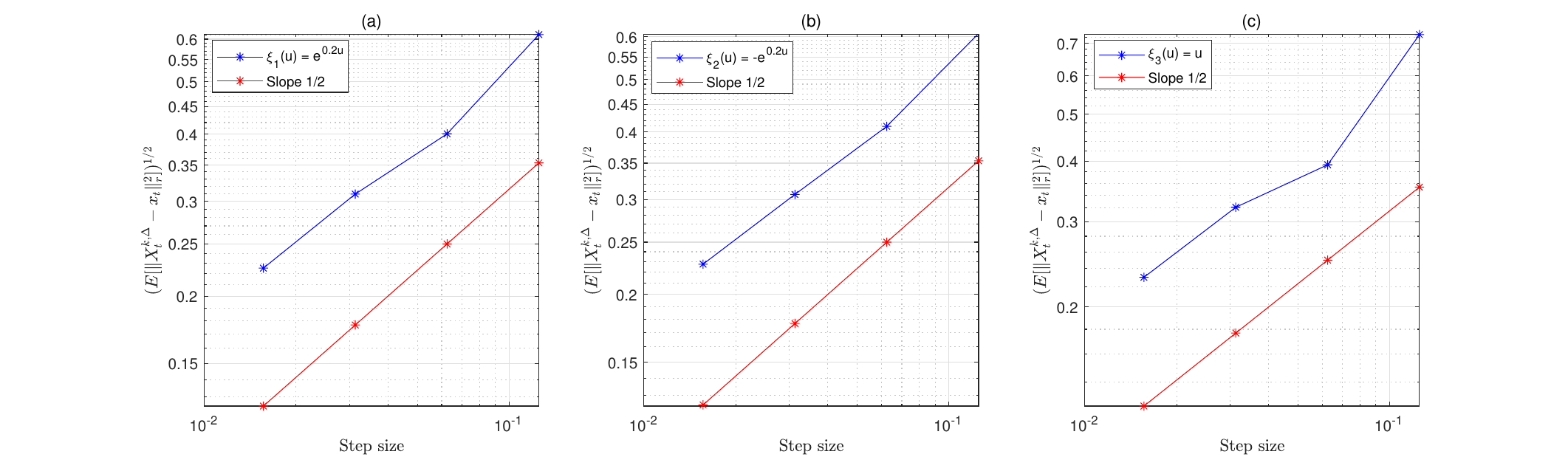}
	\caption{(a) $\xi_1(u) = e^{0.2u}$. (b) $\xi_2(u) = - e^{0.2u}$. (c) $\xi_3(u) = u$.}\label{5-tu1}
\end{figure}

\paragraph{Experiment 2: Numerical ergodicity.}
We then illustrate the numerical ergodicity of the proposed TEM scheme. 
We take $k=23$, $\Delta=2^{-4}$, and simulate the numerical solutions 
on the time interval $[0,20]$. The Monte Carlo sample size is $S=2000$. 
Three different initial segments $\xi_i$, $i=1,2,3$, are considered. 
For the two test functionals
$$
F_1(\varphi)=\cos(\|\varphi\|_r),\quad
F_2(\varphi)=\|\varphi\|_r\wedge 2,
$$
we compute the sample means
$$
\frac1S\sum_{m=1}^{S}F_i(X_{t}^{k,\Delta,m}),\qquad i=1,2.
$$

Figure \ref{5-tu2} shows the sample means of 
$\cos(\|X_t^{k,\Delta}\|_r)$ and 
$\|X_t^{k,\Delta}\|_r\wedge 2$ for the three different initial segments. 
The curves starting from different initial data approach the same limiting 
level as time increases. This numerically illustrates the ergodic behavior 
of the TEM scheme and is consistent with the existence and uniqueness of the 
numerical invariant probability measure.
\begin{figure}[htp]
	\includegraphics[width=14cm]{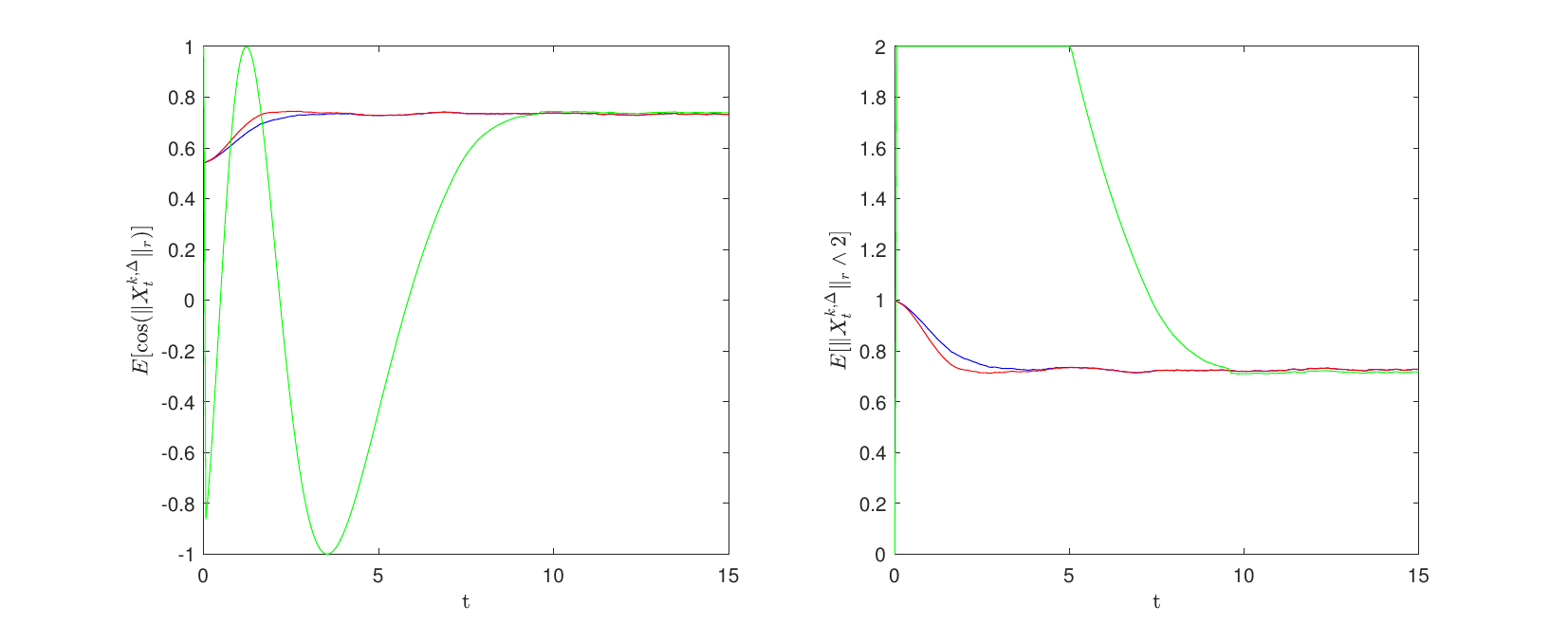}
	\caption{(a)  $\EE [cos(\|\cdot\|_r)]$. (b) $\EE[\|\cdot\|_r \wedge 2]$.}\label{5-tu2}
\end{figure}

\paragraph{Experiment 3: Convergence of numerical invariant statistics.}
Since the exact invariant measure is not explicitly available, 
we use a highly resolved long-time simulation as a reference 
approximation of the invariant statistics. More precisely, the 
reference values are computed by the empirical time averages 
generated by the full-memory exponential-kernel recursion with 
$\Delta_{\rm ref}=2^{-10}$, $T_{\rm ref}=600$, burn-in time 
$T_{0,\rm ref}=150$, and $S_{\rm ref}=50$ independent sample paths.
For the finite-memory TEM scheme, we take 
$\Delta=\{2^{-4},2^{-5},2^{-6},2^{-7},2^{-8}\}$, $k=50$, 
$T=500$, $T_0=100$, and $S=50$ independent sample paths. 
Let $N=\lfloor T/\Delta \rfloor$ and $N_0=\lfloor T_0/\Delta \rfloor$.
For each test functional $F_i$, we compute
$$
\widehat\pi^{k,\Delta}(F_i)
=
\frac1S\sum_{m=1}^S
\frac{1}{N-N_0}
\sum_{n=N_0}^{N-1}
F_i(X_{t_n}^{\xi_1,k,\Delta,m}),
$$
and report the error
$$
{\rm Err}_{F_i}(\Delta)
=
\left|
\widehat\pi^{k,\Delta}(F_i)
-
\widehat\pi_{\rm ref}(F_i)
\right|.
$$
We consider the following three test functionals:
$$
F_1(\varphi)=\cos(\|\varphi\|_r),\quad
F_2(\varphi)=\|\varphi\|_r\wedge 2,\quad
F_3(\varphi)=\|\varphi\|_r^2.
$$

Figure \ref{5-tu8} displays the errors ${\rm Err}_{F_i}(\Delta)$ in a 
log-log scale. The dashed lines represent least-squares fitted 
reference slopes.
The errors decrease as the stepsize $\Delta$ becomes smaller, 
which provides numerical evidence for the convergence of the 
numerical invariant statistics. This is consistent with Theorem \ref{IPM-rate} and Corollary \ref{coro}.
 
\begin{figure}[htp]
	\includegraphics[width=16cm]{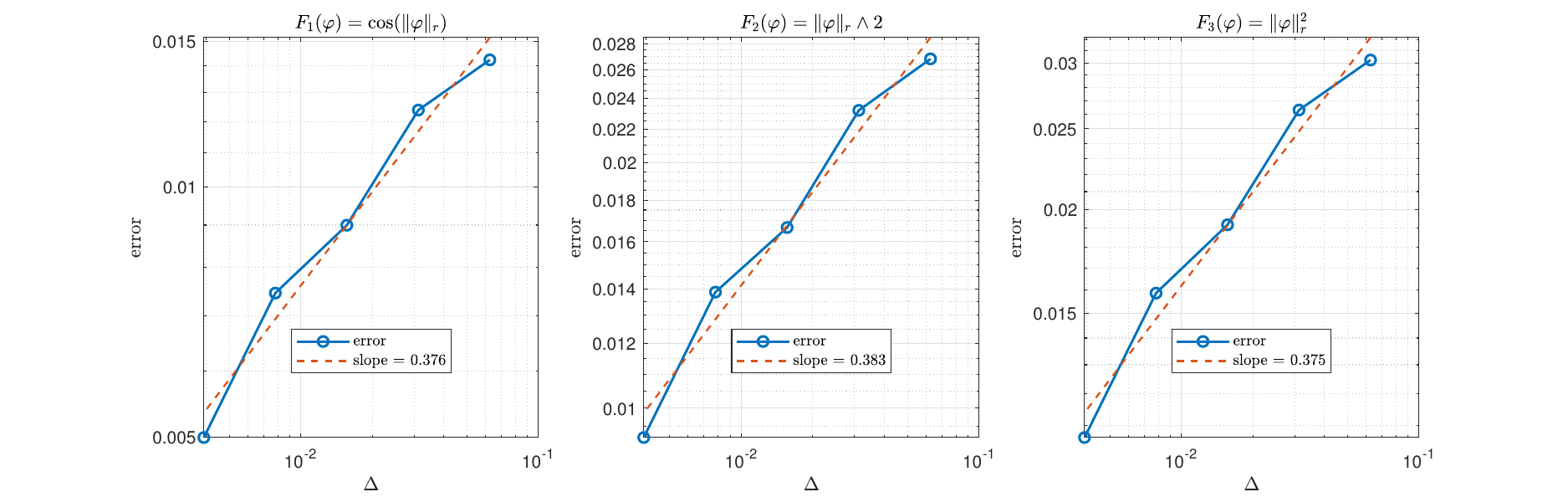}
	\caption{Convergence of numerical invariant statistics for Equation \eqref{eg7.1}.}\label{5-tu8}
\end{figure}

\paragraph{Experiment $4$: Storage efficiency of the finite-memory implementation.}
We finally illustrate the storage efficiency of the proposed finite-memory TEM scheme in long-time simulations. We compare two implementations of the same finite-memory TEM scheme. The first one is the finite-memory implementation, which only stores the most recent $k/\Delta+1$ historical nodes. 
The second one is a full-history implementation, which stores all historical nodes from the initial time to the terminal time $T$. Both implementations use the same numerical scheme and the same Brownian sample paths; hence they produce essentially the same numerical statistics. The difference lies only in the amount of historical data stored during the simulation.

In this experiment, we take $\Delta=2^{-7}$, $k=20$, and consider 
$T=\{50,100,200,400\}$. The burn-in time is $T_0=20$, and $S=20$
independent sample paths are used. For comparison, we also compute the 
long-time statistic
$$
\frac1S\sum_{m=1}^S
\frac{1}{N-N_0}
\sum_{n=N_0}^{N-1}|X_{t_n}^{k,\Delta,m}|^2,
$$
where $N=T/\Delta$ and $N_0=T_0/\Delta$.

Figure \ref{5-tu9} shows the number of stored historical nodes and 
the storage ratio between the full-history implementation and the 
finite-memory implementation. The finite-memory implementation stores only 
$k/\Delta+1$ nodes, which is independent of the terminal time $T$. In 
contrast, the full-history implementation stores $T/\Delta+1$ nodes, which increases linearly with $T$.
\begin{figure}[htp]
	\includegraphics[width=14cm]{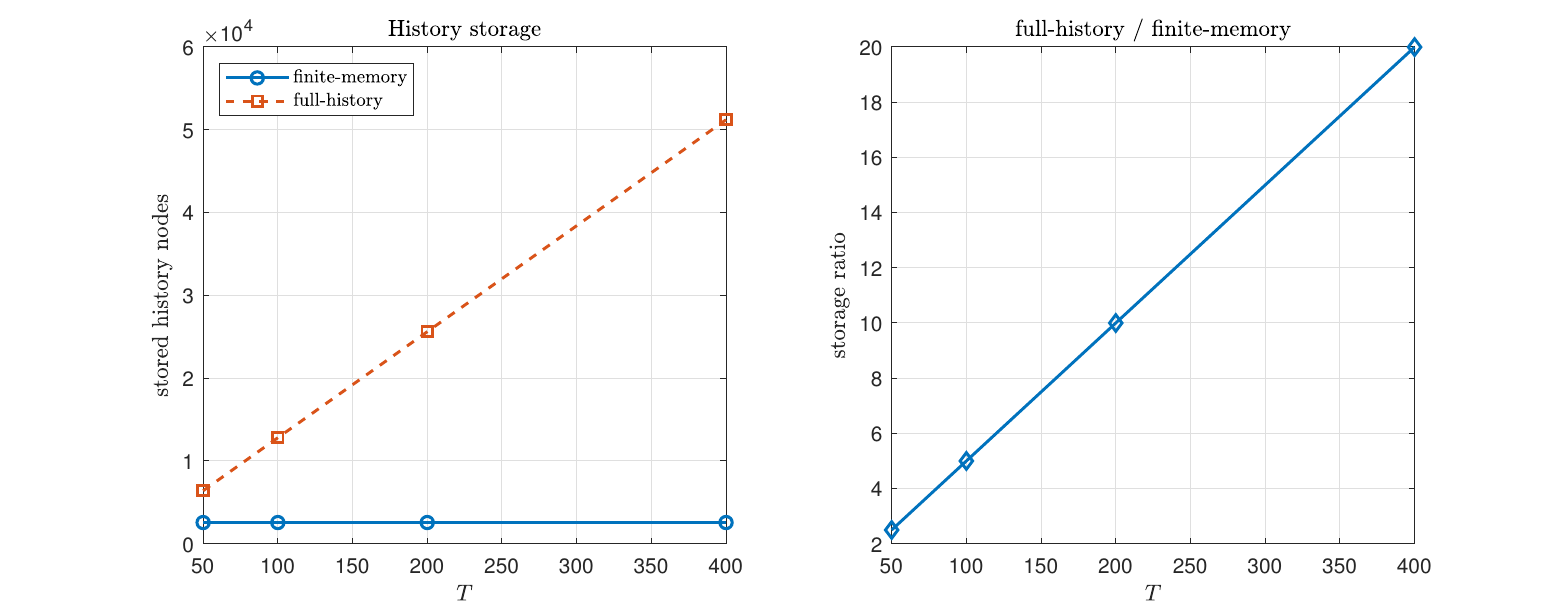}
	\caption{Storage efficiency of the finite-memory TEM scheme.}\label{5-tu9}
\end{figure}

The results demonstrate that the proposed finite-memory implementation has 
a bounded storage requirement for long-time simulations, while the storage 
cost of the full-history implementation grows linearly with the simulation 
time. This is particularly important for sampling invariant probability 
measures, where long trajectories are required to approximate long-time 
statistics. Therefore, the finite-memory structure makes the TEM scheme more 
suitable for long-time simulation and invariant-measure sampling of 
SFDEs with infinite delay.

\end{expl}

\begin{expl}
This example considers a special case of the Lotka–Volterra model introduced in \cite{YWK22}, 
with specific choices of the coefficients, 
to demonstrate the effectiveness of the TEM numerical scheme developed in this paper. Consider $2$-dimensional Lotka-Volterra model of the form
\begin{equation}\label{eg5-1}
\begin{aligned}
\d x(t) = \diag(x(t)) \left[ r + A x(t) + B \int_{-\infty}^0 x(t+u) \mu{(\d u)} \right] \d t + \diag(x(t)) \Sigma \d B(t),
\end{aligned}
\end{equation}
where $\diag(x(t)) = \diag(x_1(t), x_2(t))$, $r=(0.8, 0.6)^{\rm T}$ and
$$
A = \begin{pmatrix}
-1 & -0.05 \\
-0.05 & -1
\end{pmatrix}, \quad  B = \begin{pmatrix}
-0.01 & -0.02 \\
-0.03 & -0.015
\end{pmatrix}, \quad \Sigma=\begin{pmatrix}
0.05 & 0 \\
0 & 0.1
\end{pmatrix}.
$$
Choose $r=0.3$ and $\mu(\d u) = 3e^{3u}\d u$.
It follows from \cite[Proposition $2.1$]{YWK22} that, for initial data
$\xi_1 = (0.3 e^{0.2 u}, 0.8 e^{-0.1u})^{\rm T}$, $ \xi_2 = (0.5 e^{-0.1u}, 0.6 e^{0.2 u})^{\rm T}$, $\xi_3 = (0.2e^{0.2u}, 0.3(u^2 + 1)e^{0.1u})^{\rm T}$, equation \eqref{eg5-1} admits unique global positive solutions
$x^{\xi_1}(t)$, $x^{\xi_2}(t)$ and $x^{\xi}_3(t)$ on $t \in (-\infty, +\infty)$ respectively. Moreover, it follows from \cite[Theorem 4.1]{YWK22} that the solution process of \eqref{eg5-1} admits a unique stationary distribution. It is straightforward to verify that the equation satisfies the assumptions of Theorem \ref{th3.4}. 
According to Assumption \ref{a2.1-f}, choose 
$$\Lambda(R) = 1+9R, \quad \forall R \ge 0$$
and $L=1$, $\theta = 1/3$. Choose $k = 50$, $T = 15$ and $\Delta = 2^{-9}$. Figure \ref{5-tu3} depicts the sample means of $\cos\left(\|X^{k, \Delta}_t\|_r\right)$ and $\|X^{k,\Delta}_t\|_r \wedge 2$ with different initial $\xi_i$ $(i = 1, 2, 3)$ in the time interval $[0, 15]$ for $2000$ sample points. 
\begin{figure}[htp]
	\includegraphics[width=14cm]{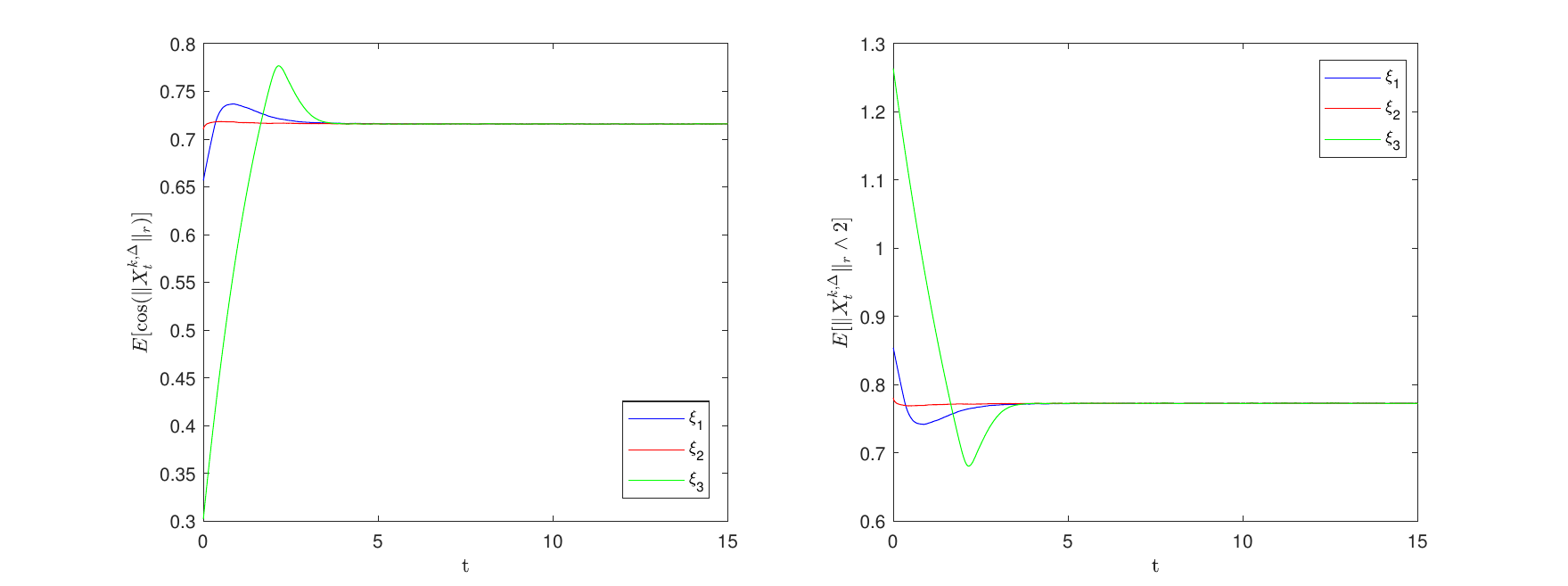}
	\caption{(a)  $\EE[cos(\|\cdot\|_r)]$. (b) $\EE[\|\cdot\|_r \wedge 2$].}\label{5-tu3}
\end{figure}
Figure \ref{5-tu3} shows that the expectations of the test functional, computed from numerical solutions starting from different initial values, converge toward a common limiting value. This provides numerical evidence supporting the existence and uniqueness of the IPM for the numerical segment process.

From a sampling perspective, expectations with respect to the numerical IPM are often approximated by long-time averages along a single numerical trajectory. This motivates the following experiment. We 
choose $k = 50$, $T = 300$ and $\Delta = 2^{-9}$. 
\begin{figure}[htp]
	\includegraphics[width=16cm]{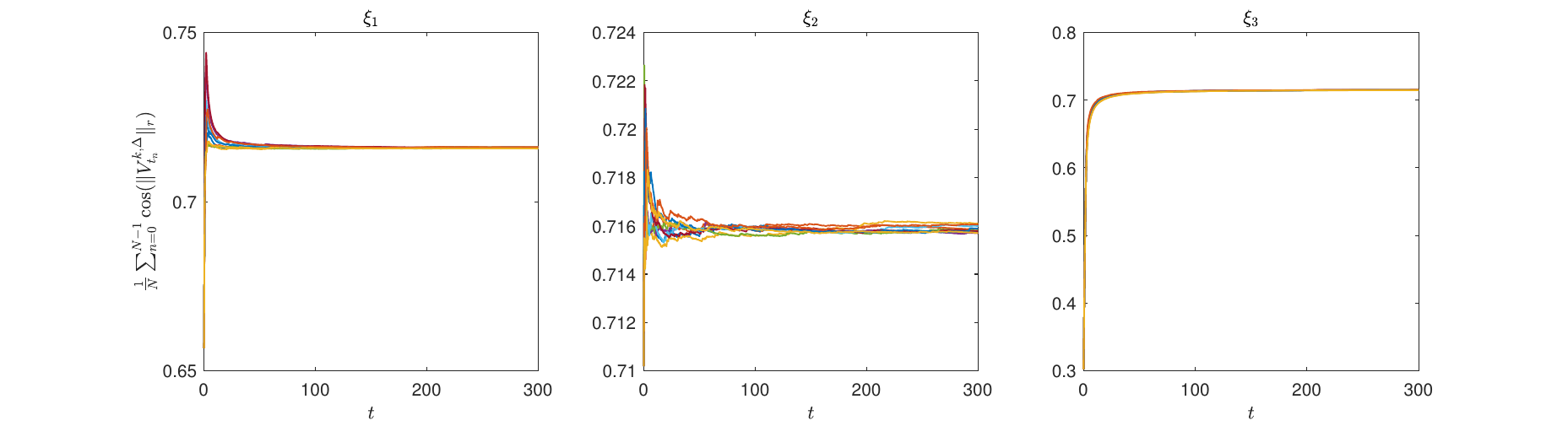}
	\caption{Time-average convergence of $\cos(\|X^{k,\Delta}_{t_n}\|_r)$ along $10$ sample paths for three different initial data.}\label{5-tu4}
\end{figure}
\begin{figure}[htp]
	\includegraphics[width=16cm]{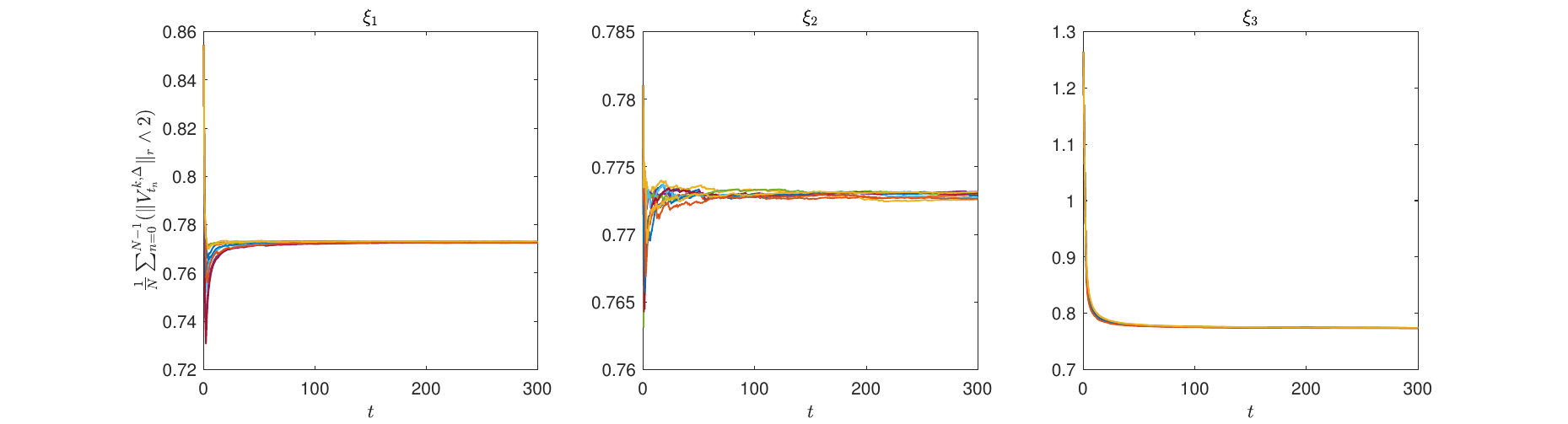}
	\caption{Time-average convergence of $\|X^{k,\Delta}_{t_n}\|_r\wedge 2$ along $10$ sample paths for three different initial data.}\label{5-tu5}
\end{figure}
\begin{figure}[htp]
	\includegraphics[width=16cm]{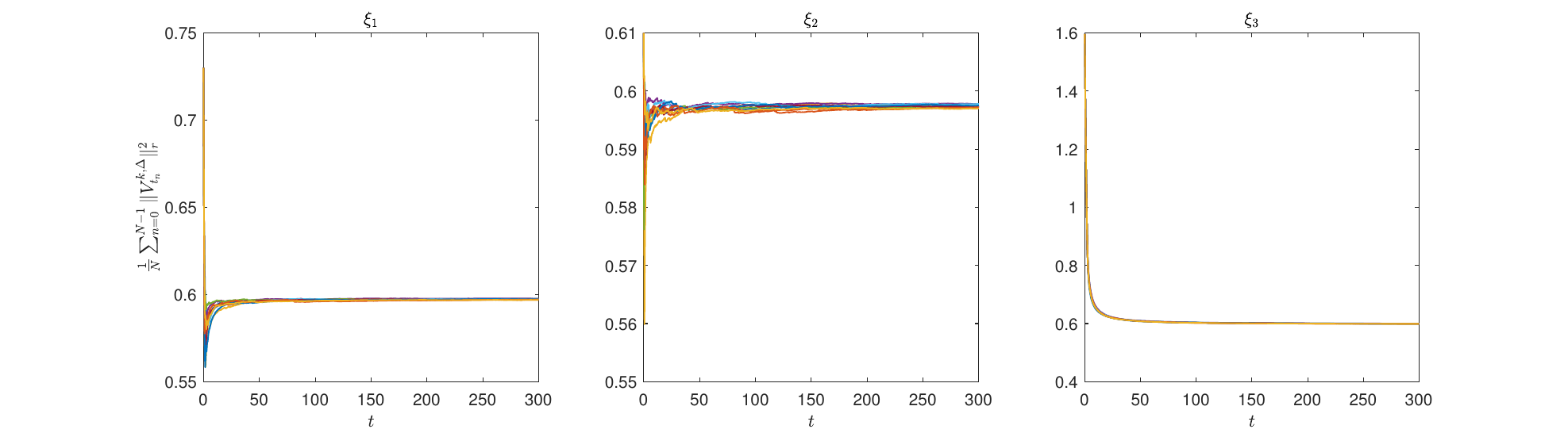}
	\caption{Time-average convergence of $\|X^{k,\Delta}_{t_n}\|^2_r$ along $10$ sample paths for three different initial data.}\label{5-tu6}
\end{figure}
\begin{figure}[htp]
	\includegraphics[width=16cm]{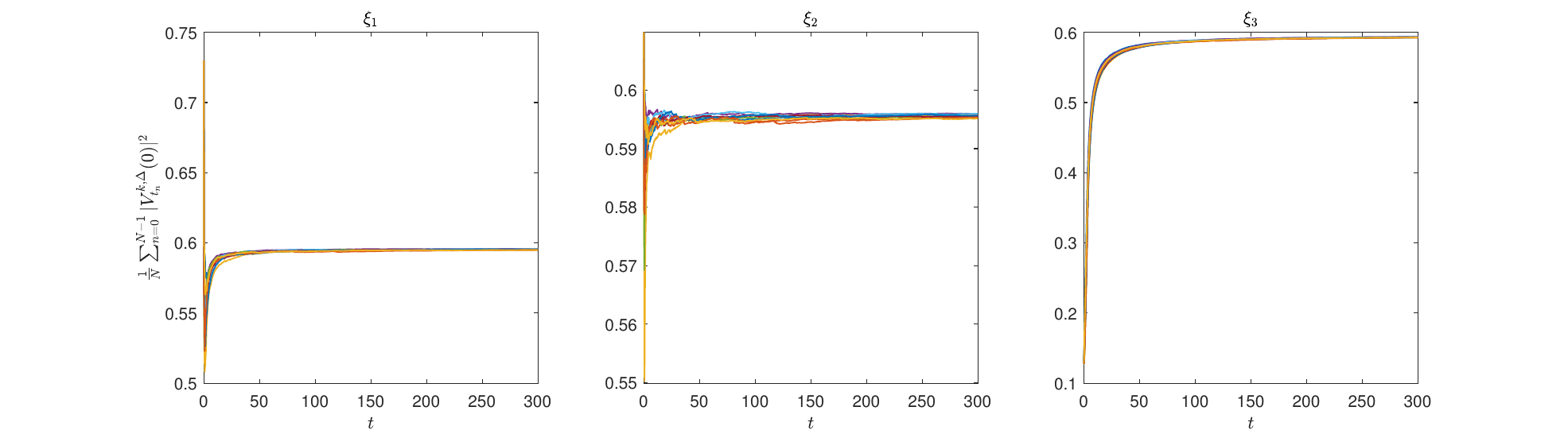}
	\caption{Time-average convergence of $|X^{k,\Delta}(t_n)|^2$ along $10$ sample paths for three different initial data.}\label{5-tu7}
\end{figure}
Figures \ref{5-tu4}-\ref{5-tu7} display the time averages of the test functions evaluated along the numerical segment processes for $10$ different sample paths starting from each of the three initial values. For a fixed initial segment, the time averages corresponding to different realizations appear to converge to the same limiting value. Moreover, similar limiting values are observed for all three initial values, indicating that the long-time averages become asymptotically independent of the initial segment. Comparing Figures \ref{5-tu4} and \ref{5-tu5} with Figure \ref{5-tu3}, we observe that the limiting values obtained from the time averages are in good agreement with those obtained from the time averages along a single numerical trajectory of numerical segment process. This agreement illustrates the numerical ergodicity of the TEM scheme and suggests that expectations with respect to the numerical IPM can be effectively approximated by time averages along a single numerical trajectory.

Therefore, the numerical ergodicity established in this work not only ensures the approximation of IPMs, but also provides a theoretical foundation for efficient long-time sampling. In particular, expectations of test functionals with respect to the IPMs can be approximated by time averages along one sufficiently long numerical trajectory, which substantially reduces the computational cost compared with ensemble averaging over many trajectories.

\end{expl}

\end{document}